\newtheorem{theorem}[equation]{Theorem}
\newtheorem*{theorem*}{Theorem}
\newtheorem{corollary}[equation]{Corollary}
\newtheorem*{corollary*}{Corollary}
\newtheorem{lemma}[equation]{Lemma}
\newtheorem{proposition}[equation]{Proposition}
\theoremstyle{definition}
\newtheorem{definition}[equation]{Definition}
\theoremstyle{remark}
\newtheorem{remark}[equation]{Remark}
\newcommand{\C}[1]{\mathscr{#1}}
\def\op{\mathrm{op}}
\def\env{\mathrm{e}}
\newcommand{\id}[1]{\mathrm{id}_{#1}}
\newcommand{\modules}[1]{\mathrm{Mod}(#1)}
\newcommand{\bimp}[1]{\operatorname{Mod}_p({#1}^\env)}
\newcommand{\bimpst}[1]{\operatorname{\underline{Mod}}_p({#1}^\env)}
\newcommand{\modulesfp}[1]{\mathrm{mod}(#1)}
\newcommand{\proj}[1]{\mathrm{proj}(#1)}
\newcommand{\modulesfpst}[1]{\mathrm{\underline{mod}}(#1)}
\newcommand{\modulesst}[1]{\mathrm{\underline{Mod}}(#1)}
\def\chain{\operatorname{Chain}}
\def\hom{\operatorname{Hom}}
\def\homst{\underline{\operatorname{Hom}}}
\def\rmap{\operatorname{Map}}
\def\gsquare{\operatorname{Sq}}
\def\ext{\operatorname{Ext}}
\def\exttate{\underline{\operatorname{Ext}}}
\def\aut{\operatorname{Aut}}
\def\characteristic{\operatorname{char}}
\def\pic{\operatorname{Pic}}
\def\morita{\operatorname{DGCat}^{\operatorname{Mor}}}
\def\dgalgebras{\operatorname{DGAlg}}
\def\equivalences{\operatorname{DGCat}^{\operatorname{Eq}}}
\newcommand{\etc}[1]{\operatorname{ETC}(#1)}
\newcommand{\ets}[1]{\operatorname{ETS}(#1)}
\def\To{\longrightarrow}
\renewcommand{\ker}{\operatorname{Ker}}
\newcommand{\im}{\operatorname{Im}}
\newcommand{\hc}[2]{\operatorname{C}^{#1}(#2)}
\newcommand{\hz}[2]{\operatorname{Z}^{#1}(#2)}
\newcommand{\hh}[2]{\operatorname{HH}^{#1}(#2)}
\newcommand{\htate}[2]{\underline{\operatorname{HH}}^{#1}(#2)}
\newcommand{\abs}[1]{|#1|}
\def\aut{\operatorname{Aut}}
\def\out{\operatorname{Out}}
\newcounter{rpage}
\newcommand{\rpage}{\value{rpage}} 				% La pgina de la sucesin espectral que se va a dibujar
\newcommand{\rescale}{0.3}			% El tamao de los cuadritos de la cuadrcula
\newcommand{\margen}{0.1}			% El margen interior de la cuadrcula, un nmero muy pequeo
\newcommand{\abajo}{4}				% Cuntos cuadritos se dibujar por abajo del eje horizontal
\newcommand{\arriba}{3}
\newcommand{\derecha}{5} 			% Cuadrados a la derecha de 3r-3, derecha > arriba para que quede bien
\newcounter{trunco}
\newcommand{\trunco}{\value{trunco}}    	% Para la sucesin espectral truncada
\begin{document}

\title{Enhanced finite triangulated categories}%
\author{Fernando Muro}%
\address{Universidad de Sevilla,
Facultad de Matem\'aticas,
Departamento de \'Algebra,
Avda. Reina Mercedes s/n,
41012 Sevilla, Spain}
\email{fmuro@us.es}
\urladdr{https://personal.us.es/fmuro}

\thanks{The author was partially supported by
the Spanish Ministry of Economy under the grant MTM2016-76453-C2-1-P (AEI/FEDER, UE) and by the Andalusian Ministry of Economy and Knowledge and the Operational Program FEDER 2014--2020 under the grant US-1263032}
\subjclass[2010]{18E30, 16E40, 18G40, 55S35}
\keywords{Triangulated category, $A$-infinity category, Hochschild cohomology, spectral sequence, obstruction theory.}

\begin{abstract}
	We give a necessary and sufficient condition for the existence of an enhancement of a finite triangulated category. Moreover, we show that enhancements are unique when they exist, up to Morita equivalence.
\end{abstract}

\maketitle
\tableofcontents

% ---------------------------------------------------------------------------------

\numberwithin{equation}{section}

\section*{Introduction}

Let us fix a perfect ground field $k$. Recall that a field $k$ is perfect if $\characteristic(k)=0$ or if $\characteristic(k)=p$ and any element in $k$ has a $p^{\text{th}}$ root, e.g.~finite and algebraically closed fields, but not function fields in positive characteristic. A (linear) additive category $\C T$ is \emph{finite} if it is idempotent complete, $\dim\C T(X,Y)<\infty$ for any pair of objects $X,Y\in\C T$, and there are finitely many indecomposables up to isomorphism. Such a category is essentially small and satisfies the Krull--Remak--Schmidt property, i.e.~any object decomposes as a finite direct sum of indecomposables with local endomorphism algebra in an essentially unique way. If $X\in\C T$ is a \emph{basic additive generator}, consisting of a direct sum of one indecomposable for each isomorphism class, then $\C T(X,-)\colon\C T\rightarrow\proj{\Lambda}$ is an equivalence onto the category of finitely generated projective right $\Lambda$-modules for $\Lambda=\C T(X,X)$, which is a basic algebra. Conversely, $\proj{\Lambda}$ is finite for any finite-dimensional basic algebra $\Lambda$. Here, \emph{basic} means that, as a right module, $\Lambda$ decomposes as $\Lambda=P_1\oplus\cdots \oplus P_n$ were each $P_i$ is indecomposable and $P_i\ncong P_j$ for $i\neq j$. Any finite-dimensional algebra is Morita equivalent to a basic one, which is unique up to isomorphism.

Finite triangulated categories arise commonly in representation theory, and have been thoroughly studied from that viewpoint \cite{xiao_locally_2005, amiot_structure_2007, krause_report_2012}. If $\C T$ admits a triangulated structure, then $\Lambda$ is a Frobenius algebra. Indeed, Freyd \cite{freyd_stable_1966} showed that projective objects in the category $\modulesfp{\Lambda}$ of finitely presented right $\Lambda$-modules are injective. Therefore, the Baer criterion \cite[Lemma 3.7]{lam_lectures_1999} proves that $\Lambda$ is self-injective, so it is Frobenius since it is basic \cite[Proposition 3.9]{skowronski_frobenius_2011}.

In this paper, we care about enhancements. An \emph{enhanced triangulated category} is just a DG-category $\C A$. Its underlying triangulated category is $D^c(\C A)$, the derived category of compact objects. An \emph{enhanced triangulated structure} on $\C T$ consists of a DG-category $\C A$ and an equivalence $D^c(\C A)\simeq \C T$. We may want to incorporate the \emph{suspension functor} $\Sigma\colon\C T\rightarrow\C T$ to the picture. The pair $(\C T,\Sigma)$ has an enhanced triangulated structure if there is an equivalence $D^c(\C A)\simeq \C T$ which commutes with the suspension functors up to natural isomorphism. Recall that two DG-categories $\C A$ and $\C B$ are \emph{Morita equivalent} if there is a DG-functor $\C A\rightarrow\C B$ such that the induced functor $D^c(\C A)\rightarrow D^c(\C B)$ is an equivalence, or a zig-zag of such DG-functors connecting both.

Our main result is the following theorem, where $\Lambda^\env$ denotes the enveloping algebra of $\Lambda$.

\begin{theorem*}\label{main}
	Let $\C T\simeq\proj{\Lambda}$ be a finite category over a perfect field $k$ with $\Lambda$ a basic Frobenius algebra:
	\begin{enumerate}
		\item $\C T$ has an enhanced triangulated structure if and only if the third syzygy $\Omega^3_{\Lambda^\env}(\Lambda)$ is stably isomorphic to an invertible $\Lambda$-bimodule. 
		
		\item  The possible suspension functors $\Sigma\colon\C T\rightarrow\C T$ such that $(\C T,\Sigma)$ admits an enhanced triangulated structure are $\Sigma\cong-\otimes_\Lambda I$, where $I$ is an invertible $\Lambda$-bimodule whose inverse is stably isomorphic to $\Omega^3_{\Lambda^\env}(\Lambda)$. 
		
		\item If $\Sigma$ is as above, any two enhancements of $(\C T,\Sigma)$ are Morita equivalent.
	\end{enumerate}
\end{theorem*}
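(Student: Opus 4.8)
The plan is to translate the problem into differential graded algebras and run an obstruction theory. The first step is a reduction: since $\C T\simeq\proj\Lambda$ has a basic additive generator $X$, any enhancement $\C A$ satisfies $\C T=D^c(\C A)=D^c(A)$ for $A=\mathrm{REnd}_{\C A}(X)$, so up to Morita equivalence an enhancement of $(\C T,\Sigma)$ is a DG- (equivalently, minimal $A_\infty$-) algebra $A$ together with an equivalence $D^c(A)\simeq\C T$ sending $A\mapsto X$ and the shift to $\Sigma$. Such a $\Sigma$ is an autoequivalence of $\proj\Lambda$, hence $\Sigma\cong-\otimes_\Lambda I$ for an invertible bimodule $I$ by Morita theory, and $H^n(A)\cong\C T(X,\Sigma^nX)\cong I^{\otimes n}$, so $H^*(A)\cong\Gamma:=\bigoplus_{n\in\mathbb Z}I^{\otimes n}$, the skew-Laurent algebra over $\Lambda=\Gamma_0$ with $I^{\otimes n}$ in cohomological degree $n$. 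Because $\Gamma$ is strongly graded over $\Lambda$, the subcategory $\operatorname{add}(A)\subseteq D^c(A)$ is always identified with $\proj\Lambda$ via $A\mapsto\Lambda$; the genuine content of $D^c(A)\simeq\C T$ is therefore that $\operatorname{add}(A)$ be closed under cones in $D^c(A)$, i.e.\ equal to $\operatorname{thick}(A)=D^c(A)$.

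Next I would determine when $\operatorname{add}(A)$ is closed under cones. With a minimal model $A=(\Gamma,m_2,m_3,\dots)$ ($m_2$ the product), for $f\colon P\to Q$ between objects of $\operatorname{add}(A)$ one computes that $H^\ast(\mathrm{cone}\,f)$ is, in each degree, a two-step extension with sub a twist of $\coker(f_0)$ and quotient a twist of $\ker(f_0)$, whose extension class is governed by $m_3$. Requiring $H^\ast(\mathrm{cone}\,f)$ to be projective over $\Lambda$ for every $f$ is equivalent, since $\ker(f_0)$ is a second syzygy of $\coker(f_0)$ and since $\Omega^k_\Lambda(-)\cong\Omega^k_{\Lambda^\env}(\Lambda)\otimes_\Lambda(-)$ stably, to the pair of conditions: $\Omega^3_{\Lambda^\env}(\Lambda)$ is stably isomorphic to $I^{-1}$ (forcing also that the admissible $\Sigma=-\otimes_\Lambda I$ are exactly those with $I^{-1}$ stably isomorphic to $\Omega^3_{\Lambda^\env}(\Lambda)$, which is (2), hence the ``only if'' of (1) since $I$ is invertible iff $I^{-1}$ is); and $m_3$ equals the canonical class $\eta\in\exttate^3_{\Lambda^\env}(\Lambda,I^{-1})=\homst_{\Lambda^\env}(\Lambda,\Omega^{-3}_{\Lambda^\env}(I^{-1}))$ corresponding, under that stable isomorphism, to $\id{\Lambda}$. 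Here $\Lambda^\env$ is self-injective because $\Lambda$ is Frobenius, so all the relevant Ext groups are Tate.

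For the converse (existence when $\Omega^3_{\Lambda^\env}(\Lambda)$ is stably isomorphic to $I^{-1}$) and for (3), I would run the obstruction/spectral-sequence machinery for $A_\infty$-structures on $\Gamma$ with $m_3=\eta$ fixed. The obstructions to constructing the remaining $m_n$, and the obstructions to an $A_\infty$-isomorphism between two solutions, lie in Tate Hochschild groups which, by the strongly-graded reduction, take the form $\exttate^{s}_{\Lambda^\env}(\Lambda,I^{\otimes j})\cong\htate{s+3j}{\Lambda}$. The key point is that $\eta$ is an isomorphism in $\modulesfpst{\Lambda^\env}$, so Yoneda multiplication by $\eta$ is a compatible system of isomorphisms identifying these groups across the relevant $3$-periodicity; this makes the obstruction towers degenerate above degree $3$, so that all higher obstructions to existence vanish and all obstructions to an $A_\infty$-isomorphism between two enhancements vanish. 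Hence an enhancement exists, and any two have isomorphic minimal models, so are quasi-isomorphic and therefore Morita equivalent. This gives (1), (2) and (3).

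The step I expect to be the main obstacle is the last one, and within it the vanishing of the higher obstructions: this requires genuine control of the Gerstenhaber bracket and higher Massey products on $\bigoplus_{s,j}\exttate^{s}_{\Lambda^\env}(\Lambda,I^{\otimes j})$ and a careful exploitation of the invertibility of $\eta$ in the stable bimodule category to force the needed periodicity. A secondary delicate point is the passage in the second step from the categorical statement ``$\operatorname{add}(A)$ is closed under cones'' to the clean Tate-cohomological criterion, including checking that this criterion is insensitive to the operations $m_n$ with $n\ge4$.
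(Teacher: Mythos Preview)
Your overall architecture matches the paper's: reduce to minimal $A_\infty$-structures on the graded algebra $\Gamma=\Lambda(\sigma)$ (with $I\cong{}_\sigma\Lambda_1$), characterize the pre-triangulated condition via the class $\{m_3\}$, and run an obstruction theory. Two points, however, are misformulated in ways that would block the argument.

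First, the criterion ``$m_3$ equals the canonical class $\eta$ corresponding to $\id{\Lambda}$'' is ill-posed and too strong. A stable isomorphism $\Omega^3(\Lambda)\cong I^{-1}$ is only unique up to a unit in $\htate{0}{\Lambda,\Lambda}$, so there is no canonical $\eta$. The correct condition (what the paper calls an \emph{edge unit}) is that the image of $\{m_3\}$ in $\htate{3,-1}{\Lambda,\Lambda(\sigma)}$ be \emph{a} unit, i.e.\ correspond to \emph{some} stable isomorphism. Consequently your uniqueness argument cannot conclude that two enhancements share a minimal model: distinct edge units give genuinely distinct gauge classes of $A_\infty$-structures. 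The paper obtains uniqueness of the Morita class instead by showing that $\aut(\Lambda(\sigma))$, via the subgroup $Z(\Lambda)^\times$, acts transitively on the set of edge units, so that the distinct minimal models become equivalent only after an automorphism of the underlying graded algebra.

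Second, the obstruction groups live in $\hh{\star,\ast}{\Lambda(\sigma),\Lambda(\sigma)}$, the Hochschild cohomology of the \emph{graded} algebra, not in $\exttate_{\Lambda^\env}^s(\Lambda,I^{\otimes j})=\htate{s,\ast}{\Lambda,\Lambda(\sigma)}$; the two are related by a long exact sequence whose connecting map is governed by the Euler class $\{\delta\}$. The paper's vanishing mechanism is: (i) cup product with the edge unit $u$ is an isomorphism on $\hh{p,\ast}{\Lambda(\sigma),\Lambda(\sigma)}$ for $p\geq 2$ (your periodicity, lifted from the ungraded case through that long exact sequence); (ii) $u\cdot-$ is null-homotopic for the differential $d_2=[u,-]$, with explicit null-homotopy $\{\delta\}\cdot-$. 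Together these force the $E_3$ page of the Bousfield--Kan spectral sequence to vanish in the relevant range. But step~(ii) requires $[u,-]$ to actually be a differential, i.e.\ the Gerstenhaber square $\gsquare(u)=0$; arranging this (by replacing $u$ with $u+\{\delta\}\cdot y$ for a suitable $y$) is a separate nontrivial step that your plan omits, and it is exactly here that the Euler class and the graded/ungraded comparison do real work.
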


% EXAMPLES

% (Deformed) preprojective algebras of generalized Dynkin type \cite[Proposition 3.4]{bialkowski_deformed_2007} over an algebraically closed field.

% Möbius algebras $B_{m,2}$, $m\geq 2$, \cite[Theorem 3.5]{erdmann_twisted_2002} over an algebraically closed field.

% Nakayama algebras $B_e^2$, $e\geq 1$, \cite[\S4.2]{erdmann_twisted_1999} over any (perfect for our theorem to apply) field.

% (Deformed) mesh algebras of generalized Dynkin type \cite[Theorem 9.2]{erdmann_periodic_2008}. <- stated here but based in unpublished results (not even preprint)

% The mesh algebra of type $\mathbb C_n$, $n\geq 3$, \cite[Proposition 2.6]{bialkowski_deformed_2012}.

Most triangulated categories appearing in the literature are born with an enhancement. This theorem is still interesting for finite triangulated categories for which an enhancement is known, since it shows uniqueness up to Morita equivalence. The stable category $\modulesfpst{A}$ of finite-dimensional modules over a self-injective finite-dimensional algebra $A$ of finite representation type fits in this framework, e.g.~$A=k[x]/(x^n)$. Over an algebraically closed field, these algebras where classified by Riedtmann \cite{riedtmann_algebren_1980, riedtmann_representation-finite_1980, riedtmann_representation-finite_1983}. Over an arbitrary field, there are even more examples and the general picture is yet unknown, see e.g.~\cite{blaszkiewicz_self-injective_2012}. Also the stable category $\underline{\operatorname{MCM}}(A)$ of maximal Cohen--Macaulay modules over a commutative complete local algebra $A$ of finite
Cohen--Macaulay representation type fits here \cite[\S2]{krause_report_2012}, e.g.~$A=k[[x,y]]/(y^2+x^n)$ when $n$ is odd and $k$ is algebraically closed \cite[Proposition 5.11]{yoshino_cohen-macaulay_1990}.

This theorem also applies to the non-standard finite $1$-Calabi-Yau categories defined in \cite{amiot_structure_2007} from deformed preprojective algebras of generalized Dynkin type over an algebraically closed field of characteristic 2 \cite[Theorem 1.3]{bialkowski_deformed_2007}, \cite[Corollary]{bialkowski_deformed_2019}. So far, these categories were only known to be triangulated in the ordinary sense, no enhancements were known (except for those of type $\mathbb L_n$ \cite[Theorems 2 and 3]{bialkowski_deformed_2011}) but (1) shows that an enhancement indeed exists. The simplest of these examples, of Dynkin type $\mathbb D_4$, is the algebra $\Lambda$ obtained as the quotient of the path algebra of the quiver
\begin{center}
	\begin{tikzcd}
		0\arrow[rd, "a_0", shift left=.5ex]&&\\
		&2\arrow[r, "a_2", shift left=.5ex]
		\arrow[lu, "\bar a_0", shift left=.5ex]
		\arrow[ld, "\bar a_1", shift left=.5ex]
		&3\arrow[l, "\bar a_2", shift left=.5ex]\\
		1\arrow[ru, "a_1", shift left=.5ex]&&
	\end{tikzcd}
\end{center}
over an algebraically closed field of characteristic $2$ by the two-sided ideal generated by the following five elements
\begin{align*}
	\bar{a}_0a_0,&&
	\bar{a}_1a_1,&&
	a_2\bar{a}_2,&&
	a_0\bar{a}_0+a_1\bar{a}_1+\bar{a}_2a_2+a_1\bar{a}_1a_0\bar{a}_0,&&
	a_0\bar{a}_0a_1\bar{a}_1+a_1\bar{a}_1a_0\bar{a}_0.
\end{align*}
There are infinitely many known examples of this kind associated to the Dynkin quivers $\mathbb{D}_n$, $n\geq 4$, $\mathbb{E}_7$, and $\mathbb E_8$. There may actually be many more, even in characteristic $\neq 2$, since deformed preprojective algebras associated to these quivers are not yet classified. 

If $\Lambda$ is connected, in the sense that it is not a product of two non-trivial algebras, and not separable, we prove that, if $\C T$ has an enhanced triangulated structure, then there is an essentially unique suspension functor in (2) so, by (3), the enhancement of $\C T$ is unique up to Morita equivalence (Proposition \ref{irreducible}). Deformed preprojective algebras of generalized Dynkin type fit in this family, e.g.~the example above.

If $k$ is algebraically closed, $\Lambda$ is connected, $\C T$ is standard, i.e.~equivalent to the mesh category of its Auslander--Reiten quiver, and we know that an enhancement exists, then (3) was established in \cite{keller_remark_2018}, and was already implicit in \cite{amiot_structure_2007}, where all possible such $\Lambda$ are classified. The simplest of these examples is the algebra of dual numbers $\Lambda=k[x]/(x^2)$, and more generally, the algebra whose representations are periodic chain complexes of any fixed period.

Non-standard examples are not classified, even over algebraically closed fields. Apart from the aforementioned ones, we have the deformed mesh algebras of type $\mathbb B_n$, $n\geq 3$, also in characteristic $2$, see \cite[Example 9.1]{erdmann_periodic_2008}. We do not know any examples in characteristic $\neq 2$. The claimed examples in characteristic $3$ in \cite{bialkowski_nonstandard_2007} are flawed, since the deformations do not satisfy the admissibility condition for the quivers $\mathbb E_n$, $n=6,7,8$, described in \cite{erdmann_periodic_2008}. Nevertheless, there are non-standard Frobenius algebras in arbitrary characteristic \cite[\S4.12]{skowronski_selfinjective_2006}, so there may be many non-standard examples yet to be discovered.

Disconnected examples are also interesting, because we can play with different suspension functors. For instance, $\Lambda=k^n$ satisfies (1) for all $n\geq 1$ because it is separable. In this case $\C T$ is the $n$-fold power of the category $\modulesfp{k}$ of finite-dimensional vector spaces. Moreover, the enveloping algebra of $\Lambda$ is $k^{2n}$, which is semi-simple, hence $\Sigma$ can be the tensor product with an arbitrary invertible $\Lambda$-bimodule, by (2). In this case this is the same as saying that $\Sigma$ can be any permutation of $n$ elements, regarded as an automorphism of $\C T=\modulesfp{k}^n$ in the obvious way, see \cite[Proposition 3.8]{bolla_isomorphisms_1984}. By (3), the pair $(\C T,\Sigma)$ has a unique $k$-linear enhancement up to Morita equivalence. If $\Sigma$ is the cyclic permutation then $(\C T,\Sigma)$ cannot be decomposed as a non-trivial product, i.e.~it is connected as a pair. This is actually the triangulated category considered in \cite[Example]{rizzardo_note_2019} when $n>0$ is even and $k=\ell(x_1,\dots,x_{n+1})$ is a function field on $n+1$ variables (which is perfect in characteristic $0$). In that paper the authors show that this particular $(\C T,\Sigma)$ has non-Morita equivalent enhancements over $\ell$. We deduce that only one of them can be defined over $k$ as per $(3)$.

%In the non-connected case, the non-uniqueness of the suspension functor is obvious, not even up to equivalence, e.g.~for $\Lambda=k\times k$, $\C T\simeq\proj{k}\times \proj{k}$ has two triangulated structures whose exact triangles are the split ones, one with the identity suspension functor and another one with suspension given by exchanging coordinates.

Hanihara \cite[Theorem 1.2]{hanihara_auslander_2018} proved recently that, under the assumptions of the previous theorem, $\C T$ admits an ordinary triangulated structure if and only if $\Omega^3_{\Lambda^\env}(\Lambda)$ is stably isomorphic to an invertible $\Lambda$-bimodule, see also \cite[Proposition 3.8]{bolla_isomorphisms_1984}. He actually did not use the octahedral axiom at all. Hence, we deduce that $\C T$ admits an ordinary triangulated structure in the sense of Puppe \cite{puppe_formal_1962} if and only if it admits an enhanced triangulated structure. Beware that the underlying ordinary triangulated structure of the latter need not coincide with the former, therefore there may be finite triangulated categories over a perfect field without enhancements (there may even be finite Puppe triangulated categories not satisfying the octahedral axiom!). It would be interesting to know whether this can really happen.

There are many Frobenius algebras which do not satisfy (1) in the previous theorem. Nakayama algebras yield a whole family of examples (Proposition \ref{counter}). Among these, the simplest are $\Lambda=k[x]/(x^n)$, $n>2$.

Theorems on uniqueness of enhancements exist in the literature for triangulated categories coming from algebraic geometry \cite{lunts_uniqueness_2010,canonaco_uniqueness_2018} or algebraic topology \cite{schwede_stable_2001, schwede_uniqueness_2002, schwede_stable_2007}. Nevertheless, enhancements are in general not unique when they exist, see \cite{schlichting_note_2002,kajiura_-enhancements_2013, rizzardo_note_2019} for some algebraic examples and \cite{franke_uniqueness_1996}, \cite[\S2.1]{schwede_stable_2001}, \cite{patchkoria_exotic_2017}, \cite{pst_2018} for topologically flavored examples.

%As far as we know, our results are the first ones for triangulated categories arising mainly in representation theory. Moreover, the previously known results are mostly about large triangulated categories, and their proofs take advantage of the theory of compactly generated triangulated categories. We only find other uniqueness results for small categories in the algebraic geometry setting, with stronger hypotheses than their large counterparts, see \cite{canonaco_tour_2017} for a survey.

Despite our main result looks very algebraic, some crucial proofs in this paper use homotopy theory. Sets of enhancements appear as connected components of homotopical moduli spaces of enhancements. Some of our papers on the homotopy theory of operads from the last years were developed with this paper's main result in mind. Here we use them together with some Hochschild cohomology computations.

The main theorem is proved at the end, as Corollary \ref{main_theorem}. The claim in the connected but not separable case is Proposition \ref{irreducible}.

\subsection*{Standing assumptions and notation} Throughout this paper, we will work over a fixed ground field $k$, which acts everywhere. In particular, all categories are assumed to be $k$-linear, except for few clear exceptions. The tensor product over $k$ will simply be denoted by $\otimes$, unless otherwise indicated. In our main results, we need $k$ to be perfect. We will make explicit this assumption when needed. It will be a standing assumption in Sections \ref{edge_units_section}, \ref{restricted}, and \ref{vacolap}. Whenever we have a finite category $\C T$, $\Lambda$ will denote the endomorphism algebra of a basic additive generator, as above. We will often work with graded objects, such as graded vector spaces, graded algebras, graded categories, etc. This means $\mathbb Z$-graded, and the Koszul sign rule will be in place. We will also work with bigraded and, occasionally, trigraded objects. Algebras will be regarded as categories with only one object. We will often work with modules over small categories $\C C$, rather than just algebras. We refer to \cite{street_homotopy_1969, auslander_representation_1974} for basic homological algebra in this context. We will denote the Grothendieck abelian category of right $\C C$-modules by $\modules{\C C}$, and $\modulesfp{\C C}$ will be the full subcategory of finitely presented objects. If $\C C$ is graded, then we will actually consider the graded categories of graded modules. For $\C C$ ungraded, modules will also be ungraded, except when explicitly stated otherwise. For a Frobenius algebra $\Lambda$, we will also consider the stable module categories $\modulesst{\Lambda}$ and $\modulesfpst{\Lambda}$, obtained by quotienting out maps factoring through a projective-injective object. Left modules are the same as right modules over the opposite category $\C C^\op$, and bimodules are right modules over the enveloping category $\C C^\env=\C C\otimes\C C^\op$. Further notation and conventions will be introduced the first time we use them.

\subsection*{Acknowledgements} This paper has benefited from valuable conversations, email exchange, and MathOverflow interaction with many colleagues, including Jerzy Białkowski, Marco Farinati, Ramón Flores, Norihiro Hanihara, Dolors Herbera, Bernhard Keller, Henning Krause, Georges Maltsiniotis, Victor Ostrik, Jeremy Rickard, Antonio Rojas, Manuel Saorín, Stefan Schwede, and  Mariano Suárez-Álvarez (in surname alphabetical order). The author wishes to express his gratitude to all of them and to an anonymous referee.

\section{Hochschild cohomology}\label{principio}

The \emph{Hochschild cohomology} of a graded category $\C C$ with coefficients in a $\C C$-bimodule $M$ is 
\[\hh{\star,*}{\C C,M}=\ext^{\star,*}_{\C C^\env}(\C C,M).\]
Here $\star$ is the \emph{Hochschild} or \emph{horizontal degree}, the length of the extension, and $*$ is the \emph{internal} or \emph{vertical degree}, coming from the fact that the category of $\C C$-bimodules is graded. The \emph{total degree} is the sum of both. We will denote by $\abs{x}$ the degree of an element $x$ in a singly graded object, or the total degree of an element in a bigraded object; horizontal and vertical degrees will be denoted by $\abs{x}_h$ and $\abs{x}_v$, respectively. 

Hochschild cohomology can be computed as the cohomology of the \emph{Hochschild cochain complex} $\hc{\star,*}{\C C,M}$, given by
\[\hc{n,*}{\C C,M}=\prod_{X_0,\dots, X_n\in\C C} \hom_{k}^*(\C C(X_1,X_{0})\otimes\cdots\otimes\C C(X_n,X_{n-1}),M(X_n,X_0)),\]
with bidegree $(1,0)$ differential
\begin{multline*}%\label{hochschild_differential}
d(\varphi)(f_1,\dots,f_{n+1})=(-1)^{\abs{\varphi}_v\abs{f_1}}f_1\cdot \varphi(f_2,\dots,f_{n+1})\\+\sum_{i=1}^n(-1)^i \varphi(\dots, f_if_{i+1},\dots)+(-1)^{n+1}\varphi(f_1,\dots,f_n)\cdot f_{n+1}.
\end{multline*}
This complex arises from the \emph{bar projective resolution} $B_\star(\C C)$ of $\C C$ as a $\C C$-bimodule, called standard complex in \cite[\S17]{mitchell_rings_1972}. It is given by
\[B_n(\C C)=\bigoplus_{X_0,\dots, X_n\in\C C} \C C(X_0,-)\otimes\C C(X_1,X_{0})\otimes\cdots\otimes\C C(X_n,X_{n-1})\otimes\C C(-,X_n),\]
with differential
\[d(f_0\otimes\dots\otimes f_{n+1})=\sum_{i=0}^n(-1)^i\cdots\otimes f_if_{i+1}\otimes\cdots\]
and augmentation
\begin{align*}
	\varepsilon\colon B_\star(\C C)&\longrightarrow\C C,&\varepsilon(f_0\otimes f_1)&=f_0f_1.
\end{align*}

We will now describe the algebraic structure on Hochschild cohomology, following the sign conventions in \cite[\S1]{muro_enhanced_2020}. Therein, we define the Hochschild complex of $\C C$ by using the endomorphism operad of the suspension of the graded vector space (with several objects) underlying $\C C$. In this section, we do not suspend (nor we use the language of operads, although it is implicit). Therefore, some formulas here have extra signs coming from the well known operadic suspension as recalled in \cite[Definition 2.4 and Remark 2.5]{muro_cylinders_2016}.

Given two $\C C$-bimodules $M$ and $N$, we have the \emph{cup-product} operation,
\begin{align*}
	\smile\colon\hc{p,q}{\C C,M}\otimes \hc{s,t}{\C C,N}&\To \hc{p+s,q+t}{\C C,M\otimes_{\C C} N},
\end{align*}
which is a map of bigraded complexes
defined by the following formula,
\[(\varphi\smile\psi)(f_1,\dots, f_{p+s})=
(-1)^{t\sum_{i=1}^p\abs{f_i}}
\varphi(f_1,\dots, f_p)\otimes\psi(f_{p+1},\dots,f_{p+s}).\]
This operation comes from the differential graded comonoid structure of $B_\star(\C C)$ in the category of $\C C$-bimodules defined by the comultiplication
\begin{gather*}
	\Delta\colon B_\star(\C C)\longrightarrow B_\star(\C C)\otimes_{\C C}B_\star(\C C),\\
	\Delta(f_0\otimes\cdots\otimes f_{n+1})=\sum_{i=0}^{n}(f_0\otimes\cdots\otimes f_i\otimes 1_{X_i})\otimes(1_{X_i}\otimes f_{i+1}\otimes\cdots\otimes f_{n+1}).
\end{gather*}
The counit is the augmentation $\varepsilon$. 
The induced cup-product on Hochschild cohomology will be denoted in the same way. 
If $M$ is a monoid in the category of $\C C$-bimodules then the cup-product induces a differential bigraded algebra structure on $\hc{\star, *}{\C C,M}$ and a bigraded algebra structure on $\hh{\star,*}{\C C,M}$. The main example is $M=\C C$ itself, the tensor unit in $\C C$-bimodules. The cup-product is bigraded commutative in $\hh{\star,*}{\C C,\C C}$, satisfying the Koszul sign rule with respect to both degrees separately.
If we tweak the differential and the cup product in the following way
\begin{align*}
	d'(\varphi)&=(-1)^{\abs{\varphi}_v}d(\varphi),\\
	\varphi\cdot \psi&=(-1)^{tp}\varphi\smile\psi,
\end{align*}
then the product becomes a map of complexes at the cochain level, and graded commutative in $\hh{\star,*}{\C C,\C C}$, all with respect to the total degree. Actually, this dot product and $d'$ correspond to the cup product and the differential used in \cite{muro_enhanced_2020}. Therefore, the bigraded commutativity of the cup product in $\hh{\star,*}{\C C,\C C}$, as defined here, follows formally from the graded commutativity of the dot product with respect to the total degree, checked in \cite{muro_enhanced_2020}, by purely formal reasons, compare \cite[Remark 2.8]{muro_homotopy_2019}. 

The \emph{Gerstenhaber bracket} endows $\hc{\star,*}{\C C,\C C}$ with a DG-Lie algebra structure of degree $-1$ for the differential $d'$,
\begin{align*}
[-,-]\colon\hc{p,q}{\C C,\C C}\otimes \hc{s,t}{\C C,\C C}&\To \hc{p+s-1,q+t}{\C C,\C C}.
\end{align*}
It is defined as the commutator 
\[[\varphi,\psi]=\varphi\bullet\psi-(-1)^{(p+q-1)(s+t-1)}\psi\bullet\varphi\]
of the \emph{pre-Lie product}
\begin{equation}\label{prelie}
\bullet\colon\hc{p,q}{\C C,\C C}\otimes \hc{s,t}{\C C,\C C}\To \hc{p+s-1,q+t}{\C C,\C C}
\end{equation}
given by
\begin{multline*}
(\varphi\bullet\psi)(f_1,\dots, f_{p+s-1})=\sum_{i=1}^{p}(-1)^{(s-1)(p-i)+t\left(p-1+\sum_{j=1}^{i-1}\abs{f_j}\right)}\varphi(f_1,\dots,f_{i-1},\\\psi(f_{i},\dots, f_{i+s-1}),f_{i+s},\dots,f_{p+s-1}).
\end{multline*}

If $m_2\in\hc{2,0}{\C C,\C C}$ denotes the composition in $\C C$, then $d'=[m_2,-]$. This pre-Lie product corresponds to the brace operation $\varphi\{\psi\}$ in \cite{muro_enhanced_2020}.

% 	{\color{blue} COMMENTED COMPUTATION

% Check $d' = \pm [m_2,-]$.

% \begin{multline*}
% 	[m_2,\varphi](f_1,\dots, f_{p+1})=(m_2\bullet\varphi)(f_1,\dots, f_{p+1})-(-1)^{p+q-1}(\varphi\bullet m_2)(f_1,\dots, f_{p+1})=\\
% 	(-1)^{p+q-1}\varphi(f_1,\dots, f_{p})\cdot f_{p+1}
% 	+(-1)^{q(1+\abs{f_1})}f_1\cdot\varphi(f_2,\dots, f_{p+1})\\
% 	-(-1)^{p+q-1}\sum_{i=1}^{p}(-1)^{p-i}\varphi(\dots, f_i\cdot f_{i+1},\dots)=
% 	(-1)^qd(\varphi)
% \end{multline*}

% 	END OF COMMENTED COMPUTATION}

The square of the pre-Lie product
\[\gsquare(\varphi)=\varphi\bullet\varphi\]
induces in cohomology an operation, called \emph{Gerstenhaber square},
\[\gsquare\colon\hh{p,q}{\C C,\C C}\To\hh{2p-1,2q}{\C C,\C C}\]
whenever $p+q$ is even or $\characteristic k=2$.

The relations satisfied by the product, the Lie bracket, and the Gerstenhaber square in $\hh{\star,*}{\C C,\C C}$ are:
\begin{align*}
(x\cdot y)\cdot z&=x\cdot (y\cdot z),\\
x\cdot y&=(-1)^{\abs{x}\abs{y}}y\cdot x,\\
[x,y]&=-(-1)^{(\abs{x}-1)(\abs{y}-1)}[y,x],\\
[x,x]&=0, \qquad \abs{x}\text{ odd},\\
[x,[y,z]]&=[[x,y],z]+(-1)^{(\abs{x}-1)(\abs{y}-1)}[y,[x,z]],\\
[x,[x,x]]&=0, \qquad\abs{x}\text{ even},\\
[x,y\cdot z]&=[x,y]\cdot z+(-1)^{(\abs{x}-1)\abs{y}}y\cdot[x,z],\\
\gsquare(x+y)&=\gsquare(x)+\gsquare(y)+[x,y], \qquad \abs{x}, \abs{y}\text{ even or }\characteristic k=2,\\
\gsquare(x\cdot y)&=\gsquare(x)\cdot y^2+x\cdot[x,y]\cdot y+x^2\cdot\gsquare(y),  \qquad \text{idem},\\
[\gsquare(x),y]&=[x,[x,y]],  \qquad \abs{x}\text{ even or }\characteristic k=2.
\end{align*}
We call this algebraic structure a \emph{Gerstenhaber algebra} (usually only the product, the Lie bracket, and a subset of relations are required). The relations show that
\[2\cdot \gsquare(x)=[x,x]\]
whenever the Gerstenhaber square is defined. The Gerstenhaber square and the relations where it appears are redundant if $\characteristic k\neq 2$.

The bivariant functoriality of Hochschild cohomology can be described as in \cite{muro_functoriality_2006}. A graded functor $F\colon \C D\rightarrow \C C$ and a $\C C$-bimodule morphism $\tau\colon N\rightarrow M$ induce morphisms 
on Hochschild cochain complexes that we denote by
\begin{align*}
F^*\colon \hc{\star,\ast}{\C C,M}&\To \hc{\star,\ast}{\C D,M(F,F)},
&F^*(\varphi)(g_1,\dots, g_n)&=\varphi(F(g_1),\dots, F(g_n));\\
\tau_*\colon \hc{\star,\ast}{\C C,N}&\To \hc{\star,\ast+\abs{\tau}}{\C C,M},&
\tau_*(\varphi)(f_1,\dots, f_n)&=\tau(\varphi(f_1,\dots, f_n)).
\end{align*}
The induced morphisms on Hochschild cohomology, denoted in the same way,
\begin{align*}
F^*\colon \hh{\star,\ast}{\C C,M}&\To \hh{\star,\ast}{\C D,M(F,F)},\\
\tau_*\colon \hh{\star,\ast}{\C C,N}&\To \hh{\star,\ast+\abs{\tau}}{\C C,M},
\end{align*}
satisfy $F^*\tau_*=\tau(F,F)_*F^*$. These induced morphisms are compatible with the cup-product in the obvious way, actually at the cochain level. Hochschild cohomology is \emph{Morita invariant}, i.e.~$F^*$ is an isomorphism for any coefficient bimodule provided $F$ induces an equivalence of categories $\modules{\C C}\simeq\modules{\C D}$.

Ungraded objects are graded objects concentrated in degree $0$. An ungraded category $\C C$ has graded and ungraded modules and bimodules. If $\C C$ is an ungraded category and $M$ is a graded $\C C$-bimodule, then graded and ungraded Hochschild cohomology are related by the following obvious formula, \[\hh{p,q}{\C C,M}=\hh{p}{\C C,M^q}.\]

\section{The Gerstenhaber square in the kernel}

In this short section we prove that the Gerstenhaber square operation restricts to the kernel of any morphism induced by a graded functor.

\begin{proposition}\label{kernel}
	Let $F\colon\C D\rightarrow\C C$ be a graded functor. If $x\in \hh{\star,*}{\C C,\C C}$ is in the kernel of $F^*\colon\hh{\star,*}{\C C,\C C}\rightarrow \hh{\star,*}{\C D,\C C(F,F)}$ and $\abs{x}$ is even or $\characteristic k=2$, then $F^*(\gsquare(x))=0$ too.
\end{proposition}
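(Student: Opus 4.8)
The plan is to work at the cochain level and to use the formula $\gsquare(\varphi)=\varphi\bullet\varphi$ together with the compatibility of $F^*$ with the pre-Lie product. The point is that $F^*\colon \hc{\star,*}{\C C,\C C}\to\hc{\star,*}{\C D,\C C(F,F)}$ is, at the cochain level, given by precomposition with $F$ on inputs, and hence commutes with the pre-Lie product in the sense that $F^*(\varphi\bullet\psi)=F^*(\varphi)\bullet F^*(\psi)$, where on the right $\bullet$ denotes the brace-type operation that lands in $\hc{\star,*}{\C D,\C C(F,F)}$ — here one uses that the $\C C$-bimodule $\C C(F,F)$ receives the left/right $\C D$-actions via $F$, so substituting $F(g_i)$'s into $\varphi$ and $\psi$ and then reassembling is the same whether one substitutes before or after taking $\bullet$. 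Thus $F^*(\gsquare(\varphi))=\gsquare(F^*(\varphi))$ at the cochain level.

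Now suppose $x\in\hh{\star,*}{\C C,\C C}$ has $\abs{x}$ even (or $\characteristic k=2$) and $F^*(x)=0$ in $\hh{\star,*}{\C D,\C C(F,F)}$. Pick a cocycle $\varphi$ representing $x$. Then $F^*(\varphi)$ is a coboundary, say $F^*(\varphi)=d'(\eta)$ for some cochain $\eta$. We need to show $\gsquare(F^*(\varphi))=F^*(\varphi)\bullet F^*(\varphi)$ is a coboundary. This is exactly the statement that the Gerstenhaber square of a class that is already zero must itself be zero, applied inside $\hc{\star,*}{\C D,\C C(F,F)}$ — but one has to be slightly careful, because $\gsquare$ is only defined as an operation on cohomology, and a priori it is only known to vanish on the zero class of $\hh{\star,*}{\C D,\C C(F,F)}$ when that graded object carries the full Gerstenhaber structure, i.e. when the coefficient bimodule is $\C D$ itself. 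So the cleanest route is: $\gsquare$ of the zero cohomology class is zero whenever $\gsquare$ is defined, and $F^*(x)=0$ means precisely that $\gsquare(F^*(x))=0$; combining with $F^*(\gsquare(x))=\gsquare(F^*(x))$ (which holds in cohomology because it holds on cochains) gives $F^*(\gsquare(x))=0$.

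The main obstacle is making the identity $F^*(\gsquare(x))=\gsquare(F^*(x))$ rigorous, and in particular checking that the brace/pre-Lie operation used to define $\gsquare$ is genuinely natural under $F^*$ with the twisted coefficients $\C C(F,F)$: one must verify that the sign conventions in the pre-Lie product \eqref{prelie} are unchanged after applying $F$ to the arguments (they are, since $\abs{f_j}=\abs{F(f_j)}$ as $F$ is a graded functor, so every exponent in the defining formula is preserved), and that the output, which naively lives in $\C C$, is correctly read as an element of the $\C D$-bimodule $\C C(F,F)$ with its $F$-pulled-back actions — but since $\gsquare$ only involves $\varphi$ substituted into itself, no left/right module action of $\C C$ on $\C C$ is used at all, only composition, so no subtlety arises there. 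Once this naturality is in hand, the proposition is immediate: apply $F^*$ to $\gsquare(x)$, move it inside to get $\gsquare(F^*(x))=\gsquare(0)=0$.
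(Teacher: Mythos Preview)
There is a genuine gap. The operation $\gsquare$ is not defined on $\hh{\star,*}{\C D,\C C(F,F)}$, so the equation $F^*(\gsquare(x))=\gsquare(F^*(x))$ that you invoke has no meaning. Concretely: for $\alpha,\beta\in\hc{\star,*}{\C D,\C C(F,F)}$ the expression $\alpha\bullet\beta$ would require feeding the output of $\beta$ --- a morphism in $\C C$, between objects of the form $F(X)$ --- as an \emph{input} of $\alpha$; but $\alpha$ takes inputs in $\C D$, not in $\C C$, and in general there is no map $\C C(F(-),F(-))\to\C D(-,-)$ available. Your remark that ``no left/right module action of $\C C$ on $\C C$ is used at all, only composition'' misses the point: it is precisely the substitution of the output of one cochain as an argument of the other that fails, and the bimodule action is irrelevant to that failure. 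So there is no pre-Lie product on $\hc{\star,*}{\C D,\C C(F,F)}$ and hence no $\gsquare$ there.

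What does survive is the \emph{one-sided} operation
\[
\bullet\colon\hc{\star,*}{\C C,\C C}\otimes\hc{\star,*}{\C D,\C C(F,F)}\longrightarrow\hc{\star,*}{\C D,\C C(F,F)},
\]
for which one has $F^*(\varphi\bullet\varphi')=\varphi\bullet F^*(\varphi')$ (note: $\varphi$ on the right, not $F^*(\varphi)$). With $\varphi$ a cocycle for $x$ and $F^*(\varphi)=d'(\psi)$, you are thus reduced to showing that $\varphi\bullet d'(\psi)$ is a coboundary. But this mixed $\bullet$ is not a chain map in the second variable: there is a cup-product correction term, recorded in Lemma~\ref{technical}. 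The paper's proof closes the gap by computing $d'(\psi\cdot\psi)$ via the Leibniz rule (using that $\abs{\psi}$ is odd, respectively that $\characteristic k=2$) and combining with that lemma to obtain the explicit coboundary $\varphi\bullet F^*(\varphi)=d'(\psi\cdot\psi-\varphi\bullet\psi)$. Your argument provides no substitute for this step.
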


Given a graded functor $F\colon\C D\rightarrow\C C$, we can define an operation
\begin{equation*}
\bullet\colon\hc{p,q}{\C C,\C C}\otimes \hc{s,t}{\C D,\C C(F,F)}\To \hc{p+s-1,q+t}{\C D,\C C(F,F)}
\end{equation*}
by essentially the same formula as the pre-Lie product in \eqref{prelie},
\begin{multline*}
(\varphi\bullet\psi)(g_1,\dots, g_{p+s-1})=\sum_{i=1}^{p}(-1)^{(s-1)(p-i)+t\left(p-1+\sum_{j=1}^{i-1}\abs{g_j}\right)}\varphi(F(g_1),\dots,F(g_{i-1}),\\\psi(g_{i},\dots, g_{i+s-1}),F(g_{i+s}),\dots,F(g_{p+s-1})).
\end{multline*}
We recover \eqref{prelie} for $F$ the identity. This new operation and the pre-Lie product are clearly related by the formula
\[F^*(\varphi\bullet\varphi')=\varphi\bullet F^*(\varphi').\]

The proof of the previous proposition is based on the following formula, which is a slight generalization of \cite[(1.4)]{muro_enhanced_2020}.

\begin{lemma}\label{technical}
	Given $\varphi\in\hc{\star,*}{\C C, \C C}$ and $\psi\in\hc{\star,*}{\C D, \C C(F,F)}$, the following formula holds,
	\begin{equation*}
	F^*(\varphi)\cdot\psi-(-1)^{\abs{\varphi}\abs{\psi}}\psi\cdot F^*(\varphi)=(-1)^{\abs{\varphi}}(d'(\varphi\bullet\psi)-d'(\varphi)\bullet\psi+(-1)^{\abs{\varphi}}\varphi\bullet d'(\psi)).
	\end{equation*}
\end{lemma}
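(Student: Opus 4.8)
The plan is to recognise the asserted formula as the relativisation along $F$ of the cochain-level identity \cite[(1.4)]{muro_enhanced_2020}, which is exactly the case $F=\id{\C C}$ (there $\C C(F,F)=\C C$ and $\bullet$ is the pre-Lie product \eqref{prelie}), and to show that the proof of \cite[(1.4)]{muro_enhanced_2020} goes through verbatim once the relative operations are set up correctly. So the first step is a structural observation: the $\C D$-bimodule $M:=\C C(F,F)$, with $M(X,Y)=\C C(FX,FY)$, is a \emph{monoid} in $\C D$-bimodules, its multiplication $M\otimes_{\C D}M\to M$ being induced by the composition of $\C C$ and its unit $\C D\to M$ by the functor $F$. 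Consequently $\hc{\star,*}{\C D,M}$ is a differential bigraded algebra under the cup product given by the same formula as for $\C C$-coefficients, the twisted differential $d'$ of Section \ref{principio} is defined on it, and the operation $\bullet$ introduced before Lemma \ref{technical} is governed by the very formula \eqref{prelie}, the only difference being that the arguments of $\varphi$ not absorbed by $\psi$ are passed through $F$. Since $F$ is a graded functor, $\abs{F(g)}=\abs{g}$ for every morphism $g$ of $\C D$, so no sign in any of these defining formulas is altered.

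The second step is the comparison of the two sides on generic arguments. I would evaluate $F^*(\varphi)\cdot\psi$, $\psi\cdot F^*(\varphi)$, $d'(\varphi\bullet\psi)$, $d'(\varphi)\bullet\psi$ and $\varphi\bullet d'(\psi)$ on a composable string $g_1,\dots,g_{p+s}$ of morphisms of $\C D$ and expand everything fully using the explicit formulas of Section \ref{principio}. Term by term, and with identical signs, this reproduces the expansion of the corresponding terms of \cite[(1.4)]{muro_enhanced_2020} on a generic composable string of morphisms of $\C C$, followed by the substitution, in every summand, of each argument fed into $\varphi$ (and not into $\psi$) by its image under $F$: indeed $F$ sends a composite $g_ig_{i+1}$ to $F(g_i)F(g_{i+1})$, a composition in $\C C$ linking the $M$-valued output of $\psi$ with a neighbouring $F(g)$ is by definition the $\C D$-bimodule action on $M$, and $\abs{F(g)}=\abs{g}$ keeps every sign in place. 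As \cite[(1.4)]{muro_enhanced_2020} is precisely the assertion that these expanded summands cancel in pairs, the same cancellations occur here, which is the claimed identity.

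The only delicate point, just as in the original, is the sign bookkeeping: the operadic suspension implicit in Section \ref{principio} contributes signs depending on the horizontal degrees, $d'$ carries the extra factor $(-1)^{\abs{\cdot}_v}$, and the pre-Lie and cup-product formulas carry signs of the form $t(p-1+\sum\abs{g_j})$ and $t\sum\abs{f_i}$; one has to check that these line up through the telescoping of the boundary terms. But because $F$ is degree-preserving, the relevant sign exponents are literally the same functions of the degrees as those already verified for \cite[(1.4)]{muro_enhanced_2020}, so nothing genuinely new must be rechecked — this is what legitimises the verbatim transfer, and it is also the main (and essentially only) obstacle. If a self-contained computation is preferred, one expands $d'(\varphi\bullet\psi)$ and sorts the summands into those internal to $\varphi$ (reassembling, up to sign, to $d'(\varphi)\bullet\psi$), those internal to $\psi$ (reassembling, up to sign, to $\varphi\bullet d'(\psi)$), and the boundary summands, in which a composition in $\C C$ links the output of $\psi$ with a neighbouring $F(g)$ or one of the two outer terms of the Hochschild differential acts; all boundary summands telescope except the two extreme ones, and those two recombine — this is where the monoid structure of $M$ is used — into $F^*(\varphi)\cdot\psi$ and $(-1)^{\abs{\varphi}\abs{\psi}}\psi\cdot F^*(\varphi)$, yielding the asserted identity after multiplying through by $(-1)^{\abs{\varphi}}$.
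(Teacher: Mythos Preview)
Your proposal is correct and matches the paper's own stance: the paper omits the proof entirely as ``straightforward but rather tedious'' and explicitly identifies the lemma as a slight generalisation of \cite[(1.4)]{muro_enhanced_2020}, which is exactly the reduction you carry out. Your structural observation that $\C C(F,F)$ is a monoid in $\C D$-bimodules and that $F$ preserves degrees (so all signs transfer unchanged) is precisely what is needed to justify the verbatim transport of the computation, and your fallback self-contained expansion is the tedious verification the paper declines to write out.
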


The proof of this lemma is straightforward but rather tedious, hence we omit it.

\begin{proof}[Proof of Proposition \ref{kernel}]
	Let $\varphi\in\hc{\star,*}{\C C, \C C}$ be a representative of $x$, so $d'(\varphi)=0$. Since $F^*(x)=0$, there exists $\psi\in\hc{\star,*}{\C D, \C C(F,F)}$ with $F^*(\varphi)=d'(\psi)$.
	If $x$ has even total degree, so does $\varphi$. Therefore $\abs{\psi}$ is odd and the Leibniz rule for $d'$ together with the formula in Lemma \ref{technical} yield,
	\begin{align*}
		d'(\psi\cdot\psi)&=F^*(\varphi)\cdot\psi-\psi\cdot F^*(\varphi)\\
		&=
		d'(\varphi\bullet\psi)+\varphi\bullet F^*(\varphi).
	\end{align*}
	Hence, the cohomology class of $\varphi\bullet F^*(\varphi)=F^*(\varphi\bullet\varphi)$ is trivial. The same formula proves the result in $\characteristic k=2$, where signs do not matter.
\end{proof}

\section{The Euler derivation}

In this section we recall the Euler class in Hochschild cohomology and we study its interaction with the Gerstenhaber algebra structure.

\begin{definition}
	Given a graded category $\C C$, the  \emph{Euler derivation} $\delta\in\hc{1,0}{\C C,\C C}$ is the cochain defined as
	\[\delta(f)=\abs{f}\cdot f,\qquad f\in \C C(X_1,X_0).\]
\end{definition}

If $\C C$ is ungraded then $\delta=0$, so everything in this section pertains to the graded setting.

\begin{proposition}\label{formulilla}
	Given a Hochschild cochain $\varphi\in\hc{p,q}{\C C,\C C}$, $$[\delta,\varphi]=q\cdot \varphi.$$
\end{proposition}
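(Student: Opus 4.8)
The statement $[\delta,\varphi]=q\cdot\varphi$ for $\varphi\in\hc{p,q}{\C C,\C C}$ is a purely computational identity at the cochain level, so the plan is to expand both sides using the definition of the Gerstenhaber bracket as a commutator of the pre-Lie product $\bullet$ and then compare terms. First I would write $[\delta,\varphi]=\delta\bullet\varphi-(-1)^{(\abs{\delta}-1)(\abs{\varphi}-1)}\varphi\bullet\delta$, noting that $\abs{\delta}=1$ (horizontal degree $1$, vertical degree $0$), so the sign in front of $\varphi\bullet\delta$ is $(-1)^{0\cdot(p+q-1)}=1$, giving $[\delta,\varphi]=\delta\bullet\varphi-\varphi\bullet\delta$.

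\textbf{Computing the two pre-Lie terms.} For $\delta\bullet\varphi$, since $\delta$ has horizontal degree $p=1$, the outer sum in the pre-Lie formula has a single term $i=1$, and with $s$ the horizontal degree of $\varphi$, $t=q$, and the sum $\sum_{j=1}^{0}=0$, the sign reduces to $(-1)^{(s-1)\cdot 0 + q\cdot 0}=1$; so $(\delta\bullet\varphi)(f_1,\dots,f_s)=\delta(\varphi(f_1,\dots,f_s))=\abs{\varphi(f_1,\dots,f_s)}\cdot\varphi(f_1,\dots,f_s)$. Now $\varphi$ has vertical degree $q$, so $\abs{\varphi(f_1,\dots,f_s)}=q+\sum_{i=1}^{s}\abs{f_i}$, because applying $\varphi$ to arguments of total degree $\sum\abs{f_i}$ produces something of degree $q+\sum\abs{f_i}$. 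Hence $(\delta\bullet\varphi)(f_1,\dots,f_s)=\bigl(q+\sum_i\abs{f_i}\bigr)\varphi(f_1,\dots,f_s)$. For $\varphi\bullet\delta$, here $\delta$ is plugged in, so $s=1$, $t=0$, and the pre-Lie sign $(-1)^{(s-1)(p-i)+t(\cdots)}=(-1)^{0+0}=1$ for every $i$; thus $(\varphi\bullet\delta)(f_1,\dots,f_p)=\sum_{i=1}^{p}\varphi(f_1,\dots,\delta(f_i),\dots,f_p)=\sum_{i=1}^{p}\abs{f_i}\,\varphi(f_1,\dots,f_p)=\bigl(\sum_i\abs{f_i}\bigr)\varphi(f_1,\dots,f_p)$, using multilinearity of $\varphi$ and $\delta(f_i)=\abs{f_i}f_i$.

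\textbf{Conclusion.} Subtracting, the $\sum_i\abs{f_i}$ contributions cancel and we are left with $[\delta,\varphi](f_1,\dots)=\bigl(q+\sum_i\abs{f_i}-\sum_i\abs{f_i}\bigr)\varphi(f_1,\dots)=q\cdot\varphi(f_1,\dots)$, which is the claim. The only subtlety I anticipate is bookkeeping the operadic-suspension signs correctly: one must make sure that in the conventions fixed in Section \ref{principio} the relevant exponents $(s-1)(p-i)$ and $t(p-1+\sum_{j<i}\abs{f_j})$ really do vanish in the two special cases ($p=1$ for $\delta\bullet\varphi$, $s=1$ and $t=0$ for $\varphi\bullet\delta$) and that the commutator sign $(-1)^{(\abs{\delta}-1)(\abs{\varphi}-1)}$ is indeed $+1$ because $\abs{\delta}=1$. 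Once these are pinned down the identity is immediate, and it also follows that $[\delta,-]$ acts as multiplication by the vertical degree, i.e.\ $\delta$ is (up to the bracket) the generator of the grading, which is the structural content one should highlight.
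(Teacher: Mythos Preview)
Your proof is correct and follows exactly the same route as the paper: expand $[\delta,\varphi]=\delta\bullet\varphi-\varphi\bullet\delta$, compute each pre-Lie term directly from the definitions, and observe that the $\sum_i\abs{f_i}$ contributions cancel to leave $q\cdot\varphi$. The only difference is that you spell out the sign bookkeeping more explicitly than the paper does; there is a minor notational clash where you temporarily use $p$ for the horizontal degree of $\delta$ (in the pre-Lie formula) while the proposition reserves $p$ for that of $\varphi$, but this is harmless and the argument is sound.
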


\begin{proof}
	The result is a consequence of the following equations,
	\begin{align*}
	[\delta,\varphi]&=\delta\bullet \varphi-\varphi\bullet \delta,\\
	(\delta\bullet \varphi)(f_1,\dots,f_p)&=\delta(\varphi(f_1,\dots,f_p))\\
	&=\left(q+\sum_{i=1}^{p}\abs{f_i}\right)\cdot\varphi(f_1,\dots,f_p)\\
	\varphi\bullet \delta&=\sum_{i=1}^p\varphi(f_1,\dots,f_{i-1},\delta(f_i),f_{i+1},\dots, f_p)\\
	&=\sum_{i=1}^p\abs{f_i}\varphi(f_1,\dots, f_p).
	\end{align*}
\end{proof}

\begin{corollary}
	The Euler derivation $\delta$ is a Hochschild cocycle.
\end{corollary}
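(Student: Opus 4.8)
The statement to prove is the Corollary: the Euler derivation $\delta$ is a Hochschild cocycle, i.e.\ $d'(\delta)=0$.

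The plan is to derive this directly from Proposition \ref{formulilla}, which we have just proved, by specializing $\varphi$ to $\delta$ itself. Recall from Section \ref{principio} that the differential $d'$ on the Hochschild complex $\hc{\star,*}{\C C,\C C}$ is given by the Gerstenhaber bracket with the composition cochain, $d'=[m_2,-]$, where $m_2\in\hc{2,0}{\C C,\C C}$. The key point is that $\delta$ has vertical degree $q=0$: it lives in $\hc{1,0}{\C C,\C C}$. So first I would simply apply Proposition \ref{formulilla} with $\varphi=\delta$ to get $[\delta,\delta]=0\cdot\delta=0$.

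Then the remaining step is to connect $[\delta,\delta]=0$ with $d'(\delta)=0$. This is where I would invoke the Jacobi/Leibniz structure: since $d'=[m_2,-]$ and the bracket satisfies the graded Jacobi identity, we have $d'[\delta,\delta]=[m_2,[\delta,\delta]]$, but more usefully we can use that $[\delta,-]$ is a derivation for the bracket. Actually the cleanest route: apply Proposition \ref{formulilla} to $\varphi=m_2$, which has vertical degree $q=0$, giving $[\delta,m_2]=0$. Since $m_2$ is a cocycle ($[m_2,m_2]=0$, equivalently $d'(m_2)=0$, as composition is associative), and $d'(\delta)=[m_2,\delta]=-(-1)^{(\abs{m_2}-1)(\abs{\delta}-1)}[\delta,m_2]$ by graded antisymmetry of the bracket. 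Here $\abs{m_2}=2$ (total degree, with $q=0$) and $\abs{\delta}=1$, so the sign is $-(-1)^{1\cdot 0}=-1$, hence $d'(\delta)=-[\delta,m_2]=0$. So the corollary follows immediately.

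I would actually present it the shortest way: $d'(\delta)=[m_2,\delta]$; by Proposition \ref{formulilla} applied to $\varphi=m_2\in\hc{2,0}{\C C,\C C}$ we get $[\delta,m_2]=0\cdot m_2=0$; and by the (graded) antisymmetry of the Gerstenhaber bracket, $[m_2,\delta]=\pm[\delta,m_2]=0$. There is no real obstacle here — the only thing to be careful about is tracking the sign in the antisymmetry relation $[x,y]=-(-1)^{(\abs x-1)(\abs y-1)}[y,x]$ and confirming that $\delta$ and $m_2$ both have vertical degree $0$ so that Proposition \ref{formulilla} forces the bracket with $\delta$ to vanish; both checks are immediate from the definitions.
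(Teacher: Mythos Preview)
Your argument is correct and matches the paper's own proof: apply Proposition \ref{formulilla} to $\varphi=m_2\in\hc{2,0}{\C C,\C C}$ to get $[\delta,m_2]=0\cdot m_2=0$, then use antisymmetry of the bracket to conclude $d'(\delta)=[m_2,\delta]=-[\delta,m_2]=0$. The initial detour through $[\delta,\delta]=0$ is unnecessary, but your final presentation is exactly the paper's one-line proof.
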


\begin{proof}
	Simply note that $[m_2,\delta]=-[\delta,m_2]=0\cdot m_2=0$ since $m_2\in\hc{2,0}{\C C,\C C}$.
\end{proof}

The \emph{Euler class} is the Hochschild cohomology class of the Euler derivation 
$$\{\delta\}\in\hh{1,0}{\C C,\C C}.$$

\begin{corollary}\label{euler_bracket}
	Given $x\in \hh{*,q}{\C C,\C C}$,
	$[\{\delta\},x]=q\cdot x$.
%	Hence, if $\{\delta\}=0$ and $q\in\mathbb Z$ not divisible by $\characteristic k$ then $\hh{*,q}{\C C,\C C}=0$.
\end{corollary}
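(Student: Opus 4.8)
The plan is to deduce Corollary \ref{euler_bracket} directly from Proposition \ref{formulilla} by passing from the cochain level to cohomology. First I would pick a Hochschild cocycle representative $\varphi\in\hc{p,q}{\C C,\C C}$ of the class $x$, where $p$ is the horizontal degree, so that $q=\abs{x}_v$ is the internal (vertical) degree. Since $\delta\in\hc{1,0}{\C C,\C C}$ is itself a cocycle (by the previous corollary), the Gerstenhaber bracket $[\delta,\varphi]$ is a well-defined cocycle representing $[\{\delta\},x]$ in $\hh{*,q}{\C C,\C C}$; here one uses that the bracket descends to cohomology, which is part of the DG-Lie structure already recorded in the excerpt. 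Then Proposition \ref{formulilla} gives the cochain-level identity $[\delta,\varphi]=q\cdot\varphi$, and taking cohomology classes yields $[\{\delta\},x]=q\cdot x$.

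The only genuinely delicate point is the bookkeeping of which grading the scalar $q$ refers to: Proposition \ref{formulilla} is stated for $\varphi\in\hc{p,q}{\C C,\C C}$ and produces the vertical degree $q$, so in the corollary one must make sure $x\in\hh{*,q}{\C C,\C C}$ is taken with its vertical degree equal to $q$ — which is exactly how the statement is phrased. There is no interference from the horizontal degree because the Euler derivation has horizontal degree $1$ and vertical degree $0$, and the bracket lands in $\hh{*,q}{\C C,\C C}$ with the same vertical degree $q$. I expect no serious obstacle here; the main (very minor) thing to be careful about is simply that the bracket is well-defined on cohomology and that $\delta$ is a cocycle, both of which are already available, so the proof is essentially one line.

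\begin{proof}
	Let $\varphi\in\hc{p,q}{\C C,\C C}$ be a cocycle representing $x$, so that $q=\abs{x}_v$. By the previous corollary, $\delta$ is also a cocycle, so the bracket $[\delta,\varphi]$ represents $[\{\delta\},x]$ in Hochschild cohomology. Proposition \ref{formulilla} gives $[\delta,\varphi]=q\cdot\varphi$ at the cochain level, and passing to cohomology classes yields $[\{\delta\},x]=q\cdot x$.
\end{proof}
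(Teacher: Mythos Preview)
Your proof is correct and matches the paper's approach: the paper states this corollary without proof, as an immediate consequence of Proposition \ref{formulilla} once one knows $\delta$ is a cocycle and the bracket descends to cohomology, which is exactly what you spell out.
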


\begin{proposition}\label{euler_gsquare}
	If $\characteristic k=2$, then $\gsquare(\delta)=\delta\bullet\delta=\delta$, hence $\gsquare(\{\delta\})=\{\delta\}$.
\end{proposition}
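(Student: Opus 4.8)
The plan is to compute $\delta\bullet\delta$ directly from the defining formula for the pre-Lie product and observe that almost all terms cancel in characteristic $2$. Recall that $\delta\in\hc{1,0}{\C C,\C C}$, so in the formula for $\bullet$ we have $p=s=1$ and $q=t=0$; hence the sign $(-1)^{(s-1)(p-i)+t(p-1+\sum\abs{f_j})}$ is trivial and the sum over $i$ has a single term. Thus $(\delta\bullet\delta)(f)=\delta(\delta(f))=\delta(\abs{f}\cdot f)=\abs{f}^2\cdot f$. Over a field of characteristic $2$ we have $\abs{f}^2\equiv\abs{f}\pmod 2$ for every integer $\abs{f}$, so $\abs{f}^2\cdot f=\abs{f}\cdot f=\delta(f)$, giving $\delta\bullet\delta=\delta$ at the cochain level.

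Next I would record that $\gsquare(\delta)=\delta\bullet\delta$ by definition of the Gerstenhaber square as the square of the pre-Lie product, which is legitimate here since $\characteristic k=2$ (the parity hypothesis $p+q$ even is not needed in characteristic $2$). Combining with the previous paragraph, $\gsquare(\delta)=\delta$ as cochains. Passing to cohomology, since $\gsquare$ on $\hh{\star,*}{\C C,\C C}$ is induced by the cochain-level operation $\varphi\mapsto\varphi\bullet\varphi$ and $\delta$ is a cocycle (as shown in the corollary just above), we get $\gsquare(\{\delta\})=\{\delta\bullet\delta\}=\{\delta\}$.

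There is essentially no obstacle here: the only point requiring the slightest care is checking that the operadic-suspension signs and the sign $(-1)^{(s-1)(p-i)+t(\dots)}$ in the pre-Lie formula genuinely vanish for the degree-$(1,0)$ cochain $\delta$, which they do since $s-1=0$ and $t=0$; and that the identity $n^2\equiv n\pmod 2$ is what makes the characteristic-$2$ hypothesis essential — in any other characteristic $\delta\bullet\delta\neq\delta$ in general. I would present this as a two-line computation.

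\begin{proof}
	Since $\delta$ has horizontal degree $p=1$ and vertical degree $q=0$, the pre-Lie product $\delta\bullet\delta$ has a single summand ($i=1$) with trivial sign, so
	\[(\delta\bullet\delta)(f)=\delta(\delta(f))=\delta(\abs{f}\cdot f)=\abs{f}^2\cdot f.\]
	In characteristic $2$ we have $\abs{f}^2\equiv\abs{f}\pmod 2$, hence $(\delta\bullet\delta)(f)=\abs{f}\cdot f=\delta(f)$, i.e.~$\gsquare(\delta)=\delta\bullet\delta=\delta$ as Hochschild cochains. Since $\delta$ is a cocycle and $\gsquare$ in cohomology is induced by the cochain operation $\varphi\mapsto\varphi\bullet\varphi$, we obtain $\gsquare(\{\delta\})=\{\delta\bullet\delta\}=\{\delta\}$.
\end{proof}
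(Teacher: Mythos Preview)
Your proof is correct and follows exactly the same approach as the paper: the paper's argument is the single sentence ``It suffices to notice that any integer is congruent to its square modulo $2$,'' which is precisely your computation $(\delta\bullet\delta)(f)=\abs{f}^2\cdot f=\abs{f}\cdot f=\delta(f)$ in characteristic $2$.
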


It suffices to notice that any integer is congruent to its square modulo $2$.

\begin{proposition}\label{nulo}
	Given $y\in\hh{n,-1}{\C C,\C C}$ with $n$ odd, and $x\in \hh{p,q}{\C C,\C C}$, then
	\[y\cdot x=[y,\{\delta\}\cdot x]+\{\delta\}\cdot[y, x].\]
\end{proposition}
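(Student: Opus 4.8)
The plan is to derive this identity purely formally from the Gerstenhaber algebra relations listed in Section~\ref{principio} together with Corollary~\ref{euler_bracket}, without any computation at the cochain level. The guiding remark is that, since $n$ is odd, the total degree $\abs y=n-1$ is even while $\abs{\{\delta\}}=1+0=1$; these parities govern every sign that appears.

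First I would expand $[y,\{\delta\}\cdot x]$ using the derivation property of the Gerstenhaber bracket, i.e.~the relation $[u,v\cdot w]=[u,v]\cdot w+(-1)^{(\abs u-1)\abs v}v\cdot[u,w]$, applied with $u=y$, $v=\{\delta\}$, $w=x$. Here $\abs y-1=n-2$ is odd and $\abs{\{\delta\}}=1$, so the sign $(-1)^{(\abs y-1)\abs{\{\delta\}}}$ is $-1$, giving
\[[y,\{\delta\}\cdot x]=[y,\{\delta\}]\cdot x-\{\delta\}\cdot[y,x].\]
Next I would evaluate $[y,\{\delta\}]$: by graded antisymmetry, $[y,\{\delta\}]=-(-1)^{(\abs y-1)(\abs{\{\delta\}}-1)}[\{\delta\},y]=-[\{\delta\},y]$ since $\abs{\{\delta\}}-1=0$, and Corollary~\ref{euler_bracket} applied to $y\in\hh{n,-1}{\C C,\C C}$ yields $[\{\delta\},y]=(-1)\cdot y=-y$. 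Hence $[y,\{\delta\}]=y$. Substituting back gives $[y,\{\delta\}\cdot x]=y\cdot x-\{\delta\}\cdot[y,x]$, which rearranges to the claimed formula $y\cdot x=[y,\{\delta\}\cdot x]+\{\delta\}\cdot[y,x]$. When $\characteristic k=2$ the same chain of equalities holds with all signs trivial.

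There is no genuine obstacle here: the argument is entirely formal and the only nontrivial input is Corollary~\ref{euler_bracket}, namely that bracketing with the Euler class $\{\delta\}$ multiplies by the vertical degree. The sole point that needs care is the parity bookkeeping, i.e.~checking that the oddness of $n$ forces the sign in the Leibniz expansion to be $-1$ and that the vertical degree $-1$ of $y$ produces the sign needed to turn $[y,\{\delta\}]$ into $+y$.
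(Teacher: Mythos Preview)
Your proof is correct and follows exactly the same route as the paper's: expand $[y,\{\delta\}\cdot x]$ via the Leibniz rule, compute $[y,\{\delta\}]=-[\{\delta\},y]=y$ from Corollary~\ref{euler_bracket}, and rearrange. The paper's proof is simply a more compressed version of what you wrote, omitting the explicit sign bookkeeping you spelled out.
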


\begin{proof}
	Using Proposition \ref{formulilla} and the laws of a Gerstenhaber algebra,
	\begin{align*}
	[y,\{\delta\}]&=-[\{\delta\},y]&[y,\{\delta\}\cdot x]&=[y,\{\delta\}]\cdot x-\{\delta\}\cdot[y, x]\\
	&=y,&	&=y\cdot x-\{\delta\}\cdot[y, x].
	\end{align*}
\end{proof}

\begin{remark}\label{nulo2}
	The formula in the previous proposition shows that  $\{\delta\}\cdot-$  is a chain null-homotopy for $y\cdot -$, if we think of $[y,-]$ as a differential. If $\gsquare(y)=0$, then $[y,-]$ is a differential and multiplication by $y$ is a chain map since
	\begin{align*}
	[y,[y,x]]&=[\gsquare(y),x]=0,\\
	[y,y\cdot x]&=[y,y]\cdot x+y\cdot[y,x]=2\cdot\gsquare(y)\cdot x+y\cdot[y,x]=y\cdot[y,x].
	\end{align*}
	This observation is crucial for the proof of Theorem \ref{classification}.
\end{remark}

\begin{proposition}\label{vaa0}
	The square of the Euler class vanishes,\[\{\delta\}^2=0\in\hh{2,0}{\C C,\C C}.\]
\end{proposition}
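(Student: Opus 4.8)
The plan is to produce, at the cochain level, an explicit primitive of the cup-square $\delta\smile\delta$. Since $\delta$ is a Hochschild cocycle (shown just above, as a corollary of Proposition~\ref{formulilla}) and the cup product is a chain map, the cochain $\delta\smile\delta\in\hc{2,0}{\C C,\C C}$ is a cocycle representing $\{\delta\}^2$, so it suffices to exhibit $\eta\in\hc{1,0}{\C C,\C C}$ with $d(\eta)=\pm\,\delta\smile\delta$. When $\characteristic k\neq 2$ there is nothing to do: the class $\{\delta\}$ has odd total degree $1$, so graded commutativity of the cup product with respect to the total degree gives $\{\delta\}^2=-\{\delta\}^2$, hence $2\{\delta\}^2=0$ and $\{\delta\}^2=0$. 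The real content of the statement is therefore the case $\characteristic k=2$, and the cochain-level argument below handles all characteristics at once.

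First I would unwind the definitions: for homogeneous $f_1\in\C C(X_1,X_0)$ and $f_2\in\C C(X_2,X_1)$ one has $(\delta\smile\delta)(f_1,f_2)=\abs{f_1}\abs{f_2}\,f_1f_2$, the Koszul factor $(-1)^{t\abs{f_1}}$ in the definition of $\smile$ being trivial because the second copy of $\delta$ has vertical degree $t=0$ (and for the same reason $d=d'$ on the cochains in play). Then I would set $\eta(f)=\binom{\abs f}{2}f$ on homogeneous $f$, extended linearly; this is a legitimate cochain over any field $k$ because $\binom{n}{2}\in\mathbb Z$ for every integer $n$. Using the Hochschild differential on $\hc{1,0}{\C C,\C C}$ together with the elementary identity $\binom{a+b}{2}=\binom a2+\binom b2+ab$, valid for all integers $a,b$, a short computation yields
\[
d(\eta)(f_1,f_2)=f_1\eta(f_2)-\eta(f_1f_2)+\eta(f_1)f_2=\Big(\tbinom{\abs{f_2}}{2}-\tbinom{\abs{f_1}+\abs{f_2}}{2}+\tbinom{\abs{f_1}}{2}\Big)f_1f_2=-\abs{f_1}\abs{f_2}\,f_1f_2,
\]
that is, $d(\eta)=-\,\delta\smile\delta$. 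Hence $\delta\smile\delta$ is a coboundary and $\{\delta\}^2=0$ in $\hh{2,0}{\C C,\C C}$.

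I do not expect a serious obstacle. The only point worth flagging is that graded commutativity is unavailable in characteristic $2$, which is precisely why the primitive must have genuinely integral coefficients: one cannot divide $\abs f^2$ by $2$, and the binomial coefficient $\binom{\abs f}{2}$ is exactly the right integral replacement for $-\tfrac12\abs f^2$. The rest is the routine sign and degree bookkeeping indicated above.
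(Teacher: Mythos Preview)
Your proof is correct and essentially identical to the paper's. The paper exhibits the primitive $\beta(f)=\frac{\abs{f}(1-\abs{f})}{2}\,f$ and checks $d(\beta)=\delta\cdot\delta$; your $\eta(f)=\binom{\abs{f}}{2}\,f$ is precisely $-\beta$, and accordingly you find $d(\eta)=-\,\delta\smile\delta$. The extra remark that graded commutativity already settles the case $\characteristic k\neq 2$ is a nice observation, though not needed once the explicit primitive is in hand.
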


\begin{proof}
	Consider the cochain $\beta\in\hc{1,0}{\C C,\C C}$ defined by
	\[\beta(f)=\frac{\abs{f}(1-\abs{f})}{2}\cdot f,\qquad f\in\C C(X_1,X_0).\]
	On the one hand,
	\begin{align*}
	(\delta\cdot\delta)(f_1,f_2)={}&\delta(f_1)\cdot \delta(f_2)\\
	={}&\abs{f_1}\abs{f_2}f_1\cdot f_2.
	\end{align*}
	On the other hand,
	\begin{multline*}
	d(\beta)(f_1,f_2)=f_1\cdot\beta(f_2)-\beta(f_1\cdot f_2)+\beta(f_1)\cdot f_2\\
	=\left(\frac{\abs{f_1}(1-\abs{f_1})}{2}-\frac{(\abs{f_1}+\abs{f_2})(1-(\abs{f_1}+\abs{f_2}))}{2}
	+\frac{\abs{f_2}(1-\abs{f_2})}{2}\right)f_1\cdot f_2\\
	=\abs{f_1}\abs{f_2}f_1\cdot f_2.
	\end{multline*}
	Hence, $d(\beta)=\delta^2$.
\end{proof}

\begin{proposition}\label{euler_sub_lie}
	Given $x\in\hh{p,q}{\C C,\C C}$ and $y\in\hh{s,t}{\C C,\C C}$,
	\[[\{\delta\}\cdot x,\{\delta\}\cdot y]=(t-q)\cdot \{\delta\}\cdot x\cdot y.\]
	In particular, if $\characteristic k\neq 2$ and $p+q$ is odd then
	\[\gsquare(\{\delta\}\cdot x)=0.\]
\end{proposition}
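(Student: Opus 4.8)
The plan is to compute the bracket $[\{\delta\}\cdot x,\{\delta\}\cdot y]$ by repeatedly expanding the Leibniz rule for the Gerstenhaber bracket over the cup product, and then use Corollary~\ref{euler_bracket} to turn each bracket with $\{\delta\}$ into a scalar multiplication, together with Proposition~\ref{vaa0} to kill the terms in which $\{\delta\}^2$ appears. Concretely, write $u=\{\delta\}\cdot x$, which has vertical degree $q$ and horizontal degree $p+1$, so $|u|=p+q+1$; similarly $|{\{\delta\}\cdot y}|=s+t+1$. First I would expand
\[
[\{\delta\}\cdot x,\{\delta\}\cdot y]=[\{\delta\},\{\delta\}\cdot x\cdot y]\cdot(\pm)+\cdots
\]
more carefully: apply the derivation property of $[-,\{\delta\}\cdot y]$ to the product $\{\delta\}\cdot x$, then apply the derivation property of $[\{\delta\},-]$ to $\{\delta\}\cdot y$ inside the resulting brackets. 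One gets a sum of four terms, each of which is (a sign times a scalar times) a cup product of $\{\delta\}$, $\{\delta\}$, $x$, $y$ with at most one genuine bracket $[\{\delta\},x]$ or $[\{\delta\},y]$ remaining; by Corollary~\ref{euler_bracket} those remaining brackets are $q\cdot x$ and $t\cdot y$. The two terms that retain a factor $\{\delta\}\cdot\{\delta\}$ vanish by Proposition~\ref{vaa0}, and what survives is $(t-q)\cdot\{\delta\}\cdot x\cdot y$ after collecting signs.

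The main obstacle is bookkeeping the Koszul signs, since both the horizontal and vertical degrees contribute and the Gerstenhaber bracket is a degree $-1$ operation in the total degree; a sign slip here would be easy. The clean way to organise this is to use the total-degree-graded reformulation already available in the excerpt (the dot product and $d'$), so that $[-,-]$ is a plain graded Lie bracket of degree $-1$ satisfying an honest graded Leibniz rule over the graded-commutative dot product, and then translate back at the end. With that convention, $[\{\delta\}\cdot x,-]$ and $[-,\{\delta\}\cdot y]$ are graded derivations, and the expansion is forced. I would double-check the final coefficient on the small case $x=y=1$ (where it must give $0$, consistent with $[\{\delta\},\{\delta\}]=2\gsquare(\{\delta\})$ and $\gsquare(\{\delta\})=\{\delta\}$ in characteristic $2$ but with the $(t-q)$ factor vanishing) and on the case $x=1$, $y$ arbitrary, where it should reduce to $[\{\delta\},\{\delta\}\cdot y]=t\cdot\{\delta\}\cdot y$, which indeed follows from Corollary~\ref{euler_bracket} and Proposition~\ref{vaa0}.

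For the ``in particular'' clause, suppose $\characteristic k\neq 2$ and $p+q$ is odd. Then $|{\{\delta\}\cdot x}|=p+q+1$ is even, so $\gsquare(\{\delta\}\cdot x)$ is defined, and the relation $2\cdot\gsquare(z)=[z,z]$ from the Gerstenhaber algebra laws applies with $z=\{\delta\}\cdot x$. Taking $y=x$ (so $s=p$, $t=q$) in the formula just proved gives $[\{\delta\}\cdot x,\{\delta\}\cdot x]=(q-q)\cdot\{\delta\}\cdot x^2=0$, hence $2\cdot\gsquare(\{\delta\}\cdot x)=0$, and since $\characteristic k\neq 2$ we conclude $\gsquare(\{\delta\}\cdot x)=0$. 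This last step is immediate once the bracket formula is in hand, so essentially all the work is the signed Leibniz expansion above.
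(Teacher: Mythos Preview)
Your proposal is correct and follows essentially the same route as the paper: a Leibniz expansion of $[\{\delta\}\cdot x,\{\delta\}\cdot y]$, then Corollary~\ref{euler_bracket} to turn the surviving brackets with $\{\delta\}$ into scalars, and Proposition~\ref{vaa0} (together with $[\{\delta\},\{\delta\}]=0$, which holds since $|\{\delta\}|$ is odd) to kill the remaining terms. The paper's execution is slightly more economical---it expands the second factor first and applies Corollary~\ref{euler_bracket} directly to $[\{\delta\}\cdot x,\{\delta\}]=-q\,\{\delta\}\cdot x$ without a second Leibniz step there, yielding three rather than four terms---but the ingredients and the final bookkeeping are the same, and your derivation of the ``in particular'' clause from $2\gsquare(z)=[z,z]$ is exactly what is intended.
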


\begin{proof}
	The following equations, which are consequences of the laws of a Gerstenhaber algebra and Propositions \ref{formulilla} and \ref{vaa0}, prove the claim,
	\begin{align*}
		[\{\delta\}\cdot x,\{\delta\}\cdot y]={}&[\{\delta\}\cdot x,\{\delta\}]\cdot y+(-1)^{p+q}\{\delta\}\cdot[\{\delta\}\cdot x,y]\\={}&-q\{\delta\}\cdot x\cdot y
		+(-1)^{p+q}\{\delta\}\cdot\{\delta\}\cdot [x,y]\\&
		+(-1)^{p+q+(p+q)(s+t-1)}\{\delta\}\cdot[\{\delta\},y]\cdot x\\={}&-q\{\delta\}\cdot x\cdot y
		+(-1)^{(p+q)(s+t)}t\{\delta\}\cdot y\cdot x\\={}&-q\{\delta\}\cdot x\cdot y
		+t\{\delta\}\cdot x\cdot y.
	\end{align*}
\end{proof}

\begin{proposition}\label{euler_gsquare_2}
	If $\characteristic k=2$ and $x\in\hh{\star,q}{\C C,\C C}$ then \[\gsquare(\{\delta\}\cdot x)=(q+1)\cdot \{\delta\}\cdot x^2.\]
\end{proposition}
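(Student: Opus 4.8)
The plan is to expand $\gsquare(\{\delta\}\cdot x)$ by means of the product formula for the Gerstenhaber square, which in characteristic $2$ holds unconditionally:
\[\gsquare(a\cdot b)=\gsquare(a)\cdot b^2+a\cdot[a,b]\cdot b+a^2\cdot\gsquare(b).\]
I would apply it with $a=\{\delta\}$ and $b=x$, producing three summands, and then simplify each of them using results already available in this section.

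For the three terms: Proposition \ref{euler_gsquare} gives $\gsquare(\{\delta\})=\{\delta\}$, so the first summand is $\{\delta\}\cdot x^2$; Corollary \ref{euler_bracket} gives $[\{\delta\},x]=q\cdot x$ since $x$ lies in vertical degree $q$, so by associativity and centrality of scalars the middle summand equals $\{\delta\}\cdot(q\cdot x)\cdot x=q\cdot\{\delta\}\cdot x^2$; and Proposition \ref{vaa0} gives $\{\delta\}^2=0$, so the third summand vanishes. Adding, $\gsquare(\{\delta\}\cdot x)=\{\delta\}\cdot x^2+q\cdot\{\delta\}\cdot x^2=(q+1)\cdot\{\delta\}\cdot x^2$, as claimed.

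There is essentially no obstacle here; the argument is pure bookkeeping once the product formula for $\gsquare$ is in hand, together with the three cited facts about $\{\delta\}$. The only point worth noting is that in characteristic $2$ the Gerstenhaber square of any class, the square $x^2$, and the product formula itself are all defined without any parity assumption on the degrees, so no extra hypotheses are needed.
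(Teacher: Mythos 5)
Your proposal is correct and is essentially identical to the paper's own proof: both expand $\gsquare(\{\delta\}\cdot x)$ via the relation $\gsquare(a\cdot b)=\gsquare(a)\cdot b^2+a\cdot[a,b]\cdot b+a^2\cdot\gsquare(b)$ (valid unconditionally in characteristic $2$) and then apply Proposition \ref{euler_gsquare}, Corollary \ref{euler_bracket}, and Proposition \ref{vaa0} to the three summands. Nothing is missing.
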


\begin{proof}
	Using the Gerstenhaber algebra relations together with Propositions \ref{euler_gsquare} and \ref{vaa0} and Corollary \ref{euler_bracket} we obtain
	\begin{align*}
	\gsquare(\{\delta\}\cdot x)&=\gsquare(\{\delta\})\cdot x^2+\{\delta\}\cdot [\{\delta\},x]\cdot x+\{\delta\}^2\cdot\gsquare(x)\\
	&=\{\delta\}\cdot x^2+\{\delta\}\cdot (q\cdot x)\cdot x\\
	&=(q+1)\cdot \{\delta\}\cdot x^2.
	\end{align*}
\end{proof}

\section{Hochschild cohomology of weakly stable graded categories}\label{weakly_stable}

Given an ungraded category $\C T$ equipped with an automorphism $\Sigma\colon\C T\rightarrow\C T$, we can form a graded category $\C T_\Sigma$ with the same object set, morphism objects given by
\[\C T_\Sigma^n(X,Y)=\C T(X,\Sigma^n Y),\qquad n\in\mathbb Z,\]
and composition
\begin{align*}
\C T_{\Sigma}^p(Y,Z)\otimes \C T_{\Sigma}^q(X,Y)&\To\C T_\Sigma^{p+q}(X,Z),\\ 
f\otimes g&\;\mapsto\; (\Sigma^qf)g.
\end{align*}
Here, on the right, we have a composition in $\C T$. The degree $0$ part of $\C T_\Sigma$ is precisely $\C T$, and we denote the inclusion by $i\colon\C T\subset\C T_\Sigma$. This can also be done if $\Sigma$ is just a self equivalence, after choosing an adjoint inverse.

We can extend $\Sigma$ to an automorphism $\Sigma \colon\C T_\Sigma\rightarrow\C T_\Sigma$, defined as in $\C T$ on objects, and on morphisms as $(-1)^n\Sigma$ in each degree $n\in\mathbb Z$. In this way, the graded $\Sigma$ is equipped with a natural isomorphism 
\begin{equation}\label{imath}
\imath_X\colon X\cong \Sigma X
\end{equation} of degree $-1$ given by the identity in $X$. The sign in the extension of $\Sigma$ is necessary for graded naturality, because of Koszul's sign rule. Graded categories equivalent to some $\C T_\Sigma$ are called \emph{weakly stable} \cite{street_homotopy_1969}. They are characterized by the fact that shifts of representable functors are representable, or equivalently, each object has an isomorphism of any given degree, i.e.~for any object $X$ and any $n\in\mathbb{Z}$ there exists an object $Y$ and an isomorphism $X\rightarrow Y$ of degree $n$.

In the special case of an ungraded algebra $\Lambda$ equipped with an automorphism $\sigma\colon\Lambda\rightarrow\Lambda$, the graded algebra given by the previous construction will rather be denoted by $\Lambda(\sigma)$, so as not to confuse it with a twisted bimodule. It can be described as
\[\Lambda(\sigma)=\frac{\Lambda\langle\imath^{\pm1}\rangle}{(\imath x-\sigma(x)\imath)_{x\in\Lambda}},\qquad\abs{\imath}=-1.\]
The extension of $\sigma$ to $\Lambda(\sigma)$ is given by $\sigma(\imath)=-\imath$. The degree $0$ part of $\Lambda(\sigma)$ is $\Lambda$, and the degree $n$ part is isomorphic to the twisted $\Lambda$-bimodule ${}_{\sigma^n}\Lambda_{1}$. Recall that, given two automorphisms $\phi,\psi\colon\Lambda\rightarrow\Lambda$ and a $\Lambda$-bimodule $M$, the \emph{twisted $\Lambda$-bimodule} ${}_\phi M_\psi$ has underlying $k$-vector space $M$ and bimodule structure $\cdot$ given by $a\cdot x\cdot b=\phi(a)x\psi(b)$. Here, $a,b\in\Lambda$, $x\in M$, and the product on the right of the equation is given by the $\Lambda$-bimodule structure of $M$. Up to isomorphism, $\Lambda(\sigma)$ only depends on the class of  $\sigma$ in the outer automorphism group $\out(\Lambda)$ of $\Lambda$. It is worth to notice that the twisted $\Lambda$-bimodule ${}_\phi\Lambda_\psi$ is isomorphic to $\Lambda$ both as a left and as a right $\Lambda$-module, but in general not as a bimodule.

\begin{remark}\label{signos}
	The isomorphism between the degree $n$ part of $\Lambda(\sigma)$ and ${}_{\sigma^n}\Lambda_{1}$ is given by the generator $\imath^{-n}$. Therefore, $\sigma\colon \Lambda(\sigma)\rightarrow\Lambda(\sigma)$ in degree $n$ corresponds to the $\Lambda$-bimodule isomorphism $(-1)^n\sigma\colon {}_{\sigma^n}\Lambda_{1}\rightarrow {}_{\sigma^{n+1}}\Lambda_{\sigma}$.
\end{remark}

If $\C T$ is finite and $\Lambda$ is the endomorphism algebra of a basic additive generator $X\in\C T$, then $\Lambda(\sigma)$ is the endomorphism algebra of $X$ in $\C T_\Sigma$ for $\sigma$ an automorphism induced by $\Sigma$. Here we use that $\Sigma$ preserves the basic additive generator up to isomorphism since $\Sigma$ is an equivalence and the basic additive generator is a direct sum of one representative for each isomorphism class of indecomposables in $\C T$.   The class of the automorphism $\sigma$ in the outer automorphism group is well defined.

In the following statement we use shifted modules over a graded algebra. If $A^*$ is a graded algebra and $M^*$ is a left $A^*$-module, then the shifted object $M^{*-1}$ is a left $A^*$-module, and the product $a m$ in $M^{*-1}$ is defined as $(-1)^{\abs{a}}a m$ in $M^*$. If $M^*$ is a right $A^*$-module, then there is no sign twisting in the definition of the right action of $A^*$ on $M^{*-1}$.

\begin{proposition}\label{graded_ungraded_long_exact_sequence}
	Let $i\colon\C T\subset\C T_{\Sigma}$ be the inclusion of the degree $0$ part. Then there is a long exact sequence
	\begin{center}
		\begin{tikzcd}[row sep=2mm]
			\cdots\arrow[r]&
			\hh{n,*}{\C T_{\Sigma},\C T_{\Sigma}}\arrow[r, "i^*"]\arrow[d, phantom, ""{coordinate, name=Z}]&
			\hh{n,*}{\C T,\C T_{\Sigma}}\arrow[r, "\operatorname{id}-\Sigma_*^{-1}\Sigma^*"]&[20pt]
			\hh{n,*}{\C T,\C T_{\Sigma}}\arrow[dll,"\partial",
			rounded corners,
			to path={ -- ([xshift=2ex]\tikztostart.east)
				|- (Z) [near end]\tikztonodes
				-| ([xshift=-2ex]\tikztotarget.west)
				-- (\tikztotarget)}]\\
			&\hh{n+1,*}{\C T_{\Sigma},\C T_{\Sigma}}\arrow[r]&
			\cdots
		\end{tikzcd}
	\end{center}
	where $i^*$ and $\Sigma^{-1}_*\Sigma^*$	are graded algebra morphisms, the map $\partial\colon \hh{\star-1,*}{\C T,\C T_{\Sigma}}\rightarrow\hh{\star,*}{\C T_{\Sigma},\C T_{\Sigma}}$
	is an $\hh{\star,*}{\C T_{\Sigma},\C T_{\Sigma}}$-bimodule morphism whose image is a square-zero ideal, and the composite $\partial i^*$ is left multiplication by $\{\delta\}$.
\end{proposition}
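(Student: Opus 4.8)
The plan is to build the long exact sequence from a short exact sequence of $\C T_\Sigma$-bimodules (or equivalently, a short exact sequence at the level of resolutions/cochain complexes) and then identify the connecting map. First I would produce a short exact sequence of $\C T_\Sigma$-bimodules
\[
0\To \C T_\Sigma\To \C T_\Sigma\otimes_{\C T}\C T_\Sigma\To \Sigma^{-1}(\C T_\Sigma)\To 0,
\]
or rather the analogous statement realizing $i_!i^*$ on bimodules, where the quotient is a shift of $\C T_\Sigma$ as a bimodule. The point is that $\C T\subset\C T_\Sigma$ is a ``Galois-type'' extension by the infinite cyclic group generated by $\imath$, so $\C T_\Sigma$ as a $\C T_\Sigma^\env$-module has a two-step resolution by induced-from-$\C T$ bimodules; this is where the shift $M^{*-1}$ in the statement of the proposition comes from (it is the internal-degree shift caused by $\imath$ having degree $-1$). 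Applying $\hh{\star,*}{\C T_\Sigma,-}$ — equivalently $\ext_{\C T_\Sigma^\env}(\C T_\Sigma,-)$ — to this short exact sequence and using the change-of-rings/adjunction isomorphism
\[
\hh{\star,*}{\C T_\Sigma,\, \C T_\Sigma\otimes_{\C T}M}\;\cong\;\hh{\star,*}{\C T,M|_{\C T}},
\]
which is just the Shapiro-type lemma for the bimodule induction along $i$, yields the desired six-term (hence long) exact sequence with the middle terms $\hh{n,*}{\C T,\C T_\Sigma}$. The identification of the middle map as $\id-\Sigma_*^{-1}\Sigma^*$ comes from tracking how the two face maps of the bar-type resolution (insertion of $1$ versus insertion of $\imath$, which conjugates by $\sigma$ and contributes a sign) act after restriction; the sign $(-1)^n$ in the extension of $\Sigma$ (Remark \ref{signos}) is exactly what turns the ``$\imath$-insertion'' map into $\Sigma_*^{-1}\Sigma^*$ rather than something off by signs.

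Next I would verify the algebra/module structure claims. That $i^*$ is a graded algebra morphism is immediate from the cochain-level compatibility of $F^*$ with cup products recorded in Section \ref{principio}; likewise $\Sigma^{-1}_*\Sigma^*$ is a composite of a $\tau_*$-type map and an $F^*$-type map, both multiplicative, and on the relevant summand it is a ring endomorphism. For $\partial$, the standard fact that a connecting homomorphism in $\ext$ arising from a short exact sequence of bimodules is a map of $\hh{\star,*}{\C T_\Sigma,\C T_\Sigma}$-bimodules is formal (it is induced by a distinguished triangle in the derived category of $\C T_\Sigma^\env$-modules, and cup product with $\hh{\star,*}{\C T_\Sigma,\C T_\Sigma}$ is a morphism of triangles). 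The image being a square-zero ideal follows because $\partial$ factors through $\hh{\star,*}{\C T,\C T_\Sigma}$, and the composite of two connecting maps $\partial\circ(\text{something})\circ\partial$ lands in the composite of two connecting maps for the same short exact sequence, which is zero; more concretely, $\partial(a)\cdot\partial(b)=\partial(a\cdot i^*\partial(b))$ and $i^*\partial=0$ by exactness, so $\partial(a)\cdot\partial(b)=0$.

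The remaining, and I expect the most delicate, point is the identification $\partial i^*=\{\delta\}\cdot(-)$, i.e.\ that going around the boundary of the sequence is left multiplication by the Euler class. The natural approach is to compute $\partial i^*(1)$ where $1\in\hh{0,*}{\C T_\Sigma,\C T_\Sigma}$ is the identity, show it equals $\{\delta\}$, and then invoke the $\hh{\star,*}{\C T_\Sigma,\C T_\Sigma}$-bimodule property of $\partial$ (together with multiplicativity of $i^*$) to conclude $\partial i^*(x)=\partial(i^*(x)\cdot 1)=\partial(1)\cdot' x =\{\delta\}\cdot x$ — being careful that this requires $\partial$ to be a bimodule map over the image of $i^*$, which it is. To compute $\partial i^*(1)$ one chases the identity cocycle through the snake: lift it along the surjection $\C T_\Sigma\otimes_{\C T}\C T_\Sigma\To\Sigma^{-1}(\C T_\Sigma)$, apply the bimodule differential, and read off the resulting class in $\hh{1,0}{\C T_\Sigma,\C T_\Sigma}$. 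The outcome should be precisely the cochain $f\mapsto |f|\cdot f$, because the obstruction to splitting the $\imath$-extension $\C T_\Sigma$ over $\C T$ is measured, to first order, by the derivation that records the $\imath$-degree — and that derivation is the Euler derivation $\delta$ by definition. The main obstacle is therefore bookkeeping: carrying out this connecting-map computation explicitly on the bar resolution of $\C T_\Sigma$ and keeping every Koszul sign (from the degree of $\imath$, from the operadic-suspension conventions of Section \ref{principio}, and from the sign twist $(-1)^n$ in the graded extension of $\Sigma$) consistent, so that the final cocycle is $\delta$ on the nose rather than $\pm\delta$ or $\delta$ plus a coboundary. Once $\partial i^*(1)=\{\delta\}$ is pinned down, the rest is formal.
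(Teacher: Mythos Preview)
Your strategy is correct and conceptually aligned with the paper's, but the execution differs. The two-step resolution you sketch is indeed what underlies the paper's argument (it cites \cite{muro_first_2019} for precisely this resolution of $\C T_\Sigma$), though your short exact sequence is written slightly askew: the multiplication map $\C T_\Sigma\otimes_{\C T}\C T_\Sigma\to\C T_\Sigma$ has $\C T_\Sigma$ as \emph{cokernel} and a shifted copy of $\C T_\Sigma\otimes_{\C T}\C T_\Sigma$ as kernel, and one applies $\hom_{\C T_\Sigma^\env}(-,\C T_\Sigma)$ to this resolution (first variable), not $\hh{\star,*}{\C T_\Sigma,-}$ to a coefficient sequence. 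The substantive difference is in how the multiplicative claims are established. Rather than invoke formal properties of connecting maps, the paper builds an explicit semi-direct product DGA structure on the mapping cone $\hc{\star,*}{\C T,\Sigma}=\hc{\star,*}{\C T,\C T_\Sigma}\oplus\hc{\star-1,*}{\C T,\C T_\Sigma}$ and an explicit DGA quasi-isomorphism $\hc{\star,*}{\C T_\Sigma,\C T_\Sigma}\to\hc{\star,*}{\C T,\Sigma}$, $\varphi\mapsto(i^*\varphi,h(\varphi))$, via a concrete null-homotopy $h$ satisfying a twisted Leibniz rule; the bimodule property of $\partial$ and the square-zero claim are then read off from the semi-direct product formula, and $\partial i^*=\{\delta\}\cdot(-)$ drops out because $i^*(\delta)=0$, $h(\delta)=-1$, and $(0,-1)\cdot(\varphi,\psi)=(0,-\varphi)$ in the semi-direct product. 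Your formal arguments---connecting map as bimodule map via triangulated naturality, square-zero via $i^*\partial=0$, and $\partial i^*(x)=\partial(i^*(1))\cdot x$ reducing to a single snake-lemma computation---would also work and are arguably more conceptual (the square-zero argument is especially clean). What the paper's explicit cochain model buys is that the delicate sign bookkeeping you anticipate for $\partial i^*(1)=\{\delta\}$ is replaced by the single one-line evaluation $h(\delta)(X)=\imath_X^{-1}\delta(\imath_X)=-1$.
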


\begin{proof}
	The maps $i^*$, $\Sigma_*$, and $\Sigma^*$ are graded algebra morphisms by functoriality, actually differential graded algebra morphisms on cochains. The long exact sequence was constructed in \cite[Proposition 2.3]{muro_first_2019}, where we show that
		\begin{center}
		\begin{tikzcd}[column sep=15mm]
		\hc{\star,*}{\C T_{\Sigma},\C T_{\Sigma}}\arrow[r, "i^*"]&
		\hc{\star,*}{\C T,\C T_{\Sigma}}\arrow[r, "\operatorname{id}-\Sigma_*^{-1}\Sigma^*"]&
		\hc{\star,*}{\C T,\C T_{\Sigma}}
		\end{tikzcd}
	\end{center}
	can be extended to an exact triangle of complexes. 
	An explicit null-homotopy for $(\operatorname{id}-\Sigma_*^{-1}\Sigma^*)i^*$ is given by
	\[h\colon \hc{n,*}{\C T_{\Sigma},\C T_{\Sigma}}\longrightarrow \hc{n-1,*}{\C T,\C T_{\Sigma}},\]
	\begin{align*}
	h(\varphi)(f_1,\dots,f_{n-1})&=\sum_{i=0}^{n-1}(-1)^i\imath_{X_0}^{-1}\varphi(\Sigma f_1,\dots,\Sigma f_i,\imath_{X_i},f_{i+1},\dots, f_{n-1}).
	\end{align*}
	The equation 
	\[(\id{}-\Sigma_*^{-1}\Sigma^*)i^*=d'h+hd'\]
	follows easily from the definitions and the naturality of $\imath$. 
	
	The standard exact triangle completion of $\operatorname{id}-\Sigma_*^{-1}\Sigma^*$, that we will denote by $\hc{\star,*}{\C T,\Sigma}$, is
	\[\hc{\star,*}{\C T,\C T_{\Sigma}}\oplus\hc{\star-1,*}{\C T,\C T_{\Sigma}}\]
	endowed with the differential 
	\[
	\left(\begin{array}{cc}
	d'&0\\
	\operatorname{id}-\Sigma_*^{-1}\Sigma^*&-d'
	\end{array}\right).
	\]
	This is the desuspension of the mapping cone of $\operatorname{id}-\Sigma_*^{-1}\Sigma^*$. 
	The previous null-homotopy defines an explicit quasi-isomorphism
	\begin{equation}\label{dga-quasi-iso}
	\begin{split}
	\hc{\star,*}{\C T_\Sigma,\C T_\Sigma}&\longrightarrow \hc{\star,*}{\C T,\Sigma},\\
	\varphi&\;\mapsto\;(i^*(\varphi),h(\varphi)).
	\end{split}
	\end{equation}
	Indeed, $\hc{\star,*}{\C T,\Sigma}$ can be obtained by applying $\hom^*_{\C T_\Sigma^\env}(-,\C T_\Sigma)$ to the resolution of $\C T_\Sigma$ constructed in the proof of \cite[Proposition 2.3]{muro_first_2019}, in the same way as $\hc{\star,*}{\C T_\Sigma,\C T_\Sigma}$ is built from the bar resolution, and \eqref{dga-quasi-iso} is defined by a map of resolutions.

	The complex $\hc{\star,*}{\C T,\Sigma}$ has a semi-direct product differential graded algebra structure given by
	\[(\varphi,\varphi')\cdot(\psi,\psi')=(\varphi\cdot\psi,\varphi'\cdot\psi+(-1)^{\abs{\varphi}}\Sigma^{-1}_*\Sigma^*(\varphi)\cdot\psi').\]
	This is indeed the very definition of the semi-direct product of the graded algebra $\hc{\star,*}{\C T,\C T_{\Sigma}}$ and the twisted and then shifted $\hc{\star,*}{\C T,\C T_{\Sigma}}$-bimodule \[{}_{\Sigma_*^{-1}\Sigma^*}\hc{\star-1,*}{\C T,\C T_{\Sigma}}_1.\] It is straightforward to check that the differential of $\hc{\star,*}{\C T,\Sigma}$ satisfies the Leibniz rule with respect to this product. % It is absolutely natural to consider a semi-direct product structure on $\hc{\star,*}{\C T,\Sigma}$ for it is the direct sum of an algebra and a bimodule. The left twisting of the bimodule is maybe unexpected. The following formula explains it.

	The quasi-isomorphism \eqref{dga-quasi-iso} is also a differential graded algebra map since the null-homotopy $h$ satisfies the following kind of Leibniz rule twisted by $\Sigma^{-1}_*\Sigma^*$,
	\begin{align*}
		h(\varphi\cdot\psi)&=h(\varphi)\cdot i^*\psi+(-1)^{\abs{\varphi}}\Sigma^{-1}_*\Sigma^*i^*(\varphi)\cdot h(\psi).
	\end{align*}
	This is a tedious but straightforward checking.

% 	{\color{blue} COMMENTED COMPUTATION

% The computation	
	
% 	\begin{multline*}
% 		h(\varphi\cdot\psi)(f_1,\dots, f_{p+s-1})\\=\sum_{i=0}^{p+s-1}(-1)^i\imath_{X_0}^{-1}(\varphi\cdot\psi)(\Sigma f_1,\dots,\Sigma f_i,\imath_{X_i},f_{i+1},\dots, f_{p+s-1})
% 		\\=\sum_{i=0}^{p-1}(-1)^i\imath_{X_0}^{-1}(\varphi\cdot\psi)(\Sigma f_1,\dots,\Sigma f_i,\imath_{X_i},f_{i+1},\dots, f_{p+s-1})\\
% 		+\sum_{i=p}^{p+s-1}(-1)^i\imath_{X_0}^{-1}(\varphi\cdot\psi)(\Sigma f_1,\dots,\Sigma f_i,\imath_{X_i},f_{i+1},\dots, f_{p+s-1})
% 		\\=\sum_{i=0}^{p-1}(-1)^{i+t(p+1)}\imath_{X_0}^{-1}\varphi(\Sigma f_1,\dots,\Sigma f_i,\imath_{X_i},f_{i+1},\dots, f_{p-1})\cdot \psi(f_p,\dots, f_{p+s-1})\\
% 		+\sum_{i=p}^{p+s-1}(-1)^{(i-p)+p+tp}\underbrace{\imath_{X_0}^{-1}\varphi(\Sigma f_1,\dots,\Sigma f_p)\cdot\imath_{X_p}}_{(-1)^{\abs{\varphi}_{int}\Sigma_*^{-1}\Sigma^*(\varphi)(f_1,\dots, f_p)}}\cdot\imath_{X_p}^{-1}\cdot\psi(\Sigma f_{p+1},\dots,\Sigma f_i,\imath_{X_i},f_{i+1},\dots, f_{p+s-1})\\
% 		=(-1)^{t(p+1)}h(\varphi)(f_1,\dots,f_{p-1})\cdot \psi(f_p,\dots, f_{p+s-1})\\
% 		+(-1)^{tp+\abs{\varphi}}\Sigma^{-1}_*\Sigma^*(\varphi)(f_1,\dots,f_p)\cdot h(\psi)(f_{p+1},\dots, f_{p+s-1})\\
% 		=(h(\varphi)\cdot\psi+(-1)^{\varphi}\Sigma^{-1}_*\Sigma^*(\varphi)\cdot\psi)(f_1,\dots, f_{p+s-1}).
% 	\end{multline*}

% 	END OF COMMENTED COMPUTATION}

	The long exact sequence in the statement is therefore also defined by the following short exact sequence of complexes
	\[\hc{\star-1,*}{\C T,\C T_{\Sigma}}\hookrightarrow\hc{\star,*}{\C T,\Sigma}\twoheadrightarrow\hc{\star,*}{\C T,\C T_{\Sigma}}.\]
	Here, the map on the right is the projection onto the first coordinate, which is a differential graded algebra morphism. The map on the left is the inclusion of the second factor, which is a $\hc{\star,*}{\C T,\Sigma}$-bimodule morphism if we first regard the source as 
	the twisted and then shifted $\hc{\star,*}{\C T,\C T_{\Sigma}}$-bimodule ${}_{\Sigma_*^{-1}\Sigma^*}\hc{\star-1,*}{\C T,\C T_{\Sigma}}_1$ and then we pull it back along the second map. The cohomology of ${}_{\Sigma_*^{-1}\Sigma^*}\hc{\star-1,*}{\C T,\C T_{\Sigma}}_1$ is the $\hh{\star,*}{\C T,\C T_{\Sigma}}$-bimodule ${}_{\Sigma_*^{-1}\Sigma^*}\hh{\star-1,*}{\C T,\C T_{\Sigma}}_1$. If we restrict coefficients to 
	$\hh{\star,*}{\C T_{\Sigma},\C T_{\Sigma}}$ along $i^*$, then ${}_{\Sigma_*^{-1}\Sigma^*}\hh{\star-1,*}{\C T,\C T_{\Sigma}}_1$ coincides with the bimodule $\hh{\star-1,*}{\C T,\C T_{\Sigma}}$, with no twisting, because $i^*$ maps $\hh{\star,*}{\C T_{\Sigma},\C T_{\Sigma}}$ to the equalizer of $\Sigma_*^{-1}\Sigma^*$ and the identity in $\hh{\star,*}{\C T,\C T_{\Sigma}}$. Therefore, $\partial$ is an $\hh{\star,*}{\C T_{\Sigma},\C T_{\Sigma}}$-bimodule morphism. Moreover, the second direct summand in $\hc{\star,*}{\C T,\Sigma}$ is a square-zero two-sided ideal by definition, hence the image of $\partial$ is a square-zero ideal in $\hh{\star,*}{\C T_{\Sigma},\C T_{\Sigma}}$.
	
	In order to conclude this proof, we must check that the map in cohomology induced by the endomorphism of $\hc{\star,*}{\C T,\Sigma}$ given by $(\varphi,\psi)\mapsto(0,-\varphi)$ coincides with left multiplication by $\{\delta\}$. The image of the Euler derivation in $\hc{\star,*}{\C T,\Sigma}$ along \eqref{dga-quasi-iso} is $(0,-1)$ since
	\begin{align*}
		i^*(\delta)&=0,\\
		h(\delta)(X)&=\imath_X^{-1}\cdot \delta(\imath_X)\\&=|\imath_X|\imath_X^{-1}\cdot \imath_X\\&=-1,
	\end{align*}
	and $(0,-1)\cdot(\varphi,\psi)=(0,-\varphi)$, hence we are done.
\end{proof}

\section{Hochschild--Tate cohomology}\label{hochschild_tate}

In this section we recall stable Hochschild cohomology, as introduced in \cite{eu_calabi-yau_2009}. We emphasize some explicit constructions and perform computations which will be useful for our purposes.

The \emph{stable module category} $\modulesst{\Lambda}$ of a Frobenius algebra $\Lambda$ is the quotient of $\modules{\Lambda}$ by the ideal of maps which factor through a projective-injective object. The \emph{syzygy functor} $\Omega\colon \modulesst{\Lambda}\rightarrow\modulesst{\Lambda}$, which is an equivalence, is defined by the choice of short exact sequences
\[\Omega(M)\hookrightarrow P\twoheadrightarrow M\]
with projective-injective middle term. Moreover, $\modulesst{\Lambda}$ is triangulated with suspension functor $\Omega^{-1}$, the \emph{cosyzygy functor}, an inverse equivalence of $\Omega$ which is similarly defined by the choice of short exact sequences
\[M\hookrightarrow P'\twoheadrightarrow \Omega^{-1}(M)\]
with projective-injective middle term. Graded morphisms in $\modulesst{\Lambda}_{\Omega^{-1}}	$ are called \emph{Tate} or \emph{stable $\ext$ functors}, $n\in\mathbb Z$,
\begin{multline*}
\exttate^n_\Lambda(M,N)=\modulesst{\Lambda}_{\Omega^{-1}}^n(M,N)=\homst_{\Lambda}(M,\Omega^{-n}(N))\\
\cong\homst_{\Lambda}(\Omega^n(M),N)\cong H^n\hom_{\Lambda}(P_\star,N).
\end{multline*}
Here, $\homst_{\Lambda}$ denotes morphism vector spaces in $\modulesst{\Lambda}$ and $P_\star$ is a \emph{complete} or \emph{two-sided resolution} of $M$. The obvious projection $P_\star\twoheadrightarrow P_{\geq 0}$ defines natural \emph{comparison maps}, $n\geq 0$,
\[\ext_\Lambda^n(M,N)\longrightarrow\exttate^n_\Lambda(M,N)\]
which are isomorphisms for $n>0$ and onto for $n=0$. Composition in $\modulesst{\Lambda}_{\Omega^{-1}}$ extends the Yoneda product.

The \emph{Hochschild--Tate} or \emph{stable Hochschild cohomology} of $\Lambda$ with coefficients in a $\Lambda$-bimodule $M$ is defined as
\[\htate{\star}{\Lambda,M}=\exttate_{\Lambda^\env}^\star(\Lambda,M).\]
In particular, we have \emph{comparison maps}
\[\hh{\star}{\Lambda,M}\longrightarrow\htate{\star}{\Lambda,M}\]
which are isomorphisms for $n>0$ and onto for $n=0$. Hochschild--Tate cohomology, as a functor on the coefficients, is functorial in $\modulesst{\Lambda^\env}$.

The category $\modules{\Lambda^\env}$ is monoidal for the tensor product $\otimes_{\Lambda}$ and $\Lambda^\env$ is Frobenius \cite[Lemmas 3.1 and 3.2]{bergh_tate-hochschild_2013}, but $\modulesst{\Lambda^\env}$ does not inherit the monoidal structure because the functor $M\otimes_{\Lambda}-$ is not exact and it does not preserve projective-injective objects in general. Nevertheless, the full subcategory $\bimp{\Lambda}\subset\modules{\Lambda^\env}$ spanned by those $\Lambda$-bimodules which are left and right projective is a Frobenius exact category containing all projective-injective $\Lambda$-bimodules. Moreover, the tensor product $\otimes_{\Lambda}$ restricts to $\bimp{\Lambda}$ and the functors $M\otimes_{\Lambda}-$ and $-\otimes_{\Lambda}M$ are exact and preserve projective-injective objects for $M$ in $\bimp{\Lambda}$. Therefore, the stable category $\bimpst{\Lambda}$, which is a full triangulated subcategory of $\modulesst{\Lambda^\env}$, inherits the monoidal structure $\otimes_{\Lambda}$. 

From now on, in this and in the following two sections, we exceptionally use the symbol $\otimes$ to denote $\otimes_{\Lambda}$. The $k$-linear tensor product will therefore be denoted by $\otimes_k$ here. 

The tensor product functor in $\bimpst{\Lambda}$ is triangulated in both variables, so there are natural isomorphisms in $\bimpst{\Lambda}$
\begin{equation}\label{isos}
	\Omega(M)\otimes  N\cong \Omega(M\otimes  N)\cong M\otimes  \Omega(N).
\end{equation}
These isomorphisms satisfy coherence conditions that can be easily deduced from the explicit construction below. 
%coherent. Given three objects, we have three commutative diagrams involving the previous isomorphisms and the associativity constraint, 
%\begin{center}
%	\includegraphics[scale=.5]{figure2.pdf}
%\end{center}
%We also have two commutative triangles involving the isomorphisms \eqref{isos} and the right and left unit constraints,
%\begin{center}
%	\begin{tikzcd}[column sep = -10]
%		\Omega(M)\otimes \Lambda\ar[rd]\ar[rr]&&\Omega(M)\\
%		&\Omega(M\otimes \Lambda)\ar[ru]&
%	\end{tikzcd}\qquad
%	\begin{tikzcd}[column sep = -10]
%		\Lambda\otimes\Omega(M)\ar[rd]\ar[rr]&&\Omega(M)\\
%		&\Omega(\Lambda\otimes M)\ar[ru]&
%	\end{tikzcd}
%\end{center}
%% 
%% The picture
%%
%%\begin{array}{ccc}
%%	\begin{tikzcd}[column sep = -35]
%%	&&\Omega((L\otimes M)\otimes N)\ar[rrd]\ar[lld]&&\\
%%	\Omega(L\otimes M)\otimes N\ar[rd]&&&&\Omega(L\otimes (M\otimes N))\ar[ld]\\
%%	&(\Omega(L)\otimes M)\otimes N\ar[rr]&&\Omega(L)\otimes (M\otimes N)&
%%	\end{tikzcd}
%%	\begin{tikzcd}[column sep = -35]
%%	&\Omega((L\otimes M)\otimes N)\ar[rd]\ar[ld]&\\
%%	\Omega(L\otimes M)\otimes N\ar[d]&&\Omega(L\otimes (M\otimes N))\ar[d]\\
%%	(L\otimes \Omega(M))\otimes N\ar[rd]&&L\otimes \Omega(M\otimes N)\ar[ld]\\
%%	&L\otimes (\Omega(M)\otimes N)&
%%	\end{tikzcd}
%%	\begin{tikzcd}[column sep = -35]
%%	&&\Omega((L\otimes M)\otimes N)\ar[rrd]\ar[lld]&&\\
%%	(L\otimes M)\otimes \Omega(N)\ar[<-, rd]&&&&\Omega(L\otimes (M\otimes N))\ar[ld]\\
%%	&L\otimes (M\otimes \Omega(N))\ar[<-, rr]&&L\otimes \Omega(M\otimes N)&
%%	\end{tikzcd}
%%\end{array}
Moreover, the square
\begin{equation}\label{anti}
	\begin{tikzcd}
		\Omega(M)\otimes \Omega(N)\ar[r]\ar[d] & \Omega(M\otimes \Omega(N))\ar[d]\\
		\Omega(\Omega(M)\otimes N) \ar[r] & \Omega^2(M\otimes N) 
	\end{tikzcd}
\end{equation}
commutes up to a $-1$ sign. This follows from Lemma \ref{sign} below. In particular, $\bimpst{\Lambda}$ fits in the general framework described in \cite{suarez-alvarez_hilton-heckmann_2004}.

In order to fix a syzygy functor $\Omega$ in $\bimpst{\Lambda}$, we first define $\Omega(\Lambda)$ via any short exact sequence in $\bimp{\Lambda}$
\[\Omega(\Lambda)\stackrel{j}\hookrightarrow P\stackrel{p}\twoheadrightarrow\Lambda\]
with $P$ projective-injective, and then $\Omega (M)$ via
\[\Omega(\Lambda)\otimes  M\hookrightarrow P\otimes  M\twoheadrightarrow\Lambda\otimes  M\cong M.\]
For $M=\Lambda$, both definitions are equivalent via the canonical isomorphism $\Omega(\Lambda)\otimes\Lambda\cong\Omega(\Lambda)$. With this choice, $\Omega=\Omega(\Lambda)\otimes -$, the first isomorphism in \eqref{isos} is the identity, and the second one is given by a coherent natural isomorphism in $\bimpst{\Lambda}$
\begin{equation}\label{zeta}
\zeta_M\colon \Omega(\Lambda)\otimes  M\cong M\otimes \Omega(\Lambda)
\end{equation}
defined by any choice of map of extensions in $\bimp{\Lambda}$ of the following form
\begin{center}
	\begin{tikzcd}
	\Omega(\Lambda)\otimes  M\ar[r, hook, "j\otimes 1"] \ar[d] & P\otimes  M \ar[r, two heads, "p\otimes 1"] \ar[d] & \Lambda\otimes  M\cong M\ar[d, equal, xshift=22]\\
	M\otimes  \Omega(\Lambda)\ar[r, hook, "1\otimes j"] & M\otimes  P \ar[r, two heads, "1\otimes p"] & M\otimes  \Lambda\cong M
	\end{tikzcd}
\end{center}
Coherence here means that the following diagrams commute
\begin{center}
	\begin{tikzcd}[column sep = -20]
		&\Omega(\Lambda)\otimes M\otimes N \ar[rd, "\zeta_{M\otimes N}"]\ar[ld, "\zeta_{M}\otimes 1"']&\\
		M\otimes \Omega(\Lambda)\otimes N\ar[rr, "1\otimes \zeta_{N}"]&&M\otimes N\otimes\Omega(\Lambda)
	\end{tikzcd}
	\begin{tikzcd}[column sep = -0]
		\Omega(\Lambda)\otimes\Lambda \ar[rr, "\zeta_{\Lambda}"] \ar[rd, "\cong"']&&\Lambda\otimes\Omega(\Lambda)\ar[ld, "\cong"]\\
		&\Omega(\Lambda)&
	\end{tikzcd}
\end{center}
Here, and elsewhere, we omit associativity constraints.

\begin{lemma}\label{sign}
	The automorphism $\zeta_{\Omega(\Lambda)}\colon \Omega(\Lambda)\otimes \Omega(\Lambda)\cong\Omega(\Lambda)\otimes \Omega(\Lambda)$ in $\bimpst{\Lambda}$ is $-1$. In particular, $\zeta_{\Omega(\Lambda)}^{-1}=\zeta_{\Omega(\Lambda)}$.
\end{lemma}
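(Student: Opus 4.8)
The plan is to show that $\zeta_{\Omega(\Lambda)}$ equals its own negative by exploiting the symmetry of the defining map of extensions when $M = \Omega(\Lambda)$. First I would set up the defining square for $\zeta_{\Omega(\Lambda)}$: it is given by a map of short exact sequences in $\bimp{\Lambda}$ whose outer columns are $j\otimes 1\colon \Omega(\Lambda)\otimes\Omega(\Lambda)\to P\otimes\Omega(\Lambda)$ on top and $1\otimes j\colon \Omega(\Lambda)\otimes\Omega(\Lambda)\to \Omega(\Lambda)\otimes P$ on the bottom, with the rightmost vertical an identity after the canonical identifications with $\Omega(\Lambda)$. Consider the flip automorphism $\tau\colon M'\otimes M''\to M''\otimes M'$ — not the Koszul-signed graded flip, but the plain interchange of tensor factors at the level of $\bimp{\Lambda}$, which is a strict symmetry of $\otimes_\Lambda$. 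Applying $\tau$ to the three terms of the extension $\Omega(\Lambda)\otimes M\hookrightarrow P\otimes M\twoheadrightarrow M$ for $M=\Omega(\Lambda)$ turns the top row of the $\zeta$-square into its bottom row and vice versa, so $\tau$ conjugates $\zeta_{\Omega(\Lambda)}$ into a map of extensions realizing $\zeta_{\Omega(\Lambda)}^{-1}$. Hence in $\bimpst{\Lambda}$ one obtains $\zeta_{\Omega(\Lambda)}^{-1} = \tau \circ \zeta_{\Omega(\Lambda)} \circ \tau^{-1}$, and since $\tau$ is central with respect to this automorphism (both are endomorphisms of $\Omega(\Lambda)^{\otimes 2}$ built from the symmetry), this gives $\zeta_{\Omega(\Lambda)}^{-1} = \zeta_{\Omega(\Lambda)}$, i.e. $\zeta_{\Omega(\Lambda)}^2 = \mathrm{id}$.

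Next I would pin down that $\zeta_{\Omega(\Lambda)}$ is not the identity but genuinely $-1$. The group of automorphisms of $\Omega(\Lambda)$ in $\bimpst{\Lambda}$ acting by scalars here is constrained: $\Omega(\Lambda)$ is an invertible object of the monoidal triangulated category $\bimpst{\Lambda}$ (its tensor powers compute $\htate{\star}{\Lambda,\Lambda}$), so $\endomorphism_{\bimpst{\Lambda}}(\Omega(\Lambda)^{\otimes 2}) \cong \endomorphism_{\bimpst{\Lambda}}(\Lambda) \cong \htate{0}{\Lambda,\Lambda}$, which contains the central scalars $k$. The relevant input is the general Hilton–Eckmann type argument of \cite{suarez-alvarez_hilton-heckmann_2004}: the square \eqref{anti} commutes up to the sign $\zeta_{\Omega(\Lambda)}$ by naturality and coherence of $\zeta$ (one fills \eqref{anti} using $\zeta_M\otimes 1$, $1\otimes\zeta_N$ along the two composites and compares via the coherence hexagon), so the claim "\eqref{anti} anticommutes" is \emph{equivalent} to "$\zeta_{\Omega(\Lambda)} = -1$". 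Thus I would instead prove \eqref{anti} anticommutes directly. For this, take $M = N = \Lambda$ is trivial; the real content is the case of general $M,N$, which by naturality reduces to $M = N = \Lambda$ after all, so the whole statement reduces to a single sign computed from the syzygy of $\Lambda$ itself: one compares the two ways of building $\Omega^2(\Lambda) = \Omega(\Lambda)\otimes\Omega(\Lambda)$ — "first coordinate then second" versus "second then first" — inside a $2\times 2$ diagram of extensions, and the discrepancy is exactly the Koszul sign $(-1)^{1\cdot 1} = -1$ coming from the two length-one syzygies, precisely as in the classical fact that $\Omega$ on a monoidal triangulated category is only a monoidal functor up to the Koszul sign.

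The main obstacle, I expect, is the bookkeeping in the last step: one must exhibit an explicit $2$-dimensional map-of-extensions diagram whose two evident fillings differ by $-1$, and justify that the comparison in $\bimpst{\Lambda}$ is independent of the choices (any two maps of extensions lifting a given map agree in the stable category, since the ambiguity factors through projective-injectives). Concretely, I would choose a fixed $\Omega(\Lambda)\hookrightarrow P\twoheadrightarrow\Lambda$, form the tensor square to get
\[
\begin{tikzcd}[column sep=small, row sep=small]
\Omega(\Lambda)\otimes\Omega(\Lambda)\ar[r,hook]\ar[d,hook] & P\otimes\Omega(\Lambda)\ar[r,two heads]\ar[d,hook] & \Lambda\otimes\Omega(\Lambda)\ar[d,hook]\\
\Omega(\Lambda)\otimes P\ar[r,hook]\ar[d,two heads] & P\otimes P\ar[r,two heads]\ar[d,two heads] & \Lambda\otimes P\ar[d,two heads]\\
\Omega(\Lambda)\otimes\Lambda\ar[r,hook] & P\otimes\Lambda\ar[r,two heads] & \Lambda\otimes\Lambda
\end{tikzcd}
\]
and observe that reading off $\Omega^2(\Lambda)$ along the top-left $2\times 2$ block by rows versus by columns gives the two composites whose comparison map is $\zeta_{\Omega(\Lambda)}$; the sign then emerges from the standard computation of the connecting map of a bicomplex, which introduces the $(-1)$ when the two length-$1$ degrees are swapped. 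Once this sign is identified, Lemma \ref{sign} follows, and the addendum $\zeta_{\Omega(\Lambda)}^{-1} = \zeta_{\Omega(\Lambda)}$ is immediate since $(-1)^{-1} = -1$.
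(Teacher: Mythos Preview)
Your proposal has a fundamental error at the outset: there is no flip automorphism $\tau\colon M'\otimes_\Lambda M''\to M''\otimes_\Lambda M'$ on $\bimp{\Lambda}$. The monoidal category of $\Lambda$-bimodules under $\otimes_\Lambda$ is \emph{not} symmetric (nor braided) in general, so your entire first paragraph, which conjugates $\zeta_{\Omega(\Lambda)}$ by this nonexistent $\tau$ to deduce $\zeta_{\Omega(\Lambda)}^2=\id{}$, does not get off the ground. The very point of the lemma is that $\zeta$ is a \emph{substitute} for a symmetry that does not exist on the nose; you cannot assume such a symmetry to analyze $\zeta$.

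The second and third paragraphs try an alternative route but are circular and vague. You correctly observe that the anticommutativity of \eqref{anti} is equivalent to $\zeta_{\Omega(\Lambda)}=-1$, but then ``reduce by naturality to $M=N=\Lambda$'': for $M=N=\Lambda$ the maps in \eqref{anti} are built from $\zeta_\Lambda$, which is the identity by the coherence triangle, so the square commutes trivially and carries no sign information. The content genuinely lives at $M=N=\Omega(\Lambda)$, which is exactly the statement you are trying to prove. The closing appeal to ``the standard computation of the connecting map of a bicomplex'' is not a proof here either: you have not identified which connecting map you mean, nor why its sign transports to $\zeta_{\Omega(\Lambda)}$ in $\bimpst{\Lambda}$ (as opposed to merely in some chain-level model).

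By contrast, the paper's argument is a short explicit computation. One is free to choose the defining extension, so take $\Omega(\Lambda)=\ker\mu$ for $\mu\colon\Lambda\otimes_k\Lambda\to\Lambda$ the multiplication. Then one simply writes down, by hand, a map of extensions
\[
\begin{tikzcd}
\Omega(\Lambda)\otimes\Omega(\Lambda)\ar[r,hook,"j\otimes 1"]\ar[d,"-1"'] & (\Lambda\otimes_k\Lambda)\otimes\Omega(\Lambda)\ar[r,two heads,"\mu\otimes 1"]\ar[d,"f"] & \Omega(\Lambda)\ar[d,equal]\\
\Omega(\Lambda)\otimes\Omega(\Lambda)\ar[r,hook,"1\otimes j"] & \Omega(\Lambda)\otimes(\Lambda\otimes_k\Lambda)\ar[r,two heads,"1\otimes\mu"] & \Omega(\Lambda)
\end{tikzcd}
\]
with an explicit formula for $f$, and checks the two squares commute. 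Since any lift of the identity on $\Omega(\Lambda)$ to a map of extensions computes $\zeta_{\Omega(\Lambda)}$ in $\bimpst{\Lambda}$, the left column forces $\zeta_{\Omega(\Lambda)}=-1$. No abstract symmetry, no bicomplex sign-chasing---just an elementary verification using the concrete description of $\Omega(\Lambda)$ as a kernel.
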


\begin{proof}
	This result does not depend on the short exact sequence chosen to define $\Omega(\Lambda)$. Therefore, we can take the following one,
	\[\Omega(\Lambda)\stackrel{j}\hookrightarrow  \Lambda\otimes_k  \Lambda\stackrel{\mu}{\twoheadrightarrow} \Lambda,\]
	where $\mu\colon  \Lambda\otimes_k  \Lambda\rightarrow  \Lambda$ is the product in $ \Lambda$ and $j$ is the inclusion of $\Omega(\Lambda)=\ker\mu$. With these choices, we have the following commutative diagram
	\begin{center}
		\begin{tikzcd}
		\Omega(\Lambda)\otimes  \Omega(\Lambda)\ar[r, hook, "j\otimes 1"] \ar[d, "-1"'] & \Lambda\otimes_k\Lambda \otimes  \Omega(\Lambda) \ar[r, two heads, "\mu\otimes 1"] \ar[d, "f"] & \Lambda\otimes  \Omega(\Lambda)\cong \Omega(\Lambda)\ar[d, equal, xshift=25]\\
		\Omega(\Lambda)\otimes  \Omega(\Lambda)\ar[r, hook, "1\otimes j"] & \Omega(\Lambda)\otimes  \Lambda\otimes_k\Lambda  \ar[r, two heads, "1\otimes\mu"] & \Omega(\Lambda)\otimes  \Lambda\cong \Omega(\Lambda)
		\end{tikzcd}
	\end{center}
	where $f$ is defined as
	\[\textstyle f\left(a\otimes b\otimes \left(\sum_ic_i\otimes d_i\right)\right)=\sum_i\left(abc_i\otimes 1-a\otimes bc_i\right)\otimes 1\otimes d_i.\]
	This proves the claim.
\end{proof}

We can similarly choose an adjoint inverse cosyzygy functor by taking a short exact sequence in $\bimp{\Lambda}$
\[\Lambda\stackrel{j'}\hookrightarrow P'\stackrel{p'}\twoheadrightarrow \Omega^{-1}(\Lambda)\]
with $P'$ projective-injective and setting $\Omega^{-1}(M)=\Omega^{-1}(\Lambda)\otimes M$. 
The counit is determined by an isomorphism in $\bimpst{\Lambda}$
\[\xi\colon \Omega(\Lambda) \otimes \Omega^{-1}(\Lambda)\cong\Lambda\]
given by any choice of map of extensions in $\bimp{\Lambda}$ as follows
\begin{center}
	\begin{tikzcd}
	\Omega(\Lambda) \otimes \Omega^{-1}(\Lambda) \ar[r, hook, "j\otimes 1"] \ar[d] & P \otimes \Omega^{-1}(\Lambda) \ar[r, two heads, "p\otimes 1"] \ar[d] & \Lambda \otimes \Omega^{-1}(\Lambda) \cong &[-31pt]\Omega^{-1}(\Lambda) \ar[d, equal] \\
	\Lambda \ar[r, hook, "j'"] & P' \ar[rr, two heads, "p'"] && \Omega^{-1}(\Lambda)
	\end{tikzcd}
\end{center}
Once $\xi$ is chosen, there is only one possible isomorphism in $\bimpst{\Lambda}$
\[\xi'\colon \Lambda\cong\Omega^{-1}(\Lambda) \otimes \Omega(\Lambda)\]
defining the unit. It is easy to check, using the coherence of $\zeta$ and Lemma \ref{sign}, that $\xi'=-\zeta_{\Omega^{-1}(\Lambda)}\xi^{-1}$. 
%
% DIAGRAM EXPLANING THIS AS $-\zeta_{\Omega^{-1}(\Lambda)}\xi^{-1}$ SATISFYING THE UNIT EQUATION WITH RESPECT TO THE COUNIT $\xi$
%
% \begin{tikzcd}
% 	\Omega(\Lambda)\otimes\Omega^{-1}(\Lambda)\otimes\Omega(\Lambda)
% 	\ar[r,"\xi\otimes 1"]
% 	\ar[dd,<-,"\zeta_{\Omega(\Lambda)\otimes\Omega^{-1}(\Lambda)}"', bend right=90]
% 	\ar[d,<-,"1\otimes \zeta_{\Omega^{-1}(\Lambda)}"']&
% 	\Lambda\otimes\Omega(\Lambda)
% 	\ar[r,"\cong"]
% 	\ar[dd,<-,"\zeta_{\Lambda}"']&
% 	\Omega(\Lambda)\\
% 	\Omega(\Lambda)\otimes \Omega(\Lambda)\otimes \Omega^{-1}(\Lambda)\ar[d,<-,"\zeta_{\Omega(\Lambda)}\otimes 1=-1"']\\
% 	\Omega(\Lambda)\otimes \Omega(\Lambda)\otimes \Omega^{-1}(\Lambda)\ar[r,"1\otimes\xi"]&
% 	\Omega(\Lambda)\otimes\Lambda\ar[ruu,bend right,"\cong"]
% 	\end{tikzcd}
%
%
There are also coherent isomorphisms in $\bimpst{\Lambda}$
\begin{equation}\label{zeta2}
\zeta_M'\colon M\otimes \Omega^{-1}(\Lambda)\cong \Omega^{-1}(\Lambda)\otimes  M,
\end{equation}
like those in \eqref{zeta}. The isomorphisms $\zeta_{M}$ and $\zeta_{M}'$ are compatible in the sense that the two isomorphisms $\Omega(\Lambda)\otimes  M\otimes \Omega^{-1}(\Lambda)\cong M$ which can be constructed by using these isomorphisms and $\xi$ coincide.

Now, if we define $\Omega^n(\Lambda)$, $n\in\mathbb Z$, as a tensor power of $\Omega(\Lambda)$ or $\Omega^{-1}(\Lambda)$, according to the sign of $n$, the isomorphisms $\xi$ and $\xi'$ define associative isomorphisms $\Omega^p(\Lambda)\otimes\Omega^q(\Lambda)\cong \Omega^{p+q}(\Lambda)$, $p,q\in\mathbb Z$.

Given two objects $M$ and $N$ in $\bimpst{\Lambda}$, there is a natural associative \emph{cup-product} operation
\[\htate{p}{\Lambda,M}\otimes_k \htate{q}{\Lambda,N}\longrightarrow\htate{p+q}{\Lambda,M\otimes N}\]
defined by
\[(\Omega^p(\Lambda) \stackrel{f}\rightarrow M)\smile(\Omega^q(\Lambda)\stackrel{g}\rightarrow N)=(\Omega^{p+q}(\Lambda)
\cong \Omega^{p}(\Lambda)\otimes\Omega^{q}(\Lambda)\stackrel{f\otimes g}\longrightarrow M\otimes N).\]
In particular, $\htate{\star}{\Lambda,M}$ is a graded algebra if $M$ is a monoid in $\bimpst{\Lambda}$, even a bigraded algebra if $M$ is graded. The comparison maps from Hochschild to stable Hochschild cohomology preserve the cup-product.

Let $\sigma\colon\Lambda\rightarrow\Lambda$ be an algebra automorphism. We will need some knowledge on units in the bigraded algebra $\hh{\star,*}{\Lambda,\Lambda(\sigma)}$.

\begin{proposition}\label{surjective_units}
	The comparison map $\hh{0}{\Lambda,\Lambda}\twoheadrightarrow \htate{0}{\Lambda,\Lambda}$ (co)restricts to a surjection between groups of units $\hh{0}{\Lambda,\Lambda}^\times\twoheadrightarrow \htate{0}{\Lambda,\Lambda}^\times$.
\end{proposition}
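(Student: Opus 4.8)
The statement is: the comparison map $\hh{0}{\Lambda,\Lambda}\twoheadrightarrow \htate{0}{\Lambda,\Lambda}$ restricts to a surjection $\hh{0}{\Lambda,\Lambda}^\times\twoheadrightarrow \htate{0}{\Lambda,\Lambda}^\times$ between unit groups.

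First, recall that $\hh{0}{\Lambda,\Lambda}=\centeralg(\Lambda)$, the center of $\Lambda$, which is a commutative ring, and $\htate{0}{\Lambda,\Lambda}=\centerst(\Lambda)$, the stable center, a commutative quotient ring of $\centeralg(\Lambda)$. The comparison map $\pi\colon\centeralg(\Lambda)\to\centerst(\Lambda)$ is a surjective ring homomorphism. The content of the proposition is that every unit in the quotient ring lifts to a unit upstairs.

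**Approach.** The plan is to identify the kernel $I=\ker\pi$ and show that $I$ consists of elements $z$ such that $1+z$ is already a unit in $\centeralg(\Lambda)$ — equivalently, that $I$ lies in the Jacobson radical of $\centeralg(\Lambda)$, or at least that $1+I\subseteq\centeralg(\Lambda)^\times$. Once we know $\pi$ is surjective with $1+I\subseteq\centeralg(\Lambda)^\times$: given a unit $\bar u\in\centerst(\Lambda)^\times$, pick any lift $u\in\centeralg(\Lambda)$, and pick a lift $v$ of $\bar u^{-1}$; then $uv=1+z$ for some $z\in I$, so $uv$ is a unit in $\centeralg(\Lambda)$, hence $u$ is a unit in $\centeralg(\Lambda)$ (a one-sided inverse in a commutative ring, times possibly $v$, suffices), and $\pi(u)=\bar u$. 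So the whole statement reduces to the claim $1+I\subseteq\centeralg(\Lambda)^\times$.

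**The key step.** I would show that $I=\ker\pi$ is a nilpotent ideal, or more robustly, that $I$ is contained in $\rad(\centeralg(\Lambda))$; either suffices. The kernel of $\centeralg(\Lambda)\to\centerst(\Lambda)$ consists precisely of central elements $z\in\Lambda$ that, viewed as the bimodule endomorphism $z\cdot(-)\colon\Lambda\to\Lambda$, factor through a projective-injective $\Lambda$-bimodule — equivalently, factor through the multiplication map $\mu\colon\Lambda\otimes_k\Lambda\to\Lambda$ (up to the standard identifications), i.e. $z=\mu\circ\theta$ for some bimodule map $\theta\colon\Lambda\to\Lambda\otimes_k\Lambda$. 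Concretely these are the elements $z=\sum_i a_ic_ib_i$ where $\sum_i a_i\otimes b_i\in(\Lambda\otimes_k\Lambda)^\Lambda$ is a Casimir-type element, $c_i\in\Lambda$. This is exactly the classical description: the image of the transfer/norm map, sometimes called the ideal of "projective" central elements. A standard fact is that this ideal is contained in $\rad(\centeralg(\Lambda))$ when $\Lambda$ is a finite-dimensional algebra which is not separable — but we don't even need that: for the unit-lifting statement it's enough that $\centeralg(\Lambda)$ is a commutative Artinian (in fact finite-dimensional) $k$-algebra, so it is a finite product of local rings, and $\centerst(\Lambda)$ is a quotient; a surjection of commutative Artinian rings always restricts to a surjection on unit groups, because units in a commutative Artinian ring are exactly the elements not lying in any maximal ideal, and maximal ideals of the quotient pull back to maximal ideals upstairs. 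So in fact the cleanest route is: $\centeralg(\Lambda)$ is a finite-dimensional commutative $k$-algebra, $\pi$ is surjective, and for any surjection $A\twoheadrightarrow B$ of commutative Artinian rings the induced map $A^\times\to B^\times$ is surjective — prove this last point by lifting $\bar u$ to $u$, noting $u$ avoids every maximal ideal of $A$ (since those are exactly preimages of maximal ideals of $B$ together with... wait, $A$ may have more maximal ideals than $B$). Hmm — so the genuinely careful version: a surjection $A\twoheadrightarrow B$ of Artinian rings need NOT give a surjection on units in general (e.g. $k\times k\twoheadrightarrow k$), so we DO need the structural input that $\ker\pi\subseteq\rad(A)$.

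**Main obstacle.** Thus the real work, and the main obstacle, is proving $\ker(\centeralg(\Lambda)\twoheadrightarrow\centerst(\Lambda))\subseteq\rad(\centeralg(\Lambda))$. I would do this by the following argument: if $e\in\centeralg(\Lambda)$ were an idempotent lying in $\ker\pi$, then $e$ as a bimodule map $\Lambda\to\Lambda$ is both idempotent and factors through a projective-injective bimodule; its image $e\Lambda$ would then be a direct summand of $\Lambda$ as a bimodule which is also a direct summand of a projective-injective bimodule, hence projective-injective; but a projective bimodule summand of $\Lambda$ forces the corresponding block of $\Lambda$ to be separable. Since we make no separability hypothesis here and want the statement in general, the correct phrasing is: $\ker\pi$ contains no nonzero idempotent, and a commutative Artinian ring in which an ideal contains no nonzero idempotent has that ideal inside the radical — because $A=\prod A_j$ with $A_j$ local, an ideal not in $\rad(A)=\prod\rad(A_j)$ would surject onto some $A_j/\rad(A_j)$, a field, hence contain an element mapping to $1$, and by Artinianness contain an idempotent generating that factor. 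So: show $\ker\pi$ has no nonzero idempotents, deduce $\ker\pi\subseteq\rad(\centeralg(\Lambda))$, hence $1+\ker\pi\subseteq\centeralg(\Lambda)^\times$, hence units lift. I expect the "no nonzero idempotents in $\ker\pi$" step to require the bimodule-summand argument above, or alternatively a direct computation with the explicit factorization through $\mu\colon\Lambda\otimes_k\Lambda\to\Lambda$, and that is where I would spend the bulk of the effort; everything else is formal commutative algebra.
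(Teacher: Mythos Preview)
Your ``cleanest route'' is correct, and it is exactly what the paper does: it simply observes that $\hh{0}{\Lambda,\Lambda}=\centeralg(\Lambda)$ is a finite-dimensional commutative $k$-algebra and cites \cite{chen_surjections_2019} for the general fact that a surjection of such algebras induces a surjection on unit groups. Your alleged counterexample $k\times k\twoheadrightarrow k$ is not a counterexample: the induced map $k^\times\times k^\times\to k^\times$, $(a,b)\mapsto a$, is visibly onto. More generally, any commutative Artinian ring $A$ is a finite product of local rings $\prod_j A_j$, any ideal is $\prod_j I_j$, and for each factor either $I_j=A_j$ (quotient zero, nothing to lift) or $I_j\subseteq\mathfrak m_j$, in which case any preimage of a unit in $A_j/I_j$ avoids $\mathfrak m_j$ and is already a unit in $A_j$. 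So you had the full proof in hand before you talked yourself out of it.

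The alternative route you pivot to---showing $\ker\pi$ contains no nonzero idempotent, hence lies in $\rad(\centeralg(\Lambda))$---is not just harder, it is false in the generality of the proposition. The paper makes no non-separability assumption on $\Lambda$ (indeed $\Lambda=k^n$ is discussed explicitly in the introduction). If $\Lambda$ has a separable block with central idempotent $e$, then $e\Lambda$ is projective as a $\Lambda$-bimodule, so $e\in\ker\pi$ is a nonzero idempotent and $\ker\pi\not\subseteq\rad(\centeralg(\Lambda))$. Your own bimodule-summand argument correctly detects exactly this phenomenon; the conclusion to draw is not a contradiction but that the idempotent-free approach cannot work here. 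Go back to the Artinian argument.
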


This follows from \cite[Corollary 2.3]{chen_surjections_2019} since $\hh{0}{\Lambda,\Lambda}$ is the center of $\Lambda$, and hence a finite-dimensional commutative algebra.

\begin{proposition}\label{units}
	An element $f\in \htate{p,q}{\Lambda,\Lambda(\sigma)}=\htate{p}{\Lambda,{}_{\sigma^q}\Lambda_1}$ is a unit in $\htate{\star,*}{\Lambda,\Lambda(\sigma)}$ if and only if $f\colon \Omega^p(\Lambda)\rightarrow {}_{\sigma^q}\Lambda_1$ is an isomorphism in $\bimpst{\Lambda}$.
\end{proposition}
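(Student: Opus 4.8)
The plan is to reduce both implications to formal manipulations with $\otimes$-invertible objects in the monoidal triangulated category $\bimpst{\Lambda}$. First I would make the algebra structure explicit. By definition $\htate{p,q}{\Lambda,\Lambda(\sigma)}=\exttate^p_{\Lambda^\env}(\Lambda,{}_{\sigma^q}\Lambda_1)=\homst_{\Lambda^\env}(\Omega^p(\Lambda),{}_{\sigma^q}\Lambda_1)$; the unit of $\htate{\star,*}{\Lambda,\Lambda(\sigma)}$ is $\id{\Lambda}\in\htate{0,0}{\Lambda,\Lambda(\sigma)}=\homst_{\Lambda^\env}(\Lambda,\Lambda)$; and the product of $f\in\htate{p,q}{\Lambda,\Lambda(\sigma)}$ and $g\in\htate{s,t}{\Lambda,\Lambda(\sigma)}$ is the cup product followed by the multiplication of $\Lambda(\sigma)$, regarded as a graded monoid in $\bimpst{\Lambda}$, i.e.\ the composite
\[\Omega^{p+s}(\Lambda)\xrightarrow{\ \cong\ }\Omega^p(\Lambda)\otimes\Omega^s(\Lambda)\xrightarrow{\ f\otimes g\ }{}_{\sigma^q}\Lambda_1\otimes{}_{\sigma^t}\Lambda_1\xrightarrow{\ \cong\ }{}_{\sigma^{q+t}}\Lambda_1.\]
Here I use that the graded pieces ${}_{\sigma^n}\Lambda_1$ of $\Lambda(\sigma)$ are honest invertible $\Lambda$-bimodules, so that ${}_{\sigma^q}\Lambda_1\otimes{}_{\sigma^t}\Lambda_1\to{}_{\sigma^{q+t}}\Lambda_1$ is already an isomorphism in $\bimp{\Lambda}$, and that $\Omega(\Lambda)$ is $\otimes$-invertible in $\bimpst{\Lambda}$ with inverse $\Omega^{-1}(\Lambda)$, which is exactly what the isomorphisms $\xi$ and $\xi'$ record. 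Consequently $\Omega^{\pm p}(\Lambda)$ and ${}_{\sigma^{\pm q}}\Lambda_1$ are all $\otimes$-invertible, tensoring with any one of them is an autoequivalence of $\bimpst{\Lambda}$, and the coherence isomorphisms and the sign of Lemma~\ref{sign} will play no role, since we only ever test whether a given map is an isomorphism.

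For the \emph{if} implication, suppose $f\colon\Omega^p(\Lambda)\to{}_{\sigma^q}\Lambda_1$ is an isomorphism in $\bimpst{\Lambda}$. By the product formula, $g\mapsto fg$ is the composite of the autoequivalence $\Omega^p(\Lambda)\otimes(-)$ with pre- and post-composition by the isomorphisms $\Lambda\cong\Omega^p(\Lambda)\otimes\Omega^{-p}(\Lambda)$, $f\otimes\id{{}_{\sigma^{-q}}\Lambda_1}$, and ${}_{\sigma^q}\Lambda_1\otimes{}_{\sigma^{-q}}\Lambda_1\cong\Lambda$; hence it is a bijection $\htate{-p,-q}{\Lambda,\Lambda(\sigma)}\to\htate{0,0}{\Lambda,\Lambda(\sigma)}$, and there is a $g$ with $fg=\id{\Lambda}$. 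Symmetrically, $g\mapsto gf$ is a bijection, so there is $g'$ with $g'f=\id{\Lambda}$, and then $g'=g'(fg)=(g'f)g=g$ is a two-sided inverse. Thus $f$ is a unit.

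For the \emph{only if} implication, suppose $f$ is a unit, with inverse $g\in\htate{-p,-q}{\Lambda,\Lambda(\sigma)}=\homst_{\Lambda^\env}(\Omega^{-p}(\Lambda),{}_{\sigma^{-q}}\Lambda_1)$, so that $\id{\Lambda}=fg$ equals the composite $\Lambda\xrightarrow{\alpha}\Omega^p(\Lambda)\otimes\Omega^{-p}(\Lambda)\xrightarrow{f\otimes g}{}_{\sigma^q}\Lambda_1\otimes{}_{\sigma^{-q}}\Lambda_1\xrightarrow{m}\Lambda$ with $\alpha$ and $m$ isomorphisms. Factoring $f\otimes g=(f\otimes\id{{}_{\sigma^{-q}}\Lambda_1})\circ(\id{\Omega^p(\Lambda)}\otimes g)$ exhibits $m\circ(f\otimes\id{{}_{\sigma^{-q}}\Lambda_1})$, hence $f\otimes\id{{}_{\sigma^{-q}}\Lambda_1}$, as a split epimorphism; since the autoequivalence $(-)\otimes{}_{\sigma^{-q}}\Lambda_1$ reflects split epimorphisms, $f$ is a split epimorphism. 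Factoring instead $f\otimes g=(\id{{}_{\sigma^q}\Lambda_1}\otimes g)\circ(f\otimes\id{\Omega^{-p}(\Lambda)})$ exhibits $(f\otimes\id{\Omega^{-p}(\Lambda)})\circ\alpha$, hence $f\otimes\id{\Omega^{-p}(\Lambda)}$, as a split monomorphism; since $(-)\otimes\Omega^{-p}(\Lambda)$ reflects split monomorphisms, $f$ is a split monomorphism. A morphism that is at once a split monomorphism and a split epimorphism is an isomorphism, so $f$ is an isomorphism in $\bimpst{\Lambda}$. The only delicate point in the whole argument is the bookkeeping in the first paragraph --- the precise shape of the product and of the unit; once that is settled, both implications are purely formal and I expect no genuine obstacle.
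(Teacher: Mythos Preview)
Your proof is correct and follows essentially the same approach as the paper: both directions exploit that $\Omega^{\pm p}(\Lambda)$ and ${}_{\sigma^{\pm q}}\Lambda_1$ are $\otimes$-invertible in $\bimpst{\Lambda}$, and the ``only if'' direction in particular uses the same split-mono/split-epi argument via the two factorizations of $f\otimes g$. The only cosmetic difference is in the ``if'' direction, where the paper simply writes down the explicit inverse $\Omega^{-p}(f\otimes{}_{\sigma^{-q}}\Lambda_1)^{-1}$ rather than arguing that left and right multiplication by $f$ are bijections; this is the same content packaged slightly more directly.
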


\begin{proof}
	Given $g\colon \Omega^{p'}(\Lambda)\rightarrow {}_{\sigma^{q'}}\Lambda_1$, if we use the isomorphisms $\Omega^p(\Lambda)\otimes\Omega^{p'}(\Lambda)\cong \Omega^{p+p'}(\Lambda)$ as identifications, the cup-product $f\smile g$ above coincides with $(f\otimes{}_{\sigma^{q'}}\Lambda_1)(\Omega^{p}(\Lambda)\otimes g)=({}_{\sigma^q}\Lambda_1\otimes g)(f\otimes\Omega^{p'}(\Lambda))$. 
	
	Assume $f$ is invertible. Then $1_\Lambda=f\smile f^{-1}=(f\otimes{}_{\sigma^{-q}}\Lambda_1)(\Omega^{p}(\Lambda)\otimes f^{-1})=({}_{\sigma^q}\Lambda_1\otimes f^{-1})(f\otimes\Omega^{-p}(\Lambda))$. The functor $\Omega^p=\Omega^p(\Lambda)\otimes-$ is an equivalence, and so is $-\otimes\Omega^{-p}(\Lambda)$, which is naturally isomorphic to $\Omega^{-p}$ via $\zeta$ or $\zeta'$. Moreover, ${}_{\sigma^q}\Lambda_1\otimes-$ is also an equivalence of categories since ${}_{\sigma^q}\Lambda_1$ is an invertible $\Lambda$-bimodule with inverse ${}_{\sigma^{-q}}\Lambda_1$, and similarly $-\otimes {}_{\sigma^{-q}}\Lambda_1$. Therefore, $f$ is both a split epimorphism and a split monomorphism in $\bimpst{\Lambda}$, and hence an isomorphism.
	
	Conversely, if $f$ is an isomorphism in $\bimpst{\Lambda}$, then so is $f\otimes{}_{\sigma^{-q}}\Lambda_1$, and the inverse isomorphism $\Omega^{-p}(f\otimes{}_{\sigma^{-q}}\Lambda_1)^{-1}$ is a cup-product inverse for $f$.
\end{proof}

\begin{remark}\label{units_as_extensions}
	For any $p>0$ and $f\in \htate{p,q}{\Lambda,\Lambda(\sigma)}=\ext_{\Lambda}^p(\Lambda,{}_{\sigma^q}\Lambda_1)$, we can take a representing extension
	\[{}_{\sigma^q}\Lambda_1\hookrightarrow P_{p-1}\rightarrow\cdots\rightarrow P_0\twoheadrightarrow\Lambda\]
	with $P_{p-2},\dots, P_0$ projective-injective. Using Proposition \ref{units}, we see that $f$ is a unit in $\htate{\star,*}{\Lambda,\Lambda(\sigma)}$ if and only if $P_{p-1}$ is also projective-injective.
\end{remark}

	We can also use the previous choice of $\Omega(\Lambda)$ to fix a syzygy functor in the stable category of right modules $\modulesst{\Lambda}$, namely $\Omega(M)=M\otimes\Omega(\Lambda)$, and similarly for the cosyzygy functor $\Omega^{-1}$. The extended Yoneda product in $\modulesst{\Lambda}$,
	\[\exttate_{\Lambda}^p(M,N)\otimes_k \exttate_{\Lambda}^q(L,M)\longrightarrow
	\exttate_{\Lambda}^{p+q}(L,N),\]
	can be computed as follows,
	\begin{multline*}
	\homst_{\Lambda}(\Omega^p(M),N)\otimes_k \homst_{\Lambda}(\Omega^q(L),M)\longrightarrow
	\homst_{\Lambda}(\Omega^{p+q}(L),N),\\
	(M\otimes \Omega^p(\Lambda) \stackrel{f}\rightarrow N)\otimes (L\otimes \Omega^q(\Lambda)\stackrel{g}\rightarrow M)\mapsto\qquad\qquad\qquad\qquad\\
	(L\otimes \Omega^{p+q}(\Lambda)\cong L\otimes \Omega^q(\Lambda)\otimes \Omega^{p}(\Lambda)\stackrel{g\otimes 1}{\longrightarrow}M\otimes \Omega^p(\Lambda)\stackrel{f}{\rightarrow}N).
	\end{multline*}
	
\begin{remark}\label{yoneda_extended}
	As above, $f\in \exttate_{\Lambda}^p(M,N)$ is a Yoneda unit if and only if $f\colon \Omega^p(M)\rightarrow N$ is an isomorphism in $\modulesst{\Lambda}$. For $p>0$, $\exttate_{\Lambda}^p(M,N)=\ext_{\Lambda}^p(M,N)$ so we can take an extension
	\[N\rightarrow P_{p-1}\rightarrow\cdots\rightarrow P_0\twoheadrightarrow M\]
	representing $f$ with $P_{p-2},\dots, P_0$ projective-injective, and $f$ is a Yoneda unit if and only if $P_{p-1}$ is also projective-injective.
\end{remark}

For any $\Lambda$-bimodule $M$, there is a natural map
\begin{equation*}\label{mapa}
	\htate{n}{\Lambda,M}\longrightarrow\hh{0}{\modulesfp{\Lambda},\exttate_{\Lambda}^n(-,-\otimes M)}
\end{equation*}
defined as
\[(\Omega^n(\Lambda)\stackrel{f}{\rightarrow}M)\mapsto % (-1)^{\frac{n(n+1)}{2}}
\left(L\mapsto \left(\Omega^n(L)=L\otimes\Omega^n(\Lambda)\stackrel{L\otimes f}{\longrightarrow}L\otimes M\right)\right).\]
Let $\sigma\colon\Lambda\rightarrow\Lambda$ be an automorphism. % The degree $0$ part of the graded module category $\modules{\Lambda(\sigma)}$ is the ungraded module category $\modules{\Lambda}$, and similarly in the finitely presented case, compare \cite[\S3]{muro_first_2019}. The inclusion of the degree $0$ part can be identified with the extension of scalars along $i\colon \Lambda\subset\Lambda(\sigma)$. Hence $\modules{\Lambda(\sigma)}$ is a graded Frobenius abelian category, since such properties are detected in the degree $0$ part. In particular, Tate or stable $\ext$ graded functors $\exttate_{\Lambda(\sigma)}^{n,*}$ are defined as above, $n\in\mathbb Z$. Moreover, $\exttate_{\Lambda(\sigma)}^{n,*}=\exttate_{\Lambda}^{n,*}(-,-\otimes\Lambda(\sigma))$, as in the non-Tate case.
These maps %\eqref{mapa} 
assemble to a map
\begin{equation}\label{epsilon}
\varepsilon\colon \htate{\star,*}{\Lambda,\Lambda(\sigma)}\longrightarrow\hh{0,*}{\modulesfp{\Lambda},\exttate_{\Lambda}^{\star,*}(-,-\otimes\Lambda(\sigma))}.
\end{equation}
%satisfying $\epsilon(f\smile g)=\epsilon(g)\epsilon(f)$, where the product on the right is Yoneda's. This is the same as a bigraded algebra morphism from an bigraded algebra structure on the source which is isomorphic to the opposite of the cup-product. Indeed, any graded algebra $A^*$ is isomorphic to the modified graded algebra structure on $A^*$ with product given by $a\otimes b\mapsto (-1)^{\abs{a}\abs{b}}ab$. An explicit isomorphism is given by $a\mapsto (-1)^{\frac{\abs{a}(\abs{a}+1)}{2}}a$.
The coefficient $\modulesfp{\Lambda}$-bimodule on the right is a monoid for the Yoneda composition. Indeed, since each ${}_{\sigma^q}\Lambda_1$ is left and right projective, the functor $-\otimes {}_{\sigma^q}\Lambda_1$ is exact (actually, an exact equivalence since ${}_{\sigma^q}\Lambda_1$ is invertible) and the product of $g\in \exttate_{\Lambda}^{p,q}(L,M\otimes\Lambda(\sigma))=\exttate_{\Lambda}^{p}(L,M\otimes {}_{\sigma^q}\Lambda_1)$ and $f\in \exttate_{\Lambda}^{s,t}(M,N\otimes\Lambda(\sigma))=
\exttate_{\Lambda}^{s}(M,N\otimes {}_{\sigma^t}\Lambda_1)$ is the Yoneda product of $g$ and $f\otimes{}_{\sigma^q}\Lambda_1$. In particular, an element in the target is a unit if and only if it is pointwise a Yoneda unit.

\begin{remark}\label{coefficient_bimodule}
We can regard $\modulesfp{\Lambda}$ as the degree $0$ part of $\modulesfp{\Lambda(\sigma)}$, compare \cite[\S3]{muro_first_2019}. 
Therefore, $\modulesfp{\Lambda(\sigma)}$ is a graded Frobenius abelian category and graded Tate $\exttate_{\Lambda(\sigma)}^{n,*}$ functors are defined for all $n\in\mathbb Z$ (also the extended Yoneda product). The inclusion $\modulesfp{\Lambda}\subset \modulesfp{\Lambda(\sigma)}$ corresponds to the extension of scalars along $\Lambda\subset\Lambda(\sigma)$. Therefore, $\exttate_{\Lambda}^{n,*}(-,-\otimes\Lambda(\sigma))$ coincides with $\exttate_{\Lambda(\sigma)}^{n,*}$ as a monoid in $\modulesfp{\Lambda}$-bimodules (also in the non-Tate case), and we can exchange them in the target of \eqref{epsilon}. 
\end{remark}

\begin{proposition}\label{edge_units}
	If $k$ is perfect, then the morphism $\varepsilon$ in \eqref{epsilon} preserves and reflects units for $\star>0$.
\end{proposition}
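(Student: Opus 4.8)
The plan is to reduce both halves of the statement to a single module‑theoretic claim by means of one functoriality remark, so that ``preserves units'' becomes essentially formal and ``reflects units'' is where all the work — and all the use of perfectness — sits. First I would record the following. For any $L\in\modulesfp\Lambda$, the functor $L\otimes-\colon\bimp\Lambda\to\modulesfp\Lambda$ is exact, because every object of $\bimp\Lambda$ is flat as a left $\Lambda$‑module, and it carries projective‑injective objects to projective‑injective objects, since a direct summand of $(\Lambda\otimes_k\Lambda)^{(I)}$ is sent to a direct summand of $(L\otimes_k\Lambda)^{(I)}$. Hence $L\otimes-$ induces a triangulated functor $\bimpst\Lambda\to\modulesfpst\Lambda$ that commutes with the syzygy functors; in particular it preserves isomorphisms and commutes with the formation of cones.

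Now I would fix a bidegree $(p,q)$ with $p>0$ and take $f\in\htate{p,q}{\Lambda,\Lambda(\sigma)}=\homst_{\Lambda^\env}(\Omega^p(\Lambda),{}_{\sigma^q}\Lambda_1)$ via the identification of Proposition~\ref{units}; let $N\in\bimp\Lambda$ be a cone of $f$ in $\bimpst\Lambda$. By Proposition~\ref{units}, $f$ is a unit if and only if it is an isomorphism in $\bimpst\Lambda$, i.e.~if and only if $N$ is projective over $\Lambda^\env$ (equivalently $N=0$ in $\bimpst\Lambda$). On the other hand, by the definition of $\varepsilon$ in \eqref{epsilon}, the component $\varepsilon(f)_L\in\exttate^p_\Lambda(L,L\otimes{}_{\sigma^q}\Lambda_1)$ is the morphism $L\otimes f\colon\Omega^p(L)=L\otimes\Omega^p(\Lambda)\to L\otimes{}_{\sigma^q}\Lambda_1$ in $\modulesfpst\Lambda$, which by Remark~\ref{yoneda_extended} is a Yoneda unit exactly when it is an isomorphism there, i.e.~when its cone $L\otimes N$ is projective over $\Lambda$. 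Since units in the target of $\varepsilon$ are precisely the pointwise Yoneda units, $\varepsilon(f)$ is a unit if and only if $L\otimes N$ is projective over $\Lambda$ for every $L\in\modulesfp\Lambda$. The functoriality remark makes the implication ``$f$ unit $\Rightarrow$ $\varepsilon(f)$ unit'' immediate, and the whole proposition comes down to the claim: \emph{if $N\in\bimp\Lambda$ and $L\otimes N$ is projective over $\Lambda$ for every $L\in\modulesfp\Lambda$, then $N$ is projective over $\Lambda^\env$.}

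To prove the claim I would apply the hypothesis to $L=\bar\Lambda:=\Lambda/\rad\Lambda$, so that the $(\bar\Lambda,\Lambda)$‑bimodule $\bar N:=\bar\Lambda\otimes N=N/(\rad\Lambda)N$ is projective as a right $\Lambda$‑module. Here perfectness of $k$ enters: $\bar\Lambda$ is then a separable $k$‑algebra, so by the standard ascent of separability a separability idempotent of $\bar\Lambda$ yields a $(\bar\Lambda\otimes_k\Lambda^\op)$‑linear splitting of the multiplication $\bar\Lambda\otimes_k\bar N\twoheadrightarrow\bar N$; since $\bar\Lambda\otimes_k\bar N$ is projective over $\bar\Lambda\otimes_k\Lambda^\op$ (a summand of a free module, as $\bar N$ is right $\Lambda$‑projective), $\bar N$ is projective over $\bar\Lambda\otimes_k\Lambda^\op$. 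Next I would take a projective cover $\pi\colon Q\twoheadrightarrow N$ over $\Lambda^\env$, note that $K:=\ker\pi$ lies in $\bimp\Lambda$ and inside $\rad(\Lambda^\env)Q=(\rad\Lambda)Q+Q(\rad\Lambda)$ (using $\rad(\Lambda^\env)=\rad(\Lambda)\otimes_k\Lambda^\op+\Lambda\otimes_k\rad(\Lambda^\op)$, again because $\bar\Lambda$ is separable), and apply the exact functor $\bar\Lambda\otimes-$ to obtain a short exact sequence $0\to\bar\Lambda\otimes K\to Q/(\rad\Lambda)Q\to\bar N\to 0$ of $(\bar\Lambda\otimes_k\Lambda^\op)$‑modules. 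Here $Q/(\rad\Lambda)Q$ is projective over $\bar\Lambda\otimes_k\Lambda^\op$ and the image of $K$ is contained in $(Q/(\rad\Lambda)Q)(\rad\Lambda)=\rad(\bar\Lambda\otimes_k\Lambda^\op)\cdot(Q/(\rad\Lambda)Q)$, so this sequence is a projective cover of $\bar N$ over $\bar\Lambda\otimes_k\Lambda^\op$. As $\bar N$ is already projective there, the cover is an isomorphism, hence $\bar\Lambda\otimes K=K/(\rad\Lambda)K=0$; then $K=(\rad\Lambda)K$ forces $K=0$ since $\rad\Lambda$ is nilpotent and $K$ is finite‑dimensional. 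Thus $\pi$ is an isomorphism, $N$ is projective over $\Lambda^\env$, and the claim, hence the proposition, follows.

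The hard part is exactly this last claim, and inside it the passage from projectivity of the bimodule $\bar N$ over the ``one‑sided reduced'' algebra $\bar\Lambda\otimes_k\Lambda^\op$ to projectivity of $N$ over the full enveloping algebra $\Lambda^\env$; this is precisely where perfectness of $k$ is indispensable, since it is what makes $\bar\Lambda$ (and $\bar\Lambda\otimes_k\bar\Lambda^\op$) separable, and one should expect the comparison to break down for non‑separable $\Lambda$, in line with the non‑uniqueness phenomena discussed in the introduction.
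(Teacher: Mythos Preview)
Your reduction is identical to the paper's: both arrive at the claim that a bimodule $N\in\bimp{\Lambda}$ is projective over $\Lambda^\env$ as soon as $L\otimes_\Lambda N$ is right $\Lambda$-projective for every $L\in\modulesfp{\Lambda}$. The paper phrases this via extensions (Remark~\ref{units_as_extensions}) rather than cones, but that is cosmetic; your cone $N$ plays the same role as the module $P_{n-1}$ in the paper's representing extension.

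Where you diverge is in the treatment of this key claim. The paper simply invokes \cite[Theorem~3.1]{auslander_theorem_1991}. You instead give a direct argument: reduce modulo $\rad\Lambda$ on the left, use separability of $\bar\Lambda$ (this is exactly where perfectness enters) to upgrade right $\Lambda$-projectivity of $\bar N$ to projectivity over $\bar\Lambda\otimes_k\Lambda^\op$, and then a projective-cover/Nakayama argument over $\Lambda^\env$ kills the syzygy $K$. Your proof is correct and has the advantage of being self-contained, making the role of perfectness transparent; the paper's citation, on the other hand, records that the statement is a known instance of the Auslander--Reiten theory of the dual of the transpose, which may be useful context for a reader. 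One minor point worth making explicit in your write-up: the exactness of $\bar\Lambda\otimes_\Lambda-$ on the sequence $0\to K\to Q\to N\to 0$ uses that $N$ is left $\Lambda$-projective (so the relevant $\tor$ vanishes), which you have but do not state at that step.
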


\begin{proof}
	Recall from Remark \ref{units_as_extensions} that any $x\in \htate{n,s}{\Lambda,\Lambda(\sigma)}$ with $n>0$ can be represented by an extension of finite-dimensional $\Lambda$-bimodules
	\[{}_{\sigma^s}\Lambda_1\hookrightarrow P_{n-1}\rightarrow\cdots\rightarrow P_0\twoheadrightarrow\Lambda\]
	with $P_0,\dots, P_{n-2}$ projective. Moreover, $x$ is a unit if and only if $P_{n-1}$ is also projective. All bimodules in this sequence are projective as left or right $\Lambda$-modules. 
	
	For each finitely presented right $\Lambda$-module $M$, $\varepsilon(x)(M)\in \exttate^{n}_{\Lambda}(M,M\otimes {}_{\sigma^s}\Lambda_1)$ is represented by the extension
	\[M\otimes{}_{\sigma^s}\Lambda_1\hookrightarrow M\otimes P_{n-1}\rightarrow\cdots\rightarrow M\otimes P_0\twoheadrightarrow  M\otimes\Lambda\cong M\]
	obtained by applying $M\otimes-$ to the bimodule extension. All right $\Lambda$-modules $M\otimes P_i$ are projective for $0\leq i\leq n-2$. Moreover, $\varepsilon(x)$ is a unit if and only if each $\varepsilon(x)(M)$ is a Yoneda unit, and this happens if and only if $M\otimes P_{n-1}$ is also a projective right $\Lambda$-module for all $M$, see Remark \ref{yoneda_extended}. This clearly holds if $P_{n-1}$ is projective. The converse is also true when $k$ is perfect by \cite[Theorem 3.1]{auslander_theorem_1991}.	
\end{proof}

\section{Edge units}\label{edge_units_section}

Throughout this section, we will assume that the ground field $k$ is a perfect field, since we will derive consequences of Proposition \ref{edge_units}. Moreover, $\Lambda$ is a Frobenius algebra and $\sigma\colon\Lambda\to\Lambda$ is an algebra automorphism. Recall also that, in this section, like in the previous one, $\otimes$ stands for $\otimes_{\Lambda}$.

We defined in \cite[Proposition 5.1]{muro_first_2019} a first quadrant spectral sequence
\begin{equation}\label{spectral_graded}
	E_2^{p,q}=\hh{p,*}{\modulesfp{\Lambda(\sigma)},\ext_{\Lambda(\sigma)}^{q,*}}\Longrightarrow\hh{p+q,*}{\Lambda(\sigma),\Lambda(\sigma)}.
\end{equation}
It has edge morphisms
\[\hh{\star,*}{\Lambda(\sigma),\Lambda(\sigma)}\longrightarrow
\hh{0,*}{\modulesfp{\Lambda(\sigma)},{\ext}^{\star,*}_{\Lambda(\sigma)}}.\]

\begin{definition}
	An element in $\hh{\star,*}{\Lambda(\sigma),\Lambda(\sigma)}$ is an \emph{edge unit} if its image along 
	\[\hh{\star,*}{\Lambda(\sigma),\Lambda(\sigma)}\stackrel{\text{edge}}\longrightarrow
	\hh{0,*}{\modulesfp{\Lambda(\sigma)},{\ext}^{\star,*}_{\Lambda(\sigma)}}
	\!\!\stackrel{\text{comp.}}{\longrightarrow}\!\!\hh{0,*}{\modulesfp{\Lambda(\sigma)},\exttate^{\star,*}_{\Lambda(\sigma)}}\]
	is a unit.
\end{definition}

Note that it is really crucial to get to Tate $\ext$ coefficients. Otherwise, there would only be  edge units in horizontal degree $\star=0$.

There is another spectral sequence
\begin{equation}\label{spectral_ungraded}
E_2^{p,q}=\hh{p,*}{\modulesfp{\Lambda},\ext_{\Lambda(\sigma)}^{q,*}}
\Longrightarrow
\hh{p+q,*}{\Lambda,\Lambda(\sigma)}
\end{equation}
with ungraded first variables, see \cite[Remark 5.4]{muro_first_2019}.  It has edge morphisms
\begin{equation*}%\label{edge_critical_restricted}
\hh{\star,*}{\Lambda,\Lambda(\sigma)}\longrightarrow
\hh{0,*}{\modulesfp{\Lambda},{\ext}^{\star,*}_{\Lambda(\sigma)}}.
\end{equation*}

\begin{definition}
	An element in $\hh{\star,*}{\Lambda,\Lambda(\sigma)}$ is an \emph{edge unit} if its image along \[\hh{\star,*}{\Lambda,\Lambda(\sigma)}\stackrel{\text{edge}}\longrightarrow
	\hh{0,*}{\modulesfp{\Lambda},{\ext}^{\star,*}_{\Lambda(\sigma)}}
	\stackrel{\text{comp.}}{\longrightarrow}\hh{0,*}{\modulesfp{\Lambda},\exttate^{\star,*}_{\Lambda(\sigma)}}\]
	is a unit.
\end{definition}

These edge units have the following smarter characterization in positive Hochschild degree. 

\begin{proposition}\label{edge_characterization}
	For $n>0$, an element in $\hh{n,*}{\Lambda,\Lambda(\sigma)}$ is an edge unit if and only if its image along the comparison morphism \[\hh{n,*}{\Lambda,\Lambda(\sigma)}\longrightarrow\htate{n,*}{\Lambda,\Lambda(\sigma)}\]
	is a unit in $\htate{\star,*}{\Lambda,\Lambda(\sigma)}$.
\end{proposition}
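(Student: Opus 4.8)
The plan is to compare the two first-quadrant spectral sequences \eqref{spectral_graded}--type machinery attached to $\Lambda$ against the stable picture via the map $\varepsilon$ of \eqref{epsilon}. First I would unwind the definition of edge unit: an element $x\in\hh{n,*}{\Lambda,\Lambda(\sigma)}$ with $n>0$ is an edge unit precisely when its image in $\hh{0,*}{\modulesfp{\Lambda},\exttate^{\star,*}_{\Lambda(\sigma)}}$ under (edge)$\circ$(comp.) is a unit. The key observation is that this composite factors through the comparison map $\hh{n,*}{\Lambda,\Lambda(\sigma)}\to\htate{n,*}{\Lambda,\Lambda(\sigma)}$ followed by $\varepsilon$ (using Remark \ref{coefficient_bimodule} to identify $\exttate_{\Lambda}^{\star,*}(-,-\otimes\Lambda(\sigma))$ with $\exttate^{\star,*}_{\Lambda(\sigma)}$ as monoids in $\modulesfp{\Lambda}$-bimodules, so that "unit" means the same thing on both sides). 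The edge morphism of the spectral sequence \eqref{spectral_ungraded} at $E_\infty^{0,n}\hookrightarrow E_2^{0,n}=\hh{0,*}{\modulesfp{\Lambda},\ext^{n,*}_{\Lambda(\sigma)}}$ is, by its very construction in \cite[Remark 5.4]{muro_first_2019}, given pointwise on a module $M$ by restricting a Hochschild class along $\C T(X,-)$-type functoriality, i.e.~it sends $x$ to the assignment $M\mapsto \varepsilon(x)(M)$ before stabilization; post-composing with the comparison map $\ext^{n,*}_{\Lambda(\sigma)}\to\exttate^{n,*}_{\Lambda(\sigma)}$ turns this into $\varepsilon$ applied to the stabilization of $x$. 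Once this factorization is in place, the statement is immediate: $x$ is an edge unit iff $\varepsilon(\bar x)$ is a unit (where $\bar x$ is the image in $\htate{n,*}{\Lambda,\Lambda(\sigma)}$), and by Proposition \ref{edge_units} — which uses that $k$ is perfect, the standing assumption of this section — $\varepsilon$ preserves and reflects units in horizontal degree $\star>0$, so $\varepsilon(\bar x)$ is a unit iff $\bar x$ is a unit in $\htate{\star,*}{\Lambda,\Lambda(\sigma)}$.

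So the skeleton is: (i) show the composite defining "edge unit" equals $\varepsilon\circ(\text{comparison to Tate})$ in horizontal degree $n>0$; (ii) invoke Proposition \ref{edge_units} to trade $\varepsilon$-units for $\htate{}{}{}$-units; (iii) conclude. Step (ii) is a black box from the previous section, and step (iii) is formal.

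The main obstacle is step (i), the identification of the edge morphism of \eqref{spectral_ungraded} with the map-to-$\varepsilon$ construction. This is where one must be careful: the spectral sequence \eqref{spectral_ungraded} was built in \cite{muro_first_2019} from a specific filtration (the one coming from the inclusion $\Lambda\subset\Lambda(\sigma)$ regarded via $\modulesfp{\Lambda}\subset\modulesfp{\Lambda(\sigma)}$, or from a Grothendieck-type change-of-rings resolution), and its $E_2^{0,q}$ edge map is the natural transformation "evaluate a Hochschild cocycle of $\Lambda$ on representable modules". I would check, by tracing through the construction of \eqref{spectral_ungraded} in \cite[Remark 5.4]{muro_first_2019} together with the definition of the natural map $\htate{n}{\Lambda,M}\to\hh{0}{\modulesfp{\Lambda},\exttate_\Lambda^n(-,-\otimes M)}$ preceding \eqref{epsilon}, that the square
\begin{center}
\begin{tikzcd}[column sep=large]
\hh{n,*}{\Lambda,\Lambda(\sigma)}\arrow[r,"\text{edge}"]\arrow[d,"\text{comp.}"']&
\hh{0,*}{\modulesfp{\Lambda},\ext^{\star,*}_{\Lambda(\sigma)}}\arrow[d,"\text{comp.}"]\\
\htate{n,*}{\Lambda,\Lambda(\sigma)}\arrow[r,"\varepsilon"']&
\hh{0,*}{\modulesfp{\Lambda},\exttate^{\star,*}_{\Lambda(\sigma)}}
\end{tikzcd}
\end{center}
commutes. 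Given the level of explicitness already present (the comparison maps $\ext\to\exttate$ are isomorphisms in positive degree, and $\varepsilon$ is defined by the very same "restrict to representables" recipe), this should reduce to a diagram chase on cochain representatives rather than anything genuinely hard; but it is the only place where there is real content, so I would present it carefully rather than waving at it.
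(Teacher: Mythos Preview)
Your proposal is correct and follows essentially the same approach as the paper. The paper's proof is exactly the commutative square you draw, followed by an appeal to Proposition \ref{edge_units}; the only difference is that the paper cites the commutativity of that square directly from \cite[Remark 5.7]{muro_first_2019} rather than verifying it by a cochain-level diagram chase as you propose.
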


\begin{proof}
	By \cite[Remark 5.7]{muro_first_2019}, the following square commutes, 
	\begin{equation*}%\label{square_comparison}
	\begin{tikzcd}
	\hh{n,*}{\Lambda,\Lambda(\sigma)}\arrow[r,"\text{\scriptsize edge}"]\arrow[d,"\text{comparison}"']&\hh{0,*}{\modulesfp{\Lambda},\ext^{n,*}_{\Lambda(\sigma)}}\arrow[d,"\text{comparison}"]\\
	\htate{n,*}{\Lambda,\Lambda(\sigma)}\arrow[r,"\varepsilon"]&\hh{0,*}{\modulesfp{\Lambda},\exttate^{n,*}_{\Lambda(\sigma)}}
	\end{tikzcd}
	\end{equation*}
	Here, the bottom horizontal arrow is the horizontal degree $n$ part of \eqref{epsilon} (see Remark \ref{coefficient_bimodule}),  which preserves and reflects units for $n>0$ by Proposition \ref{edge_units}. Hence, we are done.
\end{proof}

\begin{proposition}\label{ss_comparison_edge}
	The morphism induced by the inclusion $i\colon\Lambda\subset\Lambda(\sigma)$,
	\[i^*\colon \hh{n,*}{\Lambda(\sigma), \Lambda(\sigma)}\longrightarrow\hh{n,*}{\Lambda, \Lambda(\sigma)},\]
	preserves and reflects edge units.
\end{proposition}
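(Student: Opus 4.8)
The plan is to compare the two spectral sequences \eqref{spectral_graded} and \eqref{spectral_ungraded} via the morphism of spectral sequences induced by restriction of scalars along $i\colon\Lambda\subset\Lambda(\sigma)$, and then to use the smarter characterization of edge units from Proposition \ref{edge_characterization} to reduce the statement to a question about the comparison maps into Hochschild--Tate cohomology. First I would observe that $i$ induces a map from the spectral sequence \eqref{spectral_graded} to \eqref{spectral_ungraded}: on abutments it is precisely $i^*\colon\hh{n,*}{\Lambda(\sigma),\Lambda(\sigma)}\to\hh{n,*}{\Lambda,\Lambda(\sigma)}$, and on $E_2$-pages it is the map $\hh{p,*}{\modulesfp{\Lambda(\sigma)},\ext^{q,*}_{\Lambda(\sigma)}}\to\hh{p,*}{\modulesfp{\Lambda},\ext^{q,*}_{\Lambda(\sigma)}}$ induced by restricting the coefficient bimodule along the degree-$0$ inclusion $\modulesfp{\Lambda}\subset\modulesfp{\Lambda(\sigma)}$ (see Remark \ref{coefficient_bimodule}). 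Consequently the edge morphisms fit into a commuting square, so $i^*$ is compatible with taking edges. This already shows that $i^*$ preserves edge units; the content is that it reflects them.

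For the positive Hochschild degree case $n>0$, I would invoke Proposition \ref{edge_characterization} (for both $\Lambda(\sigma)$ and $\Lambda$) to translate ``edge unit'' into ``unit in Hochschild--Tate cohomology'' after applying the comparison maps $\hh{n,*}{-,\Lambda(\sigma)}\to\htate{n,*}{-,\Lambda(\sigma)}$; note that for the graded algebra $\Lambda(\sigma)$ one has the analogous spectral sequence \eqref{spectral_graded} and, by the same argument as in Proposition \ref{edge_characterization} together with Proposition \ref{edge_units}, an edge unit of $\hh{n,*}{\Lambda(\sigma),\Lambda(\sigma)}$ is exactly an element whose image in $\htate{n,*}{\Lambda,\Lambda(\sigma)}$ (restricting scalars to $\Lambda$) is a unit. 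So both notions of edge unit are detected on the same target $\htate{\star,*}{\Lambda,\Lambda(\sigma)}$ via compatible comparison maps, and the square
\begin{equation*}
\begin{tikzcd}
\hh{n,*}{\Lambda(\sigma),\Lambda(\sigma)}\arrow[r,"i^*"]\arrow[d]&\hh{n,*}{\Lambda,\Lambda(\sigma)}\arrow[d]\\
\htate{n,*}{\Lambda(\sigma),\Lambda(\sigma)}\arrow[r,"i^*"]&\htate{n,*}{\Lambda,\Lambda(\sigma)}
\end{tikzcd}
\end{equation*}
commutes. Chasing this square, $x\in\hh{n,*}{\Lambda(\sigma),\Lambda(\sigma)}$ has $i^*(x)$ an edge unit iff the image of $i^*(x)$ in $\htate{n,*}{\Lambda,\Lambda(\sigma)}$ is a unit iff (via the bottom $i^*$ applied to the image of $x$) the image of $x$ in $\htate{n,*}{\Lambda(\sigma),\Lambda(\sigma)}$ restricts to a unit in $\htate{n,*}{\Lambda,\Lambda(\sigma)}$, which is precisely the characterization of $x$ being an edge unit. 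For the degenerate case $n=0$, I would argue separately and more directly: in horizontal degree $0$ the edge morphism is the canonical map to the $E_2^{0,0}$-entry, and $i^*$ on $E_2^{0,0}$ is again restriction of the coefficient bimodule along $\modulesfp{\Lambda}\subset\modulesfp{\Lambda(\sigma)}$; one checks that an element of $\hh{0,*}{\modulesfp{\Lambda(\sigma)},\exttate^{0,*}_{\Lambda(\sigma)}}$ is a unit iff its restriction to $\modulesfp{\Lambda}$ is, since units are detected pointwise as Yoneda units and $\modulesfp{\Lambda}$ already contains the degree-$0$ objects which generate everything under shifts.

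The main obstacle I anticipate is bookkeeping the compatibilities: verifying that $i$ genuinely induces a morphism of spectral sequences in the sense used in \cite{muro_first_2019} (so that the edge squares commute), and confirming that the ``$n>0$'' characterization of edge units holds verbatim for the graded algebra $\Lambda(\sigma)$ with scalars restricted to $\Lambda$ — this needs Proposition \ref{edge_units} applied in the graded setting and the compatibility of the comparison maps with restriction of scalars. Once these functoriality statements are in hand, the unit-detection argument is a short diagram chase, and the perfectness of $k$ enters only through Proposition \ref{edge_units} (hence through \cite[Theorem 3.1]{auslander_theorem_1991}), exactly as in the proof of Proposition \ref{edge_characterization}.
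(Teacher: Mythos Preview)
Your overall setup is right: the key is the morphism of spectral sequences from \eqref{spectral_graded} to \eqref{spectral_ungraded} induced by $i\colon\Lambda\subset\Lambda(\sigma)$, and the resulting commutative diagram of edge maps. But your treatment of the case $n>0$ is circular. You assert that an element of $\hh{n,*}{\Lambda(\sigma),\Lambda(\sigma)}$ is an edge unit if and only if its image in $\htate{n,*}{\Lambda,\Lambda(\sigma)}$ is a unit, claiming this follows ``by the same argument as in Proposition~\ref{edge_characterization}''. However, that characterization for $\Lambda(\sigma)$ is precisely the corollary that the paper derives \emph{from} Proposition~\ref{ss_comparison_edge}; it is not available beforehand. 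The proof of Proposition~\ref{edge_characterization} relies on the map $\varepsilon$ of \eqref{epsilon} and on Proposition~\ref{edge_units}, both of which are set up for the ungraded $\Lambda$, not for $\Lambda(\sigma)$. Your commutative square also invokes $\htate{n,*}{\Lambda(\sigma),\Lambda(\sigma)}$, an object the paper never defines.

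The irony is that your ``degenerate case $n=0$'' argument is essentially the paper's actual proof, and it works uniformly for all $n$. The paper stays entirely at the level of the $\hh{0,*}$ targets: one has the commuting square whose rightmost vertical arrow is
\[
\hh{0,*}{\modulesfp{\Lambda(\sigma)},\exttate^{\star,*}_{\Lambda(\sigma)}}\longrightarrow
\hh{0,*}{\modulesfp{\Lambda},\exttate^{\star,*}_{\Lambda(\sigma)}},
\]
induced by restriction along $\modulesfp{\Lambda}\subset\modulesfp{\Lambda(\sigma)}$. This map is injective (Hochschild degree $0$, via Proposition~\ref{graded_ungraded_long_exact_sequence}) and preserves and reflects units because units are detected pointwise as Yoneda units and every object of $\modulesfp{\Lambda(\sigma)}$ is a shift of one in $\modulesfp{\Lambda}$ (the paper cites \cite[Corollary 3.4]{muro_first_2019} for this). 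That is all that is needed; no detour through $\htate{\star,*}{\Lambda(\sigma),\Lambda(\sigma)}$ or through Proposition~\ref{edge_characterization} is required. Drop your $n>0$ branch and promote your $n=0$ argument to the whole proof.
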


\begin{proof}
	By \cite[Remark 7.4]{muro_first_2019}, there is a morphism of spectral sequences from \eqref{spectral_graded} to \eqref{spectral_ungraded} induced by the inclusion of the degree $0$ part $\Lambda\subset\Lambda(\sigma)$, hence we have a commutative diagram
	\begin{center}
		\begin{tikzcd}[column sep = 11pt]
		\hh{\star,*}{\Lambda(\sigma),\Lambda(\sigma)}\ar[r,"\text{edge}"]\ar[d]&
		\hh{0,*}{\modulesfp{\Lambda(\sigma)},{\ext}^{\star,*}_{\Lambda(\sigma)}}
		\ar[r,"\text{comp.}"]\ar[d, hook]&\hh{0,*}{\modulesfp{\Lambda(\sigma)},\exttate^{\star,*}_{\Lambda(\sigma)}}\ar[d, hook]\\
		\hh{\star,*}{\Lambda,\Lambda(\sigma)}\ar[r,"\text{edge}"]&
		\hh{0,*}{\modulesfp{\Lambda},{\ext}^{\star,*}_{\Lambda(\sigma)}}
		\ar[r,"\text{comp.}"]&\hh{0,*}{\modulesfp{\Lambda},\exttate^{\star,*}_{\Lambda(\sigma)}}
		\end{tikzcd}
	\end{center}
	whose vertical arrows are induced by $\Lambda\subset\Lambda(\sigma)$. The two last vertical arrows are injective because they are in Hochschild degree $0$, see Proposition \ref{graded_ungraded_long_exact_sequence}. Moreover, the last one preserves and reflects units by the first paragraph of the proof of \cite[Corollary 3.4]{muro_first_2019}. This suffices.
\end{proof}

\begin{corollary}
	For $n>0$, an element in $\hh{n,*}{\Lambda(\sigma),\Lambda(\sigma)}$ is an edge unit if and only if its image along 
	\[\hh{n,*}{\Lambda(\sigma),\Lambda(\sigma)}\stackrel{i^*}\longrightarrow
	\hh{n,*}{\Lambda,\Lambda(\sigma)}
	\stackrel{\text{comp.}}{\longrightarrow}\htate{n,*}{\Lambda,\Lambda(\sigma)}\]
	is a unit in $\htate{\star,*}{\Lambda,\Lambda(\sigma)}$.
\end{corollary}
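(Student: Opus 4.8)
The plan is to deduce this corollary by chaining together the two preceding results, with no new input required. Fix $n>0$ and an element $x\in\hh{n,*}{\Lambda(\sigma),\Lambda(\sigma)}$. First I would apply Proposition \ref{ss_comparison_edge}: since $i^*\colon\hh{n,*}{\Lambda(\sigma),\Lambda(\sigma)}\to\hh{n,*}{\Lambda,\Lambda(\sigma)}$ preserves and reflects edge units, $x$ is an edge unit if and only if $i^*(x)$ is an edge unit in $\hh{n,*}{\Lambda,\Lambda(\sigma)}$. Then, because $n>0$, I would invoke Proposition \ref{edge_characterization}, which characterizes the edge units of $\hh{n,*}{\Lambda,\Lambda(\sigma)}$ as precisely those elements whose image under the comparison morphism $\hh{n,*}{\Lambda,\Lambda(\sigma)}\longrightarrow\htate{n,*}{\Lambda,\Lambda(\sigma)}$ is a unit in $\htate{\star,*}{\Lambda,\Lambda(\sigma)}$. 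Concatenating the two equivalences gives exactly the assertion: $x$ is an edge unit if and only if the comparison image of $i^*(x)$ is a unit.

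The only point that needs a word of justification — and it is immediate — is that the composite $\hh{n,*}{\Lambda(\sigma),\Lambda(\sigma)}\stackrel{i^*}{\longrightarrow}\hh{n,*}{\Lambda,\Lambda(\sigma)}\stackrel{\text{comp.}}{\longrightarrow}\htate{n,*}{\Lambda,\Lambda(\sigma)}$ displayed in the statement is literally the composite of the maps appearing in Propositions \ref{ss_comparison_edge} and \ref{edge_characterization}, so the two equivalences do glue. There is no genuine obstacle here: all of the substantive work — the perfectness hypothesis, the Auslander-type projectivity criterion feeding Proposition \ref{edge_units}, the comparison of the two spectral sequences \eqref{spectral_graded} and \eqref{spectral_ungraded}, and the behaviour of the Hochschild-degree-zero terms — has already been carried out in those propositions, so this corollary is a pure formality and can be proved in a line or two.
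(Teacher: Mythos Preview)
Your proposal is correct and matches the paper's approach exactly: the corollary is stated without proof in the paper, as it follows immediately by combining Proposition~\ref{ss_comparison_edge} with Proposition~\ref{edge_characterization}, which is precisely the chain of equivalences you describe.
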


In order to conclude this section, we record a property of edge units which is crucial for later computations.

\begin{lemma}\label{non_singularity}
	If $x\in\hh{3,-1}{\Lambda(\sigma),\Lambda(\sigma)}$ is an edge unit, then the map
	\[x\cdot -\colon \hh{p,q}{\Lambda(\sigma),\Lambda(\sigma)}\longrightarrow\hh{p+3,q-1}{\Lambda(\sigma),\Lambda(\sigma)}\]
	given by left multiplication by $x$ 
	is an isomorphism for all $p\geq 2$ and $q\in\mathbb Z$ and an epimorphism for $p=1$ and all $q\in\mathbb Z$.
\end{lemma}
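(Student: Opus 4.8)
The plan is to transport the statement along the long exact sequence of Proposition \ref{graded_ungraded_long_exact_sequence} (applied to $\C T=\Lambda$, $\Sigma=\sigma$, $\C T_\Sigma=\Lambda(\sigma)$), where the edge-unit hypothesis acquires a transparent meaning. Write $A^{\star,*}=\hh{\star,*}{\Lambda(\sigma),\Lambda(\sigma)}$ and $B^{\star,*}=\hh{\star,*}{\Lambda,\Lambda(\sigma)}$; these are bigraded algebras, the coefficients $\Lambda(\sigma)$ being a monoid in the appropriate bimodule category, and $i^*\colon A\to B$ is a bigraded algebra morphism. The first step is to show that left multiplication by $\bar x:=i^*(x)\in B^{3,-1}$ is an isomorphism $B^{p,q}\to B^{p+3,q-1}$ for $p\ge 1$ and an epimorphism for $p=0$. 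This follows because the comparison morphism $B^{p,*}\to\htate{p,*}{\Lambda,\Lambda(\sigma)}$ is an isomorphism for $p>0$, is onto for $p=0$, is compatible with products, and — since $x$ is an edge unit in horizontal degree $3>0$ — Propositions \ref{ss_comparison_edge} and \ref{edge_characterization} identify the image $\hat x$ of $\bar x$ in $\htate{3,*}{\Lambda,\Lambda(\sigma)}$ with a unit of the bigraded algebra $\htate{\star,*}{\Lambda,\Lambda(\sigma)}$, so $\hat x$ multiplies bijectively there.

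Next I would mount the long exact sequence
\[\cdots\to A^{n,*}\xrightarrow{\;i^*\;}B^{n,*}\xrightarrow{\;g\;}B^{n,*}\xrightarrow{\;\partial\;}A^{n+1,*}\to\cdots,\qquad g=\operatorname{id}-\Sigma_*^{-1}\Sigma^*,\]
against the same sequence shifted by $(\,\cdot+3,\,\cdot-1)$, with vertical maps given by left multiplication by $x$ on the $A$-terms and by $\bar x$ on the $B$-terms. The resulting ladder commutes: $i^*(x\cdot-)=\bar x\cdot i^*(-)$ since $i^*$ is multiplicative; $\partial(\bar x\cdot-)=x\cdot\partial(-)$ since $\partial$ is an $A$-bimodule morphism, the $A$-action on $B$ being the one through $i^*$; and $g$ commutes with $\bar x\cdot-$ because $\Sigma_*^{-1}\Sigma^*$ is a bigraded algebra endomorphism of $B$ that fixes $\bar x$, as $i^*$ lands in the equalizer of $\Sigma_*^{-1}\Sigma^*$ and the identity. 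Fixed Koszul signs in these identities do not affect kernels and images, hence are harmless. A diagram chase then finishes the argument. For surjectivity when $n\ge 1$: given $a'\in A^{n+3}$, lift $i^*(a')$ through the isomorphism $\bar x\cdot-\colon B^{n}\to B^{n+3}$ to $c$; observe $\bar x\cdot g(c)=g(i^*(a'))=0$, so $g(c)=0$ by injectivity of $\bar x\cdot-$ on $B^n$, hence $c=i^*(a)$; then $i^*(x\cdot a-a')=0$ gives $x\cdot a-a'=\partial(b)$, and since $\bar x\cdot-\colon B^{n-1}\to B^{n+2}$ is onto we may take $b=\bar x\cdot b_0$, so $x\cdot a-a'=\partial(\bar x\cdot b_0)=x\cdot\partial(b_0)$ and $a'=x\cdot(a-\partial b_0)$. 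For injectivity when $n\ge 2$: from $x\cdot a=0$ we get $\bar x\cdot i^*(a)=0$, so $i^*(a)=0$ and $a=\partial(b)$; then $\partial(\bar x\cdot b)=0$, so $\bar x\cdot b=g(b')$, and writing $b'=\bar x\cdot b''$ (again since $\bar x\cdot-\colon B^{n-1}\to B^{n+2}$ is onto) we find $\bar x\cdot b=\bar x\cdot g(b'')$; as $\bar x\cdot-$ is injective on $B^{n-1}$ (here $n-1\ge 1$), $b=g(b'')$ and $a=\partial g(b'')=0$.

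The only genuine content is the first step: the edge-unit hypothesis is precisely what forces left multiplication by $i^*(x)$ to be invertible on positive-degree Hochschild cohomology of $\Lambda$ with coefficients in $\Lambda(\sigma)$; everything afterwards is bookkeeping in the long exact sequence of Proposition \ref{graded_ungraded_long_exact_sequence}. The one point requiring care in that bookkeeping is the boundary behaviour: $\bar x\cdot-$ is epic but not monic in horizontal degree $0$, which is exactly why the conclusion at $p=1$ is surjectivity and cannot be promoted to an isomorphism by this method.
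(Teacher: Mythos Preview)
Your proof is correct and follows essentially the same approach as the paper: first use Propositions~\ref{ss_comparison_edge} and~\ref{edge_characterization} together with the comparison between Hochschild and Hochschild--Tate cohomology to show that $i^*(x)\cdot-$ is an isomorphism on $\hh{p,*}{\Lambda,\Lambda(\sigma)}$ for $p\ge 1$ and an epimorphism for $p=0$, then set up the commuting ladder of long exact sequences from Proposition~\ref{graded_ungraded_long_exact_sequence}. The only difference is that the paper concludes by invoking the five lemma, whereas you carry out the diagram chase by hand; your chase is correct and recovers exactly the ranges claimed.
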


\begin{proof}
	By the previous corollary $i^*(x)\in \hh{3,-1}{\Lambda,\Lambda(\sigma)}=\htate{3,-1}{\Lambda,\Lambda(\sigma)}$ is a unit in $\htate{\star,*}{\Lambda,\Lambda(\sigma)}$. 
	Since the Hochschild--Tate cohomology coincides with Hochschild's in  positive Hochschild degrees, and the latter surjects onto the former in Hochschild degree $0$, the map 
	\[i^*(x) \cdot-\colon \hh{p,q}{\Lambda,\Lambda(\sigma)}\longrightarrow\hh{p+3,q-1}{\Lambda,\Lambda(\sigma)}\]
	is an isomorphism for all $p\geq 1$ and $q\in\mathbb Z$ and an epimorphism for $p=0$ and all $q\in\mathbb Z$.
	
	By Proposition \ref{graded_ungraded_long_exact_sequence}, we have a map of long exact sequences
	\begin{center}
		\begin{tikzcd}[row sep=5mm, column sep=40pt]
		\vdots\arrow[d]&\vdots\arrow[d]\\
		\hh{p,q}{\Lambda(\sigma),\Lambda(\sigma)}\arrow[d, "i^*"]\arrow[r,"x \cdot-"]&\hh{p+3,q-1}{\Lambda(\sigma),\Lambda(\sigma)}\arrow[d, "i^*"]\\
		\hh{p,q}{\Lambda,\Lambda(\sigma)}\arrow[d, "\operatorname{id}-\sigma_*^{-1}\sigma^*"]\arrow[r,"i^*(x) \cdot-"]&\hh{p+3,q-1}{\Lambda,\Lambda(\sigma)}\arrow[d, "\operatorname{id}-\sigma_*^{-1}\sigma^*"]\\ 
		\hh{p,q}{\Lambda,\Lambda(\sigma)}\arrow[d,"\partial"]\arrow[r,"i^*(x) \cdot-"]&\hh{p+3,q-1}{\Lambda,\Lambda(\sigma)}\arrow[d,"\partial"]\\
		\hh{p+1,q}{\Lambda(\sigma),\Lambda(\sigma)}\arrow[d]\arrow[r,"x \cdot-"]&\hh{p+4,q-1}{\Lambda(\sigma),\Lambda(\sigma)}\arrow[d]\\
		\vdots&\vdots
		\end{tikzcd}
	\end{center}
	Hence, the proposition follows from the previous paragraph and the five lemma.
\end{proof}

\section{Lifting edge units}\label{restricted}

The goal of this section is to lift edge units along the morphism in Proposition \ref{ss_comparison_edge}, mainly in bidegree $(3,-1)$. We will use the characterization of edge units in Proposition \ref{edge_characterization}, as well as other consequences of Proposition \ref{edge_units}. Therefore, in this section the ground field $k$ will be a perfect field. We start with some consequences of results from Section \ref{hochschild_tate}.

Let $\Lambda$ be a Frobenius algebra and $\sigma\colon\Lambda\rightarrow\Lambda$ an automorphism. Recall that $i\colon\Lambda\subset\Lambda(\sigma)$ denotes the inclusion of the degree $0$ part. 
In the rest of this section, we will work in the monoidal triangulated category $\bimpst{\Lambda}$ introduced in Section \ref{hochschild_tate}, and $\otimes$ will exceptionally stand for $\otimes_{\Lambda}$. 

Recall the natural isomorphism $\zeta_{M}$ from \eqref{zeta}. The following result is an immediate consequence of Lemma \ref{sign}.

\begin{corollary}\label{sign2}
	For $n\geq 1$, the automorphism $\zeta_{\Omega^n(\Lambda)}$ of $\Omega(\Lambda)^{\otimes ^{n+1}}$ is $(-1)^n$.
\end{corollary}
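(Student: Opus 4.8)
The plan is to prove the statement by induction on $n$, using Lemma \ref{sign} as the base case and the coherence of $\zeta$ together with the anticommuting square \eqref{anti} for the inductive step. Recall that by construction $\Omega^n(\Lambda)$ is defined as the $n$-fold tensor power $\Omega(\Lambda)^{\otimes^n}$, so $\zeta_{\Omega^n(\Lambda)}$ is an automorphism of $\Omega(\Lambda)\otimes\Omega(\Lambda)^{\otimes^n}=\Omega(\Lambda)^{\otimes^{n+1}}$, and the claim is that it equals $(-1)^n$.

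For $n=1$ the statement is exactly Lemma \ref{sign}. For the inductive step, I would write $\Omega^{n}(\Lambda)=\Omega^{n-1}(\Lambda)\otimes\Omega(\Lambda)$ (using the associative identification $\Omega^p(\Lambda)\otimes\Omega^q(\Lambda)\cong\Omega^{p+q}(\Lambda)$) and apply the first coherence diagram for $\zeta$ displayed after \eqref{zeta}, with $M=\Omega^{n-1}(\Lambda)$ and $N=\Omega(\Lambda)$. That triangle gives
\[
\zeta_{\Omega^{n-1}(\Lambda)\otimes\Omega(\Lambda)}=(1_{\Omega^{n-1}(\Lambda)}\otimes\zeta_{\Omega(\Lambda)})\circ(\zeta_{\Omega^{n-1}(\Lambda)}\otimes 1_{\Omega(\Lambda)}).
\]
By the inductive hypothesis $\zeta_{\Omega^{n-1}(\Lambda)}=(-1)^{n-1}$, so $\zeta_{\Omega^{n-1}(\Lambda)}\otimes 1_{\Omega(\Lambda)}=(-1)^{n-1}$, and by Lemma \ref{sign} $\zeta_{\Omega(\Lambda)}=-1$, so $1_{\Omega^{n-1}(\Lambda)}\otimes\zeta_{\Omega(\Lambda)}=-1$. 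Multiplying, $\zeta_{\Omega^{n}(\Lambda)}=(-1)^{n-1}\cdot(-1)=(-1)^{n}$, which is the desired conclusion.

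The one point that needs care — and which I expect to be the only real subtlety — is the bookkeeping of the associativity identifications: one must check that the coherence triangle for $\zeta$ is being applied to the correct grouping of tensor factors, i.e.~that the identification $\Omega^{n}(\Lambda)\cong\Omega^{n-1}(\Lambda)\otimes\Omega(\Lambda)$ used to set up the induction is compatible with the one ($\Omega^{n+1}(\Lambda)\cong\Omega(\Lambda)\otimes\Omega^{n}(\Lambda)$) implicitly used to state $\zeta_{\Omega^n(\Lambda)}$ as an automorphism of $\Omega(\Lambda)^{\otimes^{n+1}}$. Since we are systematically suppressing associativity constraints (as the paper does), and since the coherence of $\zeta$ was asserted precisely to make such rebracketings harmless, this is routine; the anticommuting square \eqref{anti} is the $n=2$ manifestation of the sign and serves as a consistency check. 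Once the identifications are fixed, the computation is the two-line induction above.
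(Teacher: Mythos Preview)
Your proof is correct and is precisely the argument the paper has in mind: the paper states the corollary as ``an immediate consequence of Lemma \ref{sign}'', and the induction you write out via the coherence triangle $\zeta_{M\otimes N}=(1_M\otimes\zeta_N)\circ(\zeta_M\otimes 1_N)$ with $M=\Omega^{n-1}(\Lambda)$, $N=\Omega(\Lambda)$ is the natural way to unpack that. One cosmetic point: you announce the anticommuting square \eqref{anti} as an ingredient in your plan, but (as you yourself note later) it plays no role in the argument---only the coherence of $\zeta$ and Lemma \ref{sign} are used---so you should drop it from the opening sentence.
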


\begin{corollary}
	If $f\colon \Omega^n(\Lambda)\rightarrow M$ is a map, $n\geq 1$, then the following diagram commutes,
	\begin{center}
		\begin{tikzcd}
		\Omega(\Lambda)\otimes \Omega(\Lambda)^{\otimes ^n}\ar[r, "(-1)^n"]\ar[d, "1\otimes f"]&
		\Omega(\Lambda)^{\otimes ^n}\otimes \Omega(\Lambda)
		\ar[d, "f\otimes 1"]
		\\
		\Omega(\Lambda)\otimes M\ar[r, "\zeta_{M}"]&
		M\otimes \Omega(\Lambda)
		\end{tikzcd}
	\end{center}
\end{corollary}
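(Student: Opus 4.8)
The plan is to deduce this corollary directly from the coherence of $\zeta$ together with Corollary \ref{sign2}, by naturality of $\zeta$. First I would observe that $\Omega^n(\Lambda)$ has been defined as the $n$-fold tensor power $\Omega(\Lambda)^{\otimes^n}$, so the map $f\colon\Omega(\Lambda)^{\otimes^n}\to M$ is literally a map out of this tensor power. Naturality of the family $\zeta$ in its subscript variable gives a commuting square
\begin{center}
	\begin{tikzcd}
	\Omega(\Lambda)\otimes \Omega(\Lambda)^{\otimes^n}\ar[r, "\zeta_{\Omega^n(\Lambda)}"]\ar[d, "1\otimes f"']&
	\Omega(\Lambda)^{\otimes^n}\otimes \Omega(\Lambda)\ar[d, "f\otimes 1"]\\
	\Omega(\Lambda)\otimes M\ar[r, "\zeta_{M}"]&
	M\otimes \Omega(\Lambda)
	\end{tikzcd}
\end{center}
in $\bimpst{\Lambda}$. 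So the only remaining point is to identify the top horizontal arrow.

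By Corollary \ref{sign2}, the automorphism $\zeta_{\Omega^n(\Lambda)}$ of $\Omega(\Lambda)^{\otimes^{n+1}}$ equals $(-1)^n$; here one uses that $\zeta_{\Omega^n(\Lambda)}\colon\Omega(\Lambda)\otimes\Omega(\Lambda)^{\otimes^n}\to\Omega(\Lambda)^{\otimes^n}\otimes\Omega(\Lambda)$ is an automorphism of $\Omega(\Lambda)^{\otimes^{n+1}}$ once we suppress the (implicit) associativity constraints, exactly as is done throughout this section. Substituting $(-1)^n$ for the top arrow in the square above yields precisely the asserted diagram.

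The main, and essentially the only, subtlety is bookkeeping: one must make sure that the source and target of $\zeta_{\Omega^n(\Lambda)}$ are correctly read as $\Omega(\Lambda)\otimes\Omega(\Lambda)^{\otimes^n}$ and $\Omega(\Lambda)^{\otimes^n}\otimes\Omega(\Lambda)$ respectively, which is the convention of \eqref{zeta} applied with subscript $M=\Omega^n(\Lambda)$, and that these agree with the two objects appearing in Corollary \ref{sign2} after suppressing associators. Since this is the same convention used to state Corollary \ref{sign2}, no genuine difficulty arises, and I expect the proof to be a two-line argument: naturality of $\zeta$ plus Corollary \ref{sign2}.

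\begin{proof}
	By construction $\Omega^n(\Lambda)=\Omega(\Lambda)^{\otimes^n}$, so $f$ is a map out of this tensor power and naturality of the isomorphisms $\zeta$ from \eqref{zeta} in the subscript variable gives a commutative square
	\begin{center}
		\begin{tikzcd}
		\Omega(\Lambda)\otimes \Omega(\Lambda)^{\otimes^n}\ar[r, "\zeta_{\Omega^n(\Lambda)}"]\ar[d, "1\otimes f"']&
		\Omega(\Lambda)^{\otimes^n}\otimes \Omega(\Lambda)\ar[d, "f\otimes 1"]\\
		\Omega(\Lambda)\otimes M\ar[r, "\zeta_{M}"]&
		M\otimes \Omega(\Lambda)
		\end{tikzcd}
	\end{center}
	in $\bimpst{\Lambda}$. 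By Corollary \ref{sign2}, the top horizontal map $\zeta_{\Omega^n(\Lambda)}$ is $(-1)^n$. Substituting this into the square yields the diagram in the statement.
\end{proof}
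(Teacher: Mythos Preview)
Your proof is correct and follows exactly the paper's approach: the paper's proof is the single line ``This follows by naturality,'' which is precisely the argument you spell out (naturality of $\zeta$ gives the square with $\zeta_{\Omega^n(\Lambda)}$ on top, and Corollary~\ref{sign2} identifies that map as $(-1)^n$).
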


This follows by naturality.

\begin{corollary}
	If $f\colon \Omega^n(\Lambda)\rightarrow M$ is a map, $n\geq 1$, then the following diagram commutes in $\bimpst{\Lambda}$,
	\begin{center}
		\begin{tikzcd}[column sep = 80]
		\Omega(\Lambda)^{\otimes ^n}\otimes \Omega(\Lambda)^{\otimes^n}\ar[r, "(-1)^{n^2}"]\ar[d, "1\otimes f"]&
		\Omega(\Lambda)^{\otimes ^n}\otimes \Omega(\Lambda)^{\otimes ^n}
		\ar[d, "f\otimes 1"]
		\\
		\Omega(\Lambda)^{\otimes ^n}\otimes M\ar[r, "\prod_{i=1}^n1^{\otimes^{i-1}}\otimes\zeta_{M}\otimes 1^{\otimes^{n-i}}"]&
		M\otimes \Omega(\Lambda)^{\otimes ^n}
		\end{tikzcd}
	\end{center}
	% In particular, if $f$ is an isomorphism then the bottom horizontal arrow coincides with $(-1)^{n^2}f\otimes f^{-1}$.
\end{corollary}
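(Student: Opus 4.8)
The plan is to deduce this from the previous corollary by an induction that migrates the copies of $\zeta_M$ across the tensor factors one at a time. Write $A=\Omega(\Lambda)$, fix $n$ and a map $f\colon A^{\otimes n}\to M$, and consider the following more flexible assertion: for every $m\geq 0$, the composite
\[
\Xi_m\colon A^{\otimes m}\otimes M\longrightarrow M\otimes A^{\otimes m},\qquad
\Xi_m=\bigl(\zeta_M\otimes 1_{A^{\otimes m-1}}\bigr)\circ\cdots\circ\bigl(1_{A^{\otimes m-1}}\otimes\zeta_M\bigr),
\]
which slides $M$ leftwards past the $m$ tensor factors $A$ and is exactly $\prod_{i=1}^{m}1^{\otimes^{i-1}}\otimes\zeta_M\otimes1^{\otimes^{m-i}}$ read with the factor $i=m$ acting first, satisfies
\[
\Xi_m\circ\bigl(1_{A^{\otimes m}}\otimes f\bigr)=(-1)^{nm}\,\bigl(f\otimes 1_{A^{\otimes m}}\bigr)\quad\text{in }\bimpst{\Lambda}.
\]
The corollary is then the special case $m=n$, since $(-1)^{n\cdot n}=(-1)^{n^2}$ and $\Xi_n$ is precisely the bottom arrow of the displayed square.

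The base case $m=1$ of this assertion is the previous corollary verbatim. For the inductive step I would split $A^{\otimes m}=A\otimes A^{\otimes m-1}$ and regroup the defining composite as $\Xi_m=\bigl(\zeta_M\otimes 1_{A^{\otimes m-1}}\bigr)\circ\bigl(1_A\otimes\Xi_{m-1}\bigr)$; this is a purely formal rewriting using only associativity of composition and functoriality of $\otimes$ (with associators suppressed, as throughout the section), and it needs nothing about the coherence of $\zeta$. Then
\[
\Xi_m\circ\bigl(1_{A^{\otimes m}}\otimes f\bigr)=\bigl(\zeta_M\otimes 1_{A^{\otimes m-1}}\bigr)\circ\Bigl(1_A\otimes\bigl(\Xi_{m-1}\circ(1_{A^{\otimes m-1}}\otimes f)\bigr)\Bigr),
\]
the inductive hypothesis rewrites the inner bracket as $(-1)^{n(m-1)}\bigl(1_A\otimes f\otimes 1_{A^{\otimes m-1}}\bigr)$, and the base case, applied to the outer copy of $A$ and tensored on the right with $1_{A^{\otimes m-1}}$, contributes a further sign $(-1)^n$. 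Multiplying, the total sign is $(-1)^{n(m-1)+n}=(-1)^{nm}$, as claimed.

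I expect no genuine obstacle: as with the two preceding corollaries, the entire content is naturality of $\zeta$ together with Corollary \ref{sign2}, now bootstrapped through $m$ copies, and the sign count is trivial arithmetic. The one point worth spelling out rather than leaving implicit is the bookkeeping around the bottom arrow: reading the ``product'' $\prod_{i=1}^{n}$ so that $M$ travels from the rightmost to the leftmost position (equivalently, the factor $i=n$ is applied first), verifying that under this reading it equals $\Xi_n$, and checking the compatible regrouping $\Xi_m=(\zeta_M\otimes 1)\circ(1\otimes\Xi_{m-1})$. Everything else is routine.
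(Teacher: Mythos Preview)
Your proof is correct and amounts to the same argument the paper has in mind, made fully explicit. The paper's one-line justification ``We use again naturality here'' is most naturally read as: the composite $\prod_{i=1}^n 1^{\otimes i-1}\otimes\zeta_{(-)}\otimes 1^{\otimes n-i}$ is natural in $M$, so applying it to $f\colon\Omega(\Lambda)^{\otimes n}\to M$ reduces the square to computing the bottom arrow at $M=\Omega(\Lambda)^{\otimes n}$, which is $(-1)^{n^2}$ since each $\zeta_{\Omega^n(\Lambda)}$ contributes $(-1)^n$ by Corollary~\ref{sign2}. Your induction on $m$ is exactly this argument unrolled one factor at a time via the previous corollary; the content is identical.
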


We use again naturality here.

\begin{corollary}
	If $f\colon \Omega^n(\Lambda)\rightarrow M$ is an isomorphism in $\bimpst{\Lambda}$, $n\geq 1$, then the following diagram commutes in this category,
	\begin{center}
		\begin{tikzcd}[column sep = 90]
		\Omega(\Lambda)^{\otimes ^n}\otimes M\ar[r, "\prod_{i=1}^n1^{\otimes^{i-1}}\otimes\zeta_{M}\otimes 1^{\otimes^{n-i}}"] \ar[d, "f\otimes 1"] &
		M\otimes \Omega(\Lambda)^{\otimes ^n} \ar[d, "1\otimes f"] \\
		M\otimes M\ar[r, "(-1)^{n^2}"] & M\otimes M
		\end{tikzcd}
	\end{center}
	% In particular, if $f$ is an isomorphism then the bottom horizontal arrow coincides with $(-1)^{n^2}f\otimes f^{-1}$.
\end{corollary}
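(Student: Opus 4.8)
The plan is to deduce this directly from the immediately preceding corollary, which asserts commutativity of the square having the same bottom arrow $\prod_{i=1}^n1^{\otimes^{i-1}}\otimes\zeta_{M}\otimes 1^{\otimes^{n-i}}$ but with a factor $\Omega(\Lambda)^{\otimes^n}$ in place of the factor $M$; the only new ingredient is now the hypothesis that $f$ is an isomorphism.

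First I would note that, since $\otimes=\otimes_\Lambda$ is functorial in each variable on $\bimpst{\Lambda}$, the morphism $1_{\Omega(\Lambda)^{\otimes^n}}\otimes f\colon \Omega(\Lambda)^{\otimes^n}\otimes\Omega(\Lambda)^{\otimes^n}\to\Omega(\Lambda)^{\otimes^n}\otimes M$ is again an isomorphism in $\bimpst{\Lambda}$. Hence it suffices to verify that the two composites around the square of the present corollary agree after precomposition with $1_{\Omega(\Lambda)^{\otimes^n}}\otimes f$, and this reduces the whole statement to an identity of maps out of $\Omega(\Lambda)^{\otimes^n}\otimes\Omega(\Lambda)^{\otimes^n}$.

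Then I would compute the two precomposites. Using the preceding corollary to rewrite $\bigl(\prod_{i}1^{\otimes^{i-1}}\otimes\zeta_{M}\otimes 1^{\otimes^{n-i}}\bigr)\circ\bigl(1_{\Omega(\Lambda)^{\otimes^n}}\otimes f\bigr)=(-1)^{n^2}\bigl(f\otimes 1_{\Omega(\Lambda)^{\otimes^n}}\bigr)$, together with the bifunctoriality identity $(1_M\otimes f)\circ(f\otimes 1_{\Omega(\Lambda)^{\otimes^n}})=f\otimes f=(f\otimes 1_M)\circ(1_{\Omega(\Lambda)^{\otimes^n}}\otimes f)$, both composites collapse to $(-1)^{n^2}(f\otimes f)$. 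Cancelling the isomorphism $1_{\Omega(\Lambda)^{\otimes^n}}\otimes f$ then yields the claimed commutativity.

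The argument has no real obstacle; the single point worth keeping in mind is that $\bimpst{\Lambda}$ has ungraded Hom-spaces, so no Koszul signs intervene when interchanging $f\otimes 1$ with $1\otimes f$. All the weight of the statement is carried by the preceding corollary (hence, ultimately, by Lemma~\ref{sign} and Corollary~\ref{sign2}) together with the invertibility of $f$.
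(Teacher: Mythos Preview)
Your proof is correct and follows essentially the same route as the paper: the paper simply stacks the square of the preceding corollary on top of the square to be proved, notes that the composite vertical arrows on each side equal $f\otimes f$ by bifunctoriality, and cancels the top vertical isomorphism $1\otimes f$. Your argument carries out precisely this cancellation in words rather than via a single large diagram.
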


\begin{proof}
	It suffices to consider the following commutative diagram of isomorphisms,
	\begin{center}
		\begin{tikzcd}[column sep = 80 ]
		 \Omega(\Lambda)^{\otimes ^n}\otimes \Omega(\Lambda)^{\otimes^n}\ar[r, "(-1)^{n^2}"]\ar[d, "1\otimes f"'] \ar[dd, bend right = 75, "f\otimes f"'] &
		\Omega(\Lambda)^{\otimes ^n}\otimes \Omega(\Lambda)^{\otimes ^n}
		\ar[d, "f\otimes 1"] \ar[dd, bend left = 75, "f\otimes f"] 
		\\ 
		\Omega(\Lambda)^{\otimes ^n}\otimes M\ar[r, "\prod_{i=1}^n1^{\otimes^{i-1}}\otimes\zeta_{M}\otimes 1^{\otimes^{n-i}}"] \ar[d, "f\otimes 1"'] &
		M\otimes \Omega(\Lambda)^{\otimes ^n} \ar[d, "1\otimes f"]  \\
		M\otimes M&M\otimes M
		\end{tikzcd}
	\end{center}
\end{proof}

The functor
\[{}_\sigma\Lambda_1\otimes -\otimes {}_1\Lambda_\sigma\colon \bimp{\Lambda}\longrightarrow \bimp{\Lambda}\]
is an exact equivalence, so it gives rise to a triangulated equivalence
\[{}_\sigma\Lambda_1\otimes -\otimes {}_1\Lambda_\sigma\colon \bimpst{\Lambda}\longrightarrow \bimpst{\Lambda}.\] 
Part of this triangulated equivalence is a natural isomorphism in $\bimpst{\Lambda}$, which shows that this functor commutes with the syzygy functor, 
\[\omega_{M}\colon\Omega({}_\sigma\Lambda_1\otimes M\otimes {}_1\Lambda_\sigma)\cong {}_\sigma\Lambda_1\otimes \Omega(M)\otimes {}_1\Lambda_\sigma.\]
It is simply given by
\[ \omega_M=\zeta_{{}_\sigma\Lambda_1}\otimes 1_{M\otimes {}_1\Lambda_\sigma}\colon 
\Omega(\Lambda)\otimes {}_\sigma\Lambda_1\otimes M\otimes {}_1\Lambda_\sigma
\cong 
{}_\sigma\Lambda_1\otimes \Omega(\Lambda)\otimes M\otimes {}_1\Lambda_\sigma.\]

We also have an exact equivalence
\[\bimp{\Lambda}\longrightarrow \bimp{\Lambda}\colon M\mapsto {}_\sigma M_\sigma,\]
which is exactly the same thing as the restriction of scalars along $\sigma\colon\Lambda\rightarrow\Lambda$. It is naturally isomorphic to the previous one via the map
\[\rho_M\colon {}_\sigma\Lambda_1\otimes M\otimes {}_1\Lambda_\sigma\cong {}_\sigma M_\sigma,
\qquad \rho_M(a\otimes x\otimes b)=axb.
\]
In addition, we obtain an induced triangulated equivalence
\[\bimpst{\Lambda}\longrightarrow \bimpst{\Lambda}\colon M\mapsto {}_\sigma M_\sigma,\]
also naturally isomorphic to the previous one in the same way, as triangulated functors. Again, part of this triangulated equivalence is a natural isomorphism in $\bimpst{\Lambda}$, which shows that this functor commutes with the syzygy functor, 
\[\bar\omega_{M}\colon \Omega({}_\sigma M_\sigma)\cong {}_\sigma\Omega(M)_\sigma.\]
The natural isomorphism of triangulated equivalences is compatible with the natural isomorphisms $\omega$ and $\bar\omega$, in the sense that the following square of natural isomorphisms commutes,
\begin{equation*}
	\begin{tikzcd}
		\Omega({}_\sigma\Lambda_1\otimes M\otimes {}_1\Lambda_\sigma)\ar[r, "\omega_M"]\ar[d,"\Omega(\rho_M)"']& {}_\sigma\Lambda_1\otimes \Omega(M)\otimes {}_1\Lambda_\sigma\ar[d,"\rho_{\Omega(M)}"]\\
		\Omega({}_\sigma M_\sigma)\ar[r, "\bar \omega_M"]&{}_\sigma\Omega(M)_\sigma
	\end{tikzcd}
\end{equation*}

Let us denote 
\begin{align*}
	\omega_{M}^{(n)}&=\prod_{i=1}^n\Omega^{i-1}(\omega_{\Omega^{n-i}(M)}),& \bar\omega_{M}^{(n)}&=\prod_{i=1}^n\Omega^{i-1}(\bar \omega_{\Omega^{n-i}(M)}).
\end{align*}
These are isomorphisms fitting in a similar diagram,
\begin{equation}\label{diag}
\begin{tikzcd}
\Omega^n({}_\sigma\Lambda_1\otimes M\otimes {}_1\Lambda_\sigma)\ar[r, "\omega_M^{(n)}"]\ar[d,"\Omega^n(\rho_M)"']& {}_\sigma\Lambda_1\otimes \Omega^n(M)\otimes {}_1\Lambda_\sigma\ar[d,"\rho_{\Omega^n(M)}"]\\
\Omega^n({}_\sigma M_\sigma)\ar[r, "\bar \omega_M^{(n)}"]&{}_\sigma\Omega^n(M)_\sigma
\end{tikzcd}
\end{equation}

\begin{proposition}\label{complicada}
	If $f\colon \Omega(\Lambda)^{\otimes^n}=\Omega^n(\Lambda)\rightarrow {}_\sigma\Lambda_1$ is an isomorphism in $\bimpst{\Lambda}$, $n\geq 1$, then the following diagram commutes in this category,
	\begin{center}
		\begin{tikzcd}
		\Omega^n(\Lambda) \ar[r, "\Omega^n(\sigma)"] & \Omega^n({}_\sigma\Lambda_\sigma) \ar[r, "\bar\omega_{\Lambda}^n"] & {}_\sigma\Omega^n(\Lambda)_\sigma \ar[d, "{{}_\sigma f_\sigma}"]\\
		{}_\sigma\Lambda_1 \ar[u, <-, "f"] \ar[rr, "(-1)^{n^2}\sigma"] & & 
		{}_{\sigma^2}\Lambda_{\sigma}
		\end{tikzcd}
	\end{center}
\end{proposition}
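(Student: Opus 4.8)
The plan is to deduce the claim from the corollary just above (the one stating that, for an \emph{isomorphism} $f\colon\Omega^n(\Lambda)\to M$ in $\bimpst{\Lambda}$, the square with bottom arrow $(-1)^{n^2}\colon M\otimes M\to M\otimes M$ commutes), applied with $M={}_\sigma\Lambda_1$, by routing everything through the natural transformation $\rho$ and the compatibility square \eqref{diag}.

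First I would factor the bimodule isomorphism $\sigma\colon\Lambda\to{}_\sigma\Lambda_\sigma$ as $\sigma=\rho_\Lambda\circ u$, where $u\colon\Lambda\to{}_\sigma\Lambda_1\otimes\Lambda\otimes{}_1\Lambda_\sigma$ is the canonical isomorphism determined by $1\mapsto 1\otimes1\otimes1$; that this assignment really is a well-defined bimodule isomorphism, and that $\rho_\Lambda\circ u=\sigma$, are short verifications with the tensor relations. Applying the functor $\Omega^n$ and then \eqref{diag} with $M=\Lambda$ gives
\[\bar\omega^{(n)}_\Lambda\circ\Omega^n(\sigma)=\bar\omega^{(n)}_\Lambda\circ\Omega^n(\rho_\Lambda)\circ\Omega^n(u)=\rho_{\Omega^n(\Lambda)}\circ\omega^{(n)}_\Lambda\circ\Omega^n(u).\]
Composing with ${}_\sigma f_\sigma$ on the left and using the naturality of $\rho$ on $f\colon\Omega^n(\Lambda)\to{}_\sigma\Lambda_1$ then rewrites the left-hand side of the asserted identity as
\[{}_\sigma f_\sigma\circ\bar\omega^{(n)}_\Lambda\circ\Omega^n(\sigma)=\rho_{{}_\sigma\Lambda_1}\circ(1\otimes f\otimes1)\circ\omega^{(n)}_\Lambda\circ\Omega^n(u).\]

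Next I would unwind $\omega^{(n)}_\Lambda$. Since $\omega_N=\zeta_{{}_\sigma\Lambda_1}\otimes 1_{N\otimes{}_1\Lambda_\sigma}$ and $\Omega^{i-1}$ acts on bimodule maps as $1^{\otimes^{i-1}}\otimes(-)$, the factor $\Omega^{i-1}(\omega_{\Omega^{n-i}(\Lambda)})$ of $\omega^{(n)}_\Lambda=\prod_{j=1}^n\Omega^{j-1}(\omega_{\Omega^{n-j}(\Lambda)})$ equals $1^{\otimes^{i-1}}\otimes\zeta_{{}_\sigma\Lambda_1}\otimes1^{\otimes^{n-i}}\otimes1_{\Lambda\otimes{}_1\Lambda_\sigma}$. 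Hence $\omega^{(n)}_\Lambda$ is exactly $\big(\prod_{i=1}^n1^{\otimes^{i-1}}\otimes\zeta_{{}_\sigma\Lambda_1}\otimes1^{\otimes^{n-i}}\big)\otimes 1_{\Lambda\otimes{}_1\Lambda_\sigma}$, whose first factor is precisely the top horizontal map of that corollary taken with $M={}_\sigma\Lambda_1$. Likewise, after absorbing the two spectator copies of $\Lambda$, the map $\Omega^n(u)$ becomes the canonical isomorphism $c\colon\Omega^n(\Lambda)\to\Omega^n(\Lambda)\otimes{}_\sigma\Lambda_1\otimes{}_1\Lambda_\sigma$, $x\mapsto x\otimes1\otimes1$.

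Putting these identifications together and invoking the corollary — which gives $(1\otimes f)\circ\big(\prod_{i=1}^n1^{\otimes^{i-1}}\otimes\zeta_{{}_\sigma\Lambda_1}\otimes1^{\otimes^{n-i}}\big)=(-1)^{n^2}(f\otimes1)$ — the composite above collapses to $(-1)^{n^2}\,\rho_{{}_\sigma\Lambda_1}\circ(f\otimes1\otimes1)\circ c$. Since $c(x)=x\otimes1\otimes1$ and $\rho_{{}_\sigma\Lambda_1}(f(x)\otimes1\otimes1)=\sigma(f(x))$, this is $(-1)^{n^2}\sigma\circ f$, which is the right-hand side of the diagram. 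The step I expect to be the main obstacle is the penultimate one: matching $\omega^{(n)}_\Lambda$ and $\Omega^n(u)$ with the data of the corollary forces one to keep careful track of the identifications $\Omega^n(\Lambda)=\Omega(\Lambda)^{\otimes^n}$ (absorbing the trailing $\Lambda$-factor), of ${}_\sigma\Lambda_1\otimes{}_1\Lambda_\sigma\cong{}_\sigma\Lambda_\sigma$, and of the left/right twist conventions, so that no spurious sign or twist is introduced; each of these is a standard unit or associativity coherence isomorphism that commutes with all the $\zeta_{{}_\sigma\Lambda_1}$'s, but the resulting bookkeeping is what makes the argument tedious rather than immediate.
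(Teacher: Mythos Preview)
Your proof is correct and follows essentially the same route as the paper's. The paper assembles the corollaries and the compatibility square \eqref{diag} into one large commutative diagram and then verifies by an element chase that its bottom composite sends $a\in{}_\sigma\Lambda_1$ to $(-1)^{n^2}\sigma(a)$; you instead carry out the same reduction step by step---factoring $\sigma$ through $\rho_\Lambda$, passing from $\bar\omega^{(n)}$ to $\omega^{(n)}$ via \eqref{diag}, identifying $\omega^{(n)}_\Lambda$ with the product of the $\zeta_{{}_\sigma\Lambda_1}$'s, and invoking the corollary with $M={}_\sigma\Lambda_1$---and arrive at the same elementwise computation $\rho_{{}_\sigma\Lambda_1}(f(x)\otimes 1\otimes 1)=\sigma(f(x))$. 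The bookkeeping you flag as the main obstacle (absorbing the unit $\Lambda$-factors and carrying the spectator ${}_1\Lambda_\sigma$) is precisely what the paper hides in its diagram, so the two arguments match in substance.
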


\begin{proof}
	By the previous corollaries and various naturality properties, we have the following commutative diagram of isomorphisms,
	\begin{center}
		\includegraphics[scale=.7]{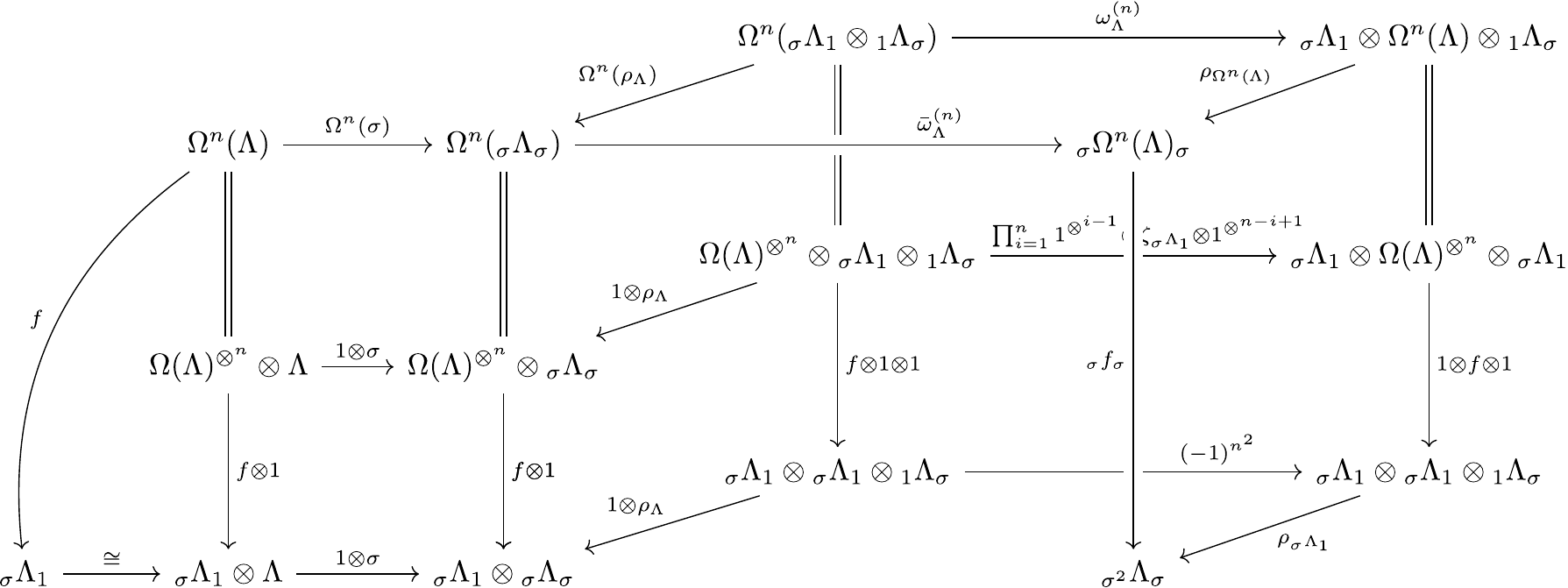}
	\end{center}
It therefore suffices to check that the bottom composite is $(-1)^{n^2}\sigma$. This is very easy to check. 
Actually, if we recall that $\rho$ is defined by multiplication, we readily see that, starting with $a\in{}_\sigma\Lambda_1$ in the bottom left corner,
\begin{center}
	\begin{tikzcd}[column sep = 10, row sep = 10]
	&&&1\otimes\sigma(a)\otimes1
	\ar[rr, mapsto] && (-1)^{n^2}1\otimes\sigma(a)\otimes1 \ar[ld, mapsto] \\
	a
	\ar[r, mapsto] &
	1\otimes a
	\ar[r, mapsto] &
	1\otimes\sigma(a)
	\ar[ru, mapsto] && 	(-1)^{n^2}\sigma(a)
	\end{tikzcd}
\end{center}
\end{proof}

\begin{corollary}
	For $n>0$, if $n$ is odd or $\characteristic k=2$, then \[i^*\colon \hh{n,1}{\Lambda(\sigma),\Lambda(\sigma)}\longrightarrow\hh{n,1}{\Lambda,\Lambda(\sigma)}\]
	induces a surjection on edge units.
\end{corollary}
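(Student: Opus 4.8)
### Proof proposal

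The plan is to combine the long exact sequence of Proposition \ref{graded_ungraded_long_exact_sequence} with a diagram chase, using Proposition \ref{complicada} to control the obstruction term. Fix $n>0$ with $n$ odd or $\characteristic k=2$, and let $x\in\hh{n,1}{\Lambda,\Lambda(\sigma)}$ be an edge unit. By Proposition \ref{edge_characterization}, the image of $x$ under the comparison map in $\htate{n,1}{\Lambda,\Lambda(\sigma)}=\htate{n}{\Lambda,{}_\sigma\Lambda_1}$ is a unit, hence by Proposition \ref{units} it is represented by an isomorphism $f\colon\Omega^n(\Lambda)\rightarrow {}_\sigma\Lambda_1$ in $\bimpst{\Lambda}$, and for $n$ large enough (or after the usual syzygy-shift reduction, using $\ext_{\Lambda}^{n,1}=\htate{n,1}$ for $n>0$) we may take $x$ itself to be this extension class.

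The key point is to identify, in the long exact sequence
\[
\cdots\To\hh{n,1}{\Lambda(\sigma),\Lambda(\sigma)}\stackrel{i^*}\To\hh{n,1}{\Lambda,\Lambda(\sigma)}\stackrel{\operatorname{id}-\sigma_*^{-1}\sigma^*}\To\hh{n,1}{\Lambda,\Lambda(\sigma)}\stackrel{\partial}\To\cdots,
\]
the map $\operatorname{id}-\sigma_*^{-1}\sigma^*$ applied to $x$. The element $x$ lifts to an edge unit in $\hh{n,1}{\Lambda(\sigma),\Lambda(\sigma)}$ along $i^*$ precisely when $(\operatorname{id}-\sigma_*^{-1}\sigma^*)(x)=0$, i.e.\ when $\sigma_*^{-1}\sigma^*(x)=x$; and any such lift will again be an edge unit, because $i^*$ preserves and reflects edge units by Proposition \ref{ss_comparison_edge} — actually one must check the lift lands in the right component, but surjectivity on edge units is exactly the statement that $\operatorname{id}-\sigma_*^{-1}\sigma^*$ kills every edge unit, which is what I will prove. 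So the whole statement reduces to the claim: \emph{if $x$ is an edge unit in $\hh{n,1}{\Lambda,\Lambda(\sigma)}$ and $n$ is odd (or $\characteristic k=2$), then $\sigma^*(x)=\sigma_*(x)$ in $\htate{n,1}{\Lambda,\Lambda(\sigma)}$.}

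Now translate both sides of $\sigma^*(x)=\sigma_*(x)$ into maps in $\bimpst{\Lambda}$ using the representing isomorphism $f\colon\Omega^n(\Lambda)\rightarrow{}_\sigma\Lambda_1$. Functoriality identifies $\sigma^*(x)$ with the class obtained by precomposing with $\Omega^n(\sigma)$ and transporting via the natural isomorphisms $\bar\omega^{(n)}$ of \eqref{diag}, while $\sigma_*(x)$ is the class of ${}_\sigma f_\sigma$ composed with the bimodule isomorphism $\sigma\colon{}_\sigma\Lambda_1\to{}_{\sigma^2}\Lambda_\sigma$ — here the signs from Remark \ref{signos} (the action of $\sigma$ on $\Lambda(\sigma)$ in degree $n$ is $(-1)^n\sigma$) enter. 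Proposition \ref{complicada} says exactly that the two composites differ by the scalar $(-1)^{n^2}$. Since $n^2\equiv n\pmod 2$, the extra signs contributed by $\sigma^*$ on the graded bimodule (degree $n$, hence $(-1)^n$) and those contributed by $\sigma_*$ (degree... the internal degree here is $1$, so $(-1)^1$, combined with the $(-1)^{n^2}$ from Proposition \ref{complicada}) are arranged so that, when $n$ is odd, $(-1)^{n^2}=-1$ cancels against the sign discrepancy between $\sigma^*$ and $\sigma_*$, giving $\sigma^*(x)=\sigma_*(x)$; when $\characteristic k=2$ all signs are trivial and the equality is immediate. Bookkeeping the signs is the main obstacle: one has to match the conventions for the extension of $\sigma$ to $\Lambda(\sigma)$ (Remark \ref{signos}), the sign in the shifted-module convention preceding Proposition \ref{graded_ungraded_long_exact_sequence}, and the $(-1)^{n^2}$ of Proposition \ref{complicada}, and verify they conspire to $0$ in the long exact sequence exactly under the stated parity hypothesis. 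Once that sign computation is done, the five-lemma-free conclusion is immediate: $(\operatorname{id}-\sigma_*^{-1}\sigma^*)(x)=0$, so $x=i^*(\tilde x)$ for some $\tilde x$, and $\tilde x$ is an edge unit by Proposition \ref{ss_comparison_edge}.
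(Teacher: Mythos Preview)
Your approach is essentially the paper's own: use the long exact sequence of Proposition~\ref{graded_ungraded_long_exact_sequence}, invoke Proposition~\ref{complicada} to show that any edge unit lies in the kernel of $\operatorname{id}-\sigma_*^{-1}\sigma^*$, and conclude via Proposition~\ref{ss_comparison_edge} that the resulting preimage is again an edge unit. Your sign bookkeeping is slightly muddled---the relevant sign from Remark~\ref{signos} comes from the \emph{internal} degree $1$ (the graded $\sigma$ acts on ${}_\sigma\Lambda_1$ as $-\sigma$), not from the Hochschild degree $n$, and this $-1$ cancels against the $(-1)^{n^2}=-1$ of Proposition~\ref{complicada} when $n$ is odd---but you land on the correct cancellation and the argument goes through.
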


\begin{proof}
	This morphism induces a map on edge units by Proposition \ref{ss_comparison_edge}. Consider the long exact sequence in Proposition \ref{graded_ungraded_long_exact_sequence} for $\C{T}_\Sigma=\Lambda(\sigma)$ and $*=1$. Proposition \ref{complicada} shows that any edge unit in the target is in the kernel of $\id{}-\sigma^{-1}_*\sigma^*$. Here we use that $n^2$ is odd and that the graded $\sigma$ is defined on the degree $1$ part of $\Lambda(\sigma)$, which is ${}_\sigma\Lambda_1$, as the ungraded $-\sigma$, see Remark \ref{signos}. Therefore, any edge unit in the target has a preimage, which must also be an edge unit by Proposition \ref{ss_comparison_edge}.
\end{proof}

Since $\Lambda(\sigma)$ is the same as $\Lambda(\sigma^{-1})$ reversing the degrees, we also deduce the following result.

\begin{corollary}\label{preimage}
	For $n>0$, if $n$ is odd or $\characteristic k=2$, then \[i^*\colon \hh{n,-1}{\Lambda(\sigma),\Lambda(\sigma)}\longrightarrow\hh{n,-1}{\Lambda,\Lambda(\sigma)}\]
	induces a surjection on edge units.
\end{corollary}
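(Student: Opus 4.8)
The plan is to deduce Corollary \ref{preimage} from the previous corollary by a symmetry argument, exactly as the paragraph preceding the statement suggests. The key observation is that the graded algebra $\Lambda(\sigma^{-1})$ is obtained from $\Lambda(\sigma)$ by reversing the sign of the internal grading: in degree $n$, the part $\Lambda(\sigma)^n \cong {}_{\sigma^n}\Lambda_1$ becomes $\Lambda(\sigma^{-1})^{-n} \cong {}_{\sigma^{-n}}\Lambda_1$, and the composition laws match up once one tracks the generators $\imath^{\pm 1}$ as in Remark \ref{signos}. First I would make precise the isomorphism of \emph{bigraded} objects $\hh{n,q}{\Lambda(\sigma),\Lambda(\sigma)} \cong \hh{n,-q}{\Lambda(\sigma^{-1}),\Lambda(\sigma^{-1})}$ induced by this grading reversal, checking that it is compatible with the spectral sequences \eqref{spectral_graded} and \eqref{spectral_ungraded}, with the comparison maps to Hochschild--Tate cohomology, and with the inclusions $i\colon\Lambda\subset\Lambda(\sigma)$ and $i\colon\Lambda\subset\Lambda(\sigma^{-1})$. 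Consequently it carries edge units to edge units and intertwines the two maps $i^*$.

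With that dictionary in hand, the corollary is immediate: an edge unit in $\hh{n,-1}{\Lambda,\Lambda(\sigma)}$ corresponds to an edge unit in $\hh{n,1}{\Lambda,\Lambda(\sigma^{-1})}$, which by the previous corollary (applied to the automorphism $\sigma^{-1}$, using that $n$ odd or $\characteristic k = 2$ is a hypothesis symmetric in $\sigma$ and $\sigma^{-1}$) lifts along $i^*\colon\hh{n,1}{\Lambda(\sigma^{-1}),\Lambda(\sigma^{-1})}\to\hh{n,1}{\Lambda,\Lambda(\sigma^{-1})}$ to an edge unit; transporting this preimage back along the grading-reversal isomorphism produces the desired edge-unit lift in $\hh{n,-1}{\Lambda(\sigma),\Lambda(\sigma)}$.

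The only real content is bookkeeping: one must verify that grading reversal is genuinely an equivalence of the relevant structured categories (graded $\Lambda(\sigma)$-modules versus graded $\Lambda(\sigma^{-1})$-modules, with their Frobenius exact structures), so that it induces the claimed isomorphisms on $\ext$, on Hochschild cohomology, and on all the edge and comparison morphisms appearing in the definition of an edge unit. This is why the paper phrases it as ``since $\Lambda(\sigma)$ is the same as $\Lambda(\sigma^{-1})$ reversing the degrees'' and ``we also deduce'' — the argument is purely a relabelling. I expect the main obstacle, such as it is, to be making sure the Koszul signs introduced by grading reversal do not disturb the hypothesis: here the saving grace is precisely that we are in internal degrees $\pm 1$ and the extra factor that appears is $(-1)^{n^2}$, which under the standing hypothesis ($n$ odd or $\characteristic k = 2$) is harmless, matching the role it already plays in Proposition \ref{complicada} and the preceding corollary.
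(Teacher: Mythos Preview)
Your proposal is correct and follows exactly the paper's approach: the paper's entire proof is the single sentence ``Since $\Lambda(\sigma)$ is the same as $\Lambda(\sigma^{-1})$ reversing the degrees, we also deduce the following result,'' and you have unpacked precisely what that relabelling entails. Your elaboration of the bookkeeping (compatibility with the spectral sequences, the edge morphisms, the comparison maps, and the inclusions $i$) is appropriate and matches what is implicit in the paper's one-line deduction.
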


\begin{proposition}\label{kick}
	Given an edge unit $u\in\hh{3,-1}{\Lambda(\sigma),\Lambda(\sigma)}$, there exists $y\in \hh{2,-1}{\Lambda(\sigma),\Lambda(\sigma)}$ such that $\gsquare(u+\{\delta\}\cdot y)=0$.
\end{proposition}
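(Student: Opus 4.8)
The plan is to produce the correction term $y$ by analyzing how the Gerstenhaber square of $u$ interacts with the multiplication-by-$\{\delta\}$ operator, and then closing up using the non-singularity of edge units established in Lemma \ref{non_singularity}. First I would compute, using the Gerstenhaber algebra relations, the square of $u+\{\delta\}\cdot y$ in terms of $\gsquare(u)$, $\{\delta\}$, and $y$. Since $\abs{u}=3-1=2$ is even (or $\characteristic k=2$), the relation $\gsquare(x+z)=\gsquare(x)+\gsquare(z)+[x,z]$ applies with $x=u$ and $z=\{\delta\}\cdot y$, giving
\[
\gsquare(u+\{\delta\}\cdot y)=\gsquare(u)+\gsquare(\{\delta\}\cdot y)+[u,\{\delta\}\cdot y].
\]
Now $\gsquare(u)\in\hh{5,-2}{\Lambda(\sigma),\Lambda(\sigma)}$ and I would use Proposition \ref{nulo} or Remark \ref{nulo2}: since $u$ has horizontal degree $3$ (odd) and vertical degree $-1$, the hypotheses of Proposition \ref{nulo} are met with $y\rightsquigarrow u$, so $u\cdot-$ is null-homotopic for the differential $[u,-]$ with null-homotopy $\{\delta\}\cdot-$, provided $\gsquare(u)=0$. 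But $\gsquare(u)$ need not vanish; instead the point is to use that $u$ acts invertibly (Lemma \ref{non_singularity}) to write $\gsquare(u)$ in the desired form.

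The key step is the following. By Lemma \ref{non_singularity}, multiplication by $u$ is an isomorphism $\hh{2,q}{\Lambda(\sigma),\Lambda(\sigma)}\xrightarrow{\sim}\hh{5,q-1}{\Lambda(\sigma),\Lambda(\sigma)}$ for all $q$; in particular $\gsquare(u)=u\cdot w$ for a unique $w\in\hh{2,-1}{\Lambda(\sigma),\Lambda(\sigma)}$. I would then try $y=-w$ (up to a sign and possibly a further correction) and expand. Using the Leibniz rule $[u,y\cdot z]=[u,y]\cdot z+(-1)^{(\abs{u}-1)\abs{y}}y\cdot[u,z]$ together with Corollary \ref{euler_bracket}, which gives $[\{\delta\},-]$ as multiplication by the vertical degree, one controls $[u,\{\delta\}\cdot y]$; and $\gsquare(\{\delta\}\cdot y)$ is handled by Proposition \ref{euler_sub_lie} (when $\characteristic k\neq 2$, where $\abs{\{\delta\}\cdot y}=1+(2-1)=$ odd forces $\gsquare(\{\delta\}\cdot y)=0$) or by Proposition \ref{euler_gsquare_2} (when $\characteristic k=2$). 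The cross term $[u,\{\delta\}\cdot y]$ should, after using $[\{\delta\},u]=-[u,\{\delta\}]$ and the Euler bracket formula, collapse to $\{\delta\}\cdot[u,y]$ plus a multiple of $u\cdot y$; combining everything one gets an equation of the form
\[
\gsquare(u+\{\delta\}\cdot y)=u\cdot\bigl(w+(\text{something in }y)\bigr)+\{\delta\}\cdot(\text{something}),
\]
and I would solve for $y$ so that both bracketed expressions vanish, again invoking Lemma \ref{non_singularity} to invert the relevant multiplication-by-$u$ map in the appropriate bidegree (note the $\{\delta\}$-term lives in $\hh{1,-1}$-shifted degrees where $u\cdot-$ is still surjective, which is exactly the range covered by the lemma). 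A possible subtlety is that one must iterate: solving the first equation may reintroduce a nonzero second term, so one may need to set up a small fixed-point / successive-approximation argument within the finite-dimensional graded pieces, or observe directly that the relevant operator (identity plus a nilpotent-by-degree-reasons correction) is invertible.

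The main obstacle I expect is bookkeeping the signs and degrees precisely enough to see that the two "error terms" produced by the expansion genuinely lie in bidegrees where $u\cdot-$ is an isomorphism or epimorphism, and that the system of equations for $y$ is solvable — this is where Lemma \ref{non_singularity} must be applied with some care at the boundary case $p=1$ (epimorphism only). A secondary obstacle is the case split between $\characteristic k=2$ and $\characteristic k\neq 2$: in characteristic $2$ the formula in Proposition \ref{euler_gsquare_2} contributes an extra $(q+1)\{\delta\}\cdot y^2$ term with $q=-1$, which happily vanishes, so characteristic $2$ is actually the easier case, while in odd characteristic one relies on $\gsquare(\{\delta\}\cdot y)=0$ from Proposition \ref{euler_sub_lie}. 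I would organize the write-up by first doing the formal expansion in the Gerstenhaber algebra, then isolating the equation $\gsquare(u)=u\cdot(-y)+\{\delta\}\cdot(\text{lower})$ and inverting, and finally verifying the residual term vanishes or can be absorbed by a second adjustment of $y$.
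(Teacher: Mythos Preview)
Your approach is essentially the paper's: expand $\gsquare(u+\{\delta\}\cdot y)$ using the Gerstenhaber relations, apply Propositions \ref{euler_sub_lie}, \ref{euler_gsquare_2}, and \ref{nulo} to reduce it to $\gsquare(u)+u\cdot y-\{\delta\}\cdot[u,y]$, and then use Lemma \ref{non_singularity} to choose the unique $y$ with $\gsquare(u)+u\cdot y=0$. The expansion and the choice of $y$ are exactly right.

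The gap is in how you handle the residual term $\{\delta\}\cdot[u,y]$. Your proposed fix --- iterate, run a successive-approximation, or absorb it by a further adjustment of $y$ --- will not work: any change in $y$ perturbs \emph{both} the $u\cdot y$ term and the $\{\delta\}\cdot[u,y]$ term simultaneously, and there is no degree-shift mechanism making the correction nilpotent (the two error terms live in the \emph{same} bidegree $(5,-2)$). The paper instead shows that for the specific $y$ satisfying $u\cdot y=-\gsquare(u)$ one has $[u,y]=0$ outright, via a short bootstrapping chain: since $u\cdot-$ is an isomorphism in the relevant degrees (Lemma \ref{non_singularity}), it suffices to show $u\cdot[u,y]=0$; from $0=[u,u\cdot y]=[u,u]\cdot y+u\cdot[u,y]$ this reduces to $[u,u]\cdot y\cdot u=0$; and finally $u\cdot[u,u]\cdot y=[u,u]\cdot(u\cdot y)=-2\gsquare(u)^2=0$ because $\gsquare(u)$ has odd total degree $3$, so its square is $2$-torsion by graded commutativity. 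This last observation --- that $\gsquare(u)^2=0$ in all characteristics --- is the missing idea.
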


\begin{proof}
	For any $y$ we have 
	\begin{align*}
	\gsquare(u+\{\delta\}\cdot y)
	&=\gsquare(u)+\gsquare(\{\delta\}\cdot y)+[u,\{\delta\}\cdot y]\\
	&=\gsquare(u)+u\cdot y-\{\delta\}\cdot [u,y]
	\end{align*}
	Here we use the Gerstenhaber algebra laws and Propositions \ref{euler_sub_lie}, \ref{euler_gsquare_2} and \ref{nulo}. Since $u\cdot-\colon \hh{2,-1}{\Lambda(\sigma),\Lambda(\sigma)}\rightarrow\hh{5,-2}{\Lambda(\sigma),\Lambda(\sigma)}$ is an isomorphism by Lemma \ref{non_singularity}, there exists a unique $y$ such that $\gsquare(u)+u\cdot y=0$. Let us fix this $y$. If we prove that $[u,y]=0$, then we will be done. Again, $u\cdot-\colon \hh{4,-2}{\Lambda(\sigma),\Lambda(\sigma)}\rightarrow\hh{7,-3}{\Lambda(\sigma),\Lambda(\sigma)}$ is an isomorphism, so it is enough to check that $u\cdot [u,y]=0$. We have that $[\gsquare(u),u]=[u,[u,u]]=0$ and $u\cdot y=-\gsquare(u)$, therefore $0=[u,u\cdot y]
	=[u,u]\cdot y+u\cdot[u,y]$. Once again, since $u\cdot-\colon \hh{7,-3}{\Lambda(\sigma),\Lambda(\sigma)}\rightarrow\hh{10,-4}{\Lambda(\sigma),\Lambda(\sigma)}$ is an isomorphism, it suffices to prove that $u\cdot [u,u]\cdot y=0$. This follows from
	\begin{align*}
	u\cdot [u,u]\cdot y&=[u,u]\cdot (u\cdot y)\\
	&=2\cdot\gsquare(u)\cdot (-\gsquare(u))\\
	&=-2\cdot\gsquare(u)^2\\
	&=0.
	\end{align*}
	Here we use that $\abs{\gsquare(u)}=5-2=3$ is odd, so its square is $2$-torsion by graded commutativity.
\end{proof}

\begin{proposition}\label{bijection}
	The map
	\[i^*\colon \hh{3,-1}{\Lambda(\sigma),\Lambda(\sigma)}\longrightarrow\hh{3,-1}{\Lambda,\Lambda(\sigma)}\] induces a bijection between the set of edge units $u$ in the source satisfying $\gsquare(u)=0$ and the set of edge units in the target with no extra condition. 
\end{proposition}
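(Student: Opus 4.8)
The plan is to check well-definedness, then surjectivity, then injectivity.

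For well-definedness note that $i^*$ sends edge units to edge units by Proposition \ref{ss_comparison_edge}, so its restriction to the $u$ with $\gsquare(u)=0$ indeed lands among the edge units of $\hh{3,-1}{\Lambda,\Lambda(\sigma)}$. For surjectivity, let $z\in\hh{3,-1}{\Lambda,\Lambda(\sigma)}$ be an edge unit. Corollary \ref{preimage} (applied with the odd integer $n=3$) gives an edge unit $u_0\in\hh{3,-1}{\Lambda(\sigma),\Lambda(\sigma)}$ with $i^*(u_0)=z$, and Proposition \ref{kick} gives $y\in\hh{2,-1}{\Lambda(\sigma),\Lambda(\sigma)}$ with $\gsquare(u_0+\{\delta\}\cdot y)=0$. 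Put $u=u_0+\{\delta\}\cdot y$. Because $i^*$ is a bigraded algebra morphism (Proposition \ref{graded_ungraded_long_exact_sequence}) and $i^*(\{\delta\})=0$ — the Euler derivation of $\Lambda(\sigma)$ restricts to $0$ on the ungraded degree-$0$ part $\Lambda$ — we have $i^*(\{\delta\}\cdot y)=0$, hence $i^*(u)=z$; and $u$ is an edge unit since $z$ is and $i^*$ reflects edge units (Proposition \ref{ss_comparison_edge}). Thus $u$ is a preimage of $z$ with $\gsquare(u)=0$.

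For injectivity, suppose $u_1,u_2$ are edge units with $\gsquare(u_1)=\gsquare(u_2)=0$ and $i^*(u_1)=i^*(u_2)=:z$, and set $v=u_1-u_2\in\ker i^*$; we must show $v=0$. The crucial point is to write $v$ as a multiple of the Euler class: using the long exact sequence of Proposition \ref{graded_ungraded_long_exact_sequence}, the identity $\partial i^*=\{\delta\}\cdot(-)$, and the fact that $\partial$ annihilates exactly the subspace on which $\id{}-\sigma_*^{-1}\sigma^*$ is invertible, one gets that $\ker i^*$ in bidegree $(3,-1)$ equals $\{\delta\}\cdot\hh{2,-1}{\Lambda(\sigma),\Lambda(\sigma)}$; so $v=\{\delta\}\cdot y$ for some $y$. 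Expanding $\gsquare(u_2+\{\delta\}\cdot y)$ just as at the beginning of the proof of Proposition \ref{kick}, the hypothesis $\gsquare(u_1)=0$ turns into $u_2\cdot y=\{\delta\}\cdot[u_2,y]$. Applying $i^*$ and using $i^*(\{\delta\})=0$ yields $z\cdot i^*(y)=0$, and since $z$ is an edge unit, left multiplication by $z$ is injective on $\hh{2,-1}{\Lambda,\Lambda(\sigma)}$ (it is even an isomorphism, by the argument in the proof of Lemma \ref{non_singularity}); hence $i^*(y)=0$, i.e.\ $y\in\ker i^*$. Repeating the step for $y$ gives $y=\{\delta\}\cdot y'$, so $v=\{\delta\}^2\cdot y'=0$ by Proposition \ref{vaa0}.

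I expect the main obstacle to be the Euler-class description of $\ker i^*$ in bidegrees $(3,-1)$ and $(2,-1)$, i.e.\ verifying that $0$ is a semisimple eigenvalue of $\id{}-\sigma_*^{-1}\sigma^*$ on the relevant finite-dimensional Hochschild groups, so that $\ker i^*$ is exactly $\{\delta\}\cdot\hh{\star-1,-1}{\Lambda(\sigma),\Lambda(\sigma)}$ there. An alternative, avoiding this, uses Remark \ref{nulo2}: since $\gsquare(u_2)=0$, the operator $[u_2,-]$ is a differential on $\hh{\star,\ast}{\Lambda(\sigma),\Lambda(\sigma)}$ for which $u_2\cdot(-)$ is a chain map null-homotopic through $\{\delta\}\cdot(-)$ (Proposition \ref{nulo}); since, by Lemma \ref{non_singularity}, $u_2\cdot(-)$ is an isomorphism in horizontal degrees $\geq 2$ and an epimorphism in degree $1$, the homology of $(\hh{\star,\ast}{\Lambda(\sigma),\Lambda(\sigma)},[u_2,-])$ must vanish in horizontal degrees $\geq 3$, and this pins $v$ down. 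In that route one also has to handle the characteristic-$2$ case separately — there $[v,v]=0$ is automatic, so $\gsquare(v)=0$ has to be extracted from Proposition \ref{kernel} together with the vanishing just described — and in either route the Koszul-sign bookkeeping is the bulk of the remaining work.
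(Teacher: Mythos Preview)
Your surjectivity argument is the paper's argument. The gap is in injectivity: you try to write $v=u_1-u_2$ as $\{\delta\}\cdot y$, and you correctly flag that this requires $\ker i^*$ in bidegree $(3,-1)$ to equal $\{\delta\}\cdot\hh{2,-1}{\Lambda(\sigma),\Lambda(\sigma)}$. Unwinding the long exact sequence, this is the statement that $\hh{2,-1}{\Lambda,\Lambda(\sigma)}=\ker(\id{}-\sigma_*^{-1}\sigma^*)+\im(\id{}-\sigma_*^{-1}\sigma^*)$, i.e.\ that $0$ is a semisimple eigenvalue of $\id{}-\sigma_*^{-1}\sigma^*$. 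There is no reason this should hold in general, and the paper neither proves nor needs it. Your alternative route via the $[u_2,-]$-homology is too vague: from $\gsquare(u_1)=0$ you only get $[u_2,v]=-\gsquare(v)$, not $[u_2,v]=0$, so $v$ is not obviously a cycle, and ``this pins $v$ down'' is not an argument.

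The paper's injectivity proof is shorter and avoids both difficulties. The point you are missing is that Proposition~\ref{graded_ungraded_long_exact_sequence} says $\ker i^*=\im\partial$ is a \emph{square-zero ideal}. Since $\{\delta\}\in\ker i^*$ and $v\in\ker i^*$, you get $\{\delta\}\cdot v=0$ immediately; and since $\gsquare(v)\in\ker i^*$ by Proposition~\ref{kernel}, also $\{\delta\}\cdot\gsquare(v)=0$. Now Proposition~\ref{nulo} gives
\[
u_2\cdot v=[u_2,\{\delta\}\cdot v]+\{\delta\}\cdot[u_2,v]=\{\delta\}\cdot[u_2,v]=-\{\delta\}\cdot\gsquare(v)=0,
\]
and Lemma~\ref{non_singularity} (multiplication by the edge unit $u_2$ is injective on $\hh{3,-1}{\Lambda(\sigma),\Lambda(\sigma)}$) forces $v=0$. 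No description of $\ker i^*$ beyond ``square-zero ideal'' is needed.
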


\begin{proof}
	The map $i^*$ restricts to edge units by Proposition \ref{ss_comparison_edge}. 
	Let us first check surjectivity. Any edge unit in $\hh{3,-1}{\Lambda,\Lambda(\sigma)}$ has a preimage $u$ by Corollary \ref{preimage}. The element $u+\{\delta\}\cdot y$ in Proposition \ref{kick} is another preimage since $\Lambda$ is ungraded so  $i^*(\{\delta\})=0$ and therefore $i^*(u+\{\delta\}\cdot y)=i^*(u)$. Moreover, $u+\{\delta\}\cdot y$ is also an edge unit by Proposition \ref{ss_comparison_edge}.
	
	We now check injectivity. Let $u,u'\in \hh{3,-1}{\Lambda(\sigma),\Lambda(\sigma)}$ be edge units with $\gsquare(u)=\gsquare(u')=0$ and $i^*(u)=i^*(u')$. The element $z=u'-u$ is in the kernel of $i^*$, hence $\gsquare(z)$ too by Proposition \ref{kernel}. Using the Gerstenhaber algebra laws and Proposition \ref{nulo} we obtain
	\begin{align*}
	0&=\gsquare(u')&						  uz&=[u,\{\delta\}z]+\{\delta\}[u,z]\\
	&=\gsquare(u+z)&							&=\{\delta\}[u,z]\\
	&=\gsquare(u)+[u,z]+\gsquare(z)&			&=-\{\delta\}Sq(z)\\
	&=[u,z]+\gsquare(z),&						&=0.
	\end{align*}
	On the right, we use twice that the kernel of $i^*$ is a square zero ideal, see Proposition \ref{graded_ungraded_long_exact_sequence}, and $i^*(\{\delta\})=0$. By Lemma \ref{non_singularity}, $uz=0$ implies that $z=0$, so $u=u'$.
\end{proof}

\section{Enhancements}\label{enhancements}

Recall that a DG-functor between DG-categories $\C A\rightarrow\C B$ is a \emph{Morita equivalence} if it induces an equivalence between their derived categories $D(\C A)\rightarrow D(\C B)$, or equivalently between their full subcategories of compact objects.

\begin{definition}
	Let $\C T$ be a finite category and $\Sigma\colon\C T\rightarrow\C T$ an automorphism. The set of \emph{enhanced triangulated categories} with underlying suspended category $(\C T,\Sigma)$, denoted by 
	\[\etc{\C T,\Sigma},\]
	is the set of Morita equivalence classes of DG-categories $\C A$ such that the derived category of compact objects $D^c(\C A)$ is equivalent to $(\C T,\Sigma)$ as a suspended category. A suspended category is a category equipped with a self-equivalence, and two suspended categories $(\C T,\Sigma)$, $(\C T',\Sigma')$ are equivalent if there is an equivalence of categories $F\colon\C T\rightarrow\C T'$ such that $F\Sigma\cong\Sigma'F$. This is equivalent to the existence of a graded category equivalence $\C T_\Sigma\simeq\C T'_{\Sigma'}$.
\end{definition}

The aim of this paper is to show that, under mild conditions, $\etc{\C T,\Sigma}$ is either empty or a singleton. In this section, we give alternative descriptions of this set which will help in proving that.

Let $\morita$ be the category of DG-categories endowed with Tabuada's Morita model structure \cite{tabuada_invariants_2005, tabuada_addendum_2006, tabuada_corrections_2007}. Let $\morita(\C T,\Sigma)$ be the full subcategory of DG-categories $\C A$ with $D^c(\C A)$ equivalent to $(\C T,\Sigma)$. The set $\etc{\C T,\Sigma}$ has been implicitly defined as the set of connected components of the classification space $\abs{\morita(\C T,\Sigma)}$ in the sense of \cite{dwyer_classification_1984}. This space is the nerve of the category of Morita weak equivalences between DG-categories $\C A$ with $D^c(\C A)$ equivalent to $(\C T,\Sigma)$. This space contains much more information than its mere set of connected components. In a sense, it knows about the homotopy symmetries of each DG-category in the Morita model structure \cite[Corollary A.0.4]{toen_homotopical_2008}. 

Let $\equivalences$ be the category of DG-categories endowed with the model structure in \cite{tabuada_structure_2005}, whose weak equivalences are quasi-equivalences. Let us denote by $\equivalences(\C T,\Sigma)$ be the full subcategory of pre-triangulated DG-categories $\C A$, in the sense of \cite{bondal_enhanced_1991}, such that $H^0(\C A)$ is equivalent to $(\C T,\Sigma)$ as a suspended category. A DG-category $\C A$ is \emph{pre-triangulated} if the canonical inclusion functor $H^0(\C A)\hookrightarrow D(\C A)$ identifies $H^0(\C A)$ with a triangulated subcategory of the target. A DG-category $\C A$ is fibrant in $\morita$ precisely if it is pre-triangulated and $H^0(\C A)$ is idempotent complete. This is equivalent to saying that the previous canonical inclusion induces an equivalence $H^0(\C A)\simeq D^c(\C A)$ with the full subcategory of compact objects in $D(\C A)$. In particular, objects in $\equivalences(\C T,\Sigma)$ are Morita fibrant. Since $\morita$ is a left Bousfield localization of $\equivalences$, Morita equivalences between Morita fibrant DG-categories are the same as quasi-equivalences.

\begin{proposition}\label{intermediate}
	There is a homotopy equivalence \[\abs{\equivalences(\C T,\Sigma)}\simeq\abs{\morita(\C T,\Sigma)}.\]
\end{proposition}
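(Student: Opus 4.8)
The plan is to produce the desired homotopy equivalence by exhibiting a chain of Quillen-type comparisons between the relevant classification spaces, all of which restrict compatibly to the full subcategories parametrized by $(\C T,\Sigma)$. First I would recall that $\morita$ is a left Bousfield localization of $\equivalences$, so the two model categories share the same underlying category of DG-categories and the same cofibrations, and the $\morita$-fibrant objects are exactly the pre-triangulated DG-categories $\C A$ with $H^0(\C A)$ idempotent complete. A general principle (see \cite{dwyer_classification_1984,toen_homotopical_2008}) is that for any model category $\C M$, the classification space of a full subcategory $\C S$ closed under weak equivalences depends only on the homotopy type of that subcategory, and in particular a fibrant replacement functor induces a homotopy equivalence between $|\C S|$ and $|\C S^{\mathrm{fib}}|$, where $\C S^{\mathrm{fib}}$ is the full subcategory of fibrant objects in $\C S$ together with the weak equivalences between them.

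The key steps, in order, are as follows. First, inside $\equivalences$ I would take a fibrant replacement, i.e.\ a pre-triangulation functor $\C A\mapsto\C A^{\mathrm{pretr}}$ (the construction of Bondal--Kapranov \cite{bondal_enhanced_1991}), together with a natural quasi-equivalence $\C A\to\C A^{\mathrm{pretr}}$; this does not change $H^0$ up to equivalence beyond passing to its pretriangulated hull, but since we only look at DG-categories whose $H^0$ is already equivalent to $(\C T,\Sigma)$ — which is a bona fide triangulated category — the pretriangulation does not enlarge it, so this functor preserves the subcategory $\equivalences(\C T,\Sigma)$ up to objectwise quasi-equivalence and yields a homotopy equivalence $|\equivalences(\C T,\Sigma)|\simeq|\equivalences(\C T,\Sigma)^{\mathrm{pretr}}|$. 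Second, $\morita$-fibrant means moreover that $H^0$ is idempotent complete, which holds automatically here because $\C T$ is assumed idempotent complete (it is finite); hence every object of $\equivalences(\C T,\Sigma)$, after pretriangulation, is already $\morita$-fibrant. Third, since $\morita$ is a left Bousfield localization of $\equivalences$, a map between $\morita$-fibrant objects is a $\morita$-weak equivalence if and only if it is a $\equivalences$-weak equivalence, i.e.\ a quasi-equivalence; therefore the categories of weak equivalences agree on this common full subcategory of fibrant objects, giving an identification of classification spaces $|\equivalences(\C T,\Sigma)^{\mathrm{pretr}}|\simeq|\morita(\C T,\Sigma)^{\mathrm{fib}}|$. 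Fourth, applying the fibrant-replacement principle for $\morita$ itself gives $|\morita(\C T,\Sigma)^{\mathrm{fib}}|\simeq|\morita(\C T,\Sigma)|$. Composing the three homotopy equivalences yields the statement.

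The main obstacle I anticipate is bookkeeping the subcategory-preservation carefully: one must verify that the relevant fibrant-replacement functors (pretriangulation in $\equivalences$, a $\morita$-fibrant replacement in $\morita$) send a DG-category $\C A$ with $D^c(\C A)$ (resp.\ $H^0(\C A)$) suspended-equivalent to $(\C T,\Sigma)$ to another such DG-category, and that the defining natural weak equivalence is objectwise in the relevant subcategory so that Dwyer--Kan's invariance of classification spaces under such ``homotopical embeddings'' applies. For the pretriangulation step this is exactly the content of the observation that $H^0$ of a pretriangulated DG-category $\C A$ with $H^0(\C A)\hookrightarrow D(\C A)$ a triangulated subcategory is unchanged when the object is already pretriangulated, and that $D^c(\C A^{\mathrm{pretr}})\simeq D^c(\C A)$ together with the invariance of both $D^c$ and the induced suspension under quasi-equivalence. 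For the $\morita$-fibrant replacement step, one uses that such a replacement is a Morita equivalence and therefore does not alter $D^c$ nor its suspension. Once these preservation facts are in place, the assembly of the chain is formal, relying only on \cite{dwyer_classification_1984} for the invariance of classification spaces and on the left Bousfield localization relationship between $\morita$ and $\equivalences$.
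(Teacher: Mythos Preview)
Your approach is essentially the same as the paper's, and the core observations are correct: objects of $\equivalences(\C T,\Sigma)$ are Morita-fibrant (pre-triangulated with idempotent complete $H^0$, since $\C T$ is finite), and on Morita-fibrant objects the two notions of weak equivalence agree because $\morita$ is a left Bousfield localization of $\equivalences$. The paper packages this more directly: it writes down the inclusion $j\colon\equivalences(\C T,\Sigma)\hookrightarrow\morita(\C T,\Sigma)$ and a Morita fibrant replacement $R$ going the other way, observes both preserve weak equivalences, and notes that $jR$ and $Rj$ come with natural weak equivalences to the identity---a one-step argument rather than your chain of three.

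Your first step, however, is both unnecessary and slightly confused. By definition, $\equivalences(\C T,\Sigma)$ already consists of \emph{pre-triangulated} DG-categories, so applying $(-)^{\mathrm{pretr}}$ is vacuous there. More importantly, $\C A\to\C A^{\mathrm{pretr}}$ is in general a Morita equivalence, not a quasi-equivalence, and it is not a fibrant replacement in $\equivalences$ (all objects are already fibrant in Tabuada's quasi-equivalence model structure). Your parenthetical justification---that $H^0(\C A)$ being triangulated forces the pretriangulated hull to add nothing---is really just the observation that $\C A$ is already pre-triangulated, which you could have read off the definition. Drop that step and your argument collapses to the paper's.
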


\begin{proof}
	We have an inclusion functor $j\colon \equivalences(\C T,\Sigma)\subset\morita(\C T,\Sigma)$. A fibrant replacement functor $R$ on $\morita$ (co)restricts to $R\colon \morita(\C T,\Sigma)\rightarrow\equivalences(\C T,\Sigma)$. Both functors $j$ and $R$ preserve weak equivalences. Moreover, both composites $jR$ and $Rj$ are equipped with natural weak equivalences to the corresponding identity functor. Hence, we are done.
\end{proof}

Let $\Lambda$ be the endomorphism algebra of a basic additive generator of $\C T$, that we assume to be Frobenius as a necessary condition for $\etc{\C T,\Sigma}$ to be non-empty,  $\sigma\colon\Lambda\rightarrow\Lambda$ an automorphism induced by $\Sigma$, and  $\equivalences(\Lambda(\sigma))$ the full subcategory of $\equivalences$ spanned by the DG-categories $\C B$ satisfying the following properties:
\begin{enumerate}
	\item The category $H^0(\C B)$ has a unique object up to isomorphism. 
	\item The graded endomorphism algebra of some (and hence any) object in $H^*(\C B)$ is isomorphic to $\Lambda(\sigma)$. 
	\item The functor $D^c(\C B)\rightarrow\modules{\Lambda}$ defined by evaluating at an object  induces an equivalence onto the full subcategory of finitely presented projective $\Lambda$-modules. 
\end{enumerate}
Clearly, (3) is equivalent to the following condition:
\begin{enumerate}
	\item[$(3')$] The functor $H^0(\C B)\hookrightarrow D^c(\C B)$ induces an equivalence from the completion of the source by direct sums and idempotents.
\end{enumerate}
This completion, which will also be used below, consists of first formally adding all finite direct sums of objects and then all retracts of idempotents. 
By the Morita invariance of the derived category, $(3')$ is equivalent to:
\begin{enumerate}
	\item[$(3'')$] The completion of $\C B$ by direct sums and idempotents in $H^0(\C B)$ is pre-triangulated.
\end{enumerate}
Moreover, after (2), (3) is also equivalent to 
\begin{enumerate}
	\item[$(3''')$] The functor $D^c(\C B)_\Sigma\rightarrow\modules{\Lambda(\sigma)}$ defined by evaluating at an object induces an equivalence onto the full graded subcategory of finitely presented projective $\Lambda(\sigma)$-modules. 
\end{enumerate}
This is because the source and target of the functor in $(3''')$ are weakly stable, and its degree $0$ part is the functor in (3). 

\begin{definition}\label{ump}
	A \emph{minimal $A$-infinity algebra structure} $(A,m_3,\dots,m_n,\dots)$ on a graded algebra $A$ consists of Hochschild cochains $m_n\in\hc{n,2-n}{A,A}$, $n\geq 3$, satisfying some well-known equations \cite{keller_introduction_2001}. These equations imply that $m_3$ is a cocycle, whose cohomology class is called \emph{universal Massey product},
	\[\{m_3\}\in \hh{3,-1}{A,A}.\]
\end{definition}

\begin{remark}\label{ump2}
	Actually, the universal Massey product is well defined for any minimal $A_3$-algebra structure on $A$ which admits an $A_4$-extension. Using Kadeishvili's theorem \cite{kadeishvili_theory_1980, lefevre-hasegawa_sur_2003}, we can also define the universal Massey product of a DG-algebra as the universal Massey product of any minimal model. The universal Massey product has been previously studied in e.g.~\cite{kadeishvili2,benson}.
\end{remark}

By \cite[Corollary 6.2]{muro_first_2019} and the Morita invariance of Hochschild cohomology (i.e.~it does not change if we complete by direct sums and idempotents), we can also replace $(3)$ in the previous list with:
\begin{enumerate}
	\item[$(3'''')$] The universal Massey product of $\C B$ is an edge unit.
\end{enumerate}

\begin{proposition}
	There is a homotopy equivalence \[\abs{\equivalences(\Lambda(\sigma))}\simeq\abs{\equivalences(\C T,\Sigma)}.\]
\end{proposition}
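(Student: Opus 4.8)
The plan is to produce a single functor realising this homotopy equivalence, imitating the proof of Proposition \ref{intermediate}. First I would define the completion functor $C\colon\equivalences(\Lambda(\sigma))\to\equivalences(\C T,\Sigma)$ sending a DG-category $\C B$ to its completion $\hat{\C B}$ by finite direct sums and retracts of idempotents in $H^0$. Conditions $(3'')$ and its graded refinement $(3''')$ ensure that $\hat{\C B}$ is pre-triangulated with $H^0(\hat{\C B})$ equivalent to $(\C T,\Sigma)$ as a suspended category, so $C$ lands in $\equivalences(\C T,\Sigma)$; and $C$ preserves quasi-equivalences because the completion is functorial and homotopical. The goal is then to show that $C$ is a Dwyer--Kan equivalence of categories with weak equivalences, which implies the desired homotopy equivalence of classification spaces.

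Next I would check essential surjectivity up to quasi-equivalence: given $\C A\in\equivalences(\C T,\Sigma)$, choose an object $X\in\C A$ mapping to a basic additive generator of $H^0(\C A)\simeq\C T$, and let $\C B\subset\C A$ be the full DG-subcategory on $X$. The list of equivalent reformulations of (3) recorded before Definition \ref{ump} shows that $\C B\in\equivalences(\Lambda(\sigma))$, and the induced DG-functor $\hat{\C B}=C(\C B)\to\C A$ is a quasi-equivalence because $X$ generates $H^0(\C A)$ under finite direct sums and retracts of idempotents and $H^0(\C A)$, being equivalent to $\C T$, is idempotent complete. Hence every object of $\equivalences(\C T,\Sigma)$ is quasi-equivalent to one in the image of $C$.

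Finally, for homotopical full faithfulness I would use that, since $\Lambda$ is basic, the unique object of any $\C B\in\equivalences(\Lambda(\sigma))$ is itself a basic additive generator of $H^0(\hat{\C B})\simeq\proj{\Lambda}$; hence a quasi-equivalence $\hat{\C B}\to\hat{\C B'}$ carries it to an object isomorphic in $H^0$ to the unique object of $\C B'$, and after adjusting by an auto-equivalence of $\hat{\C B'}$ it restricts to a quasi-equivalence $\C B\to\C B'$ of full DG-subcategories on a single object. To make ``restriction to the full subcategory on the generator'' functorial, I would pass through the intermediate category of DG-categories equipped with a distinguished object, with weak equivalences the quasi-equivalences preserving it: the forgetful functor from this category to $\equivalences(\C T,\Sigma)$ should induce a homotopy equivalence on classification spaces (the space of choices of basic additive generator being homotopically trivial, since every automorphism of such a generator is already present as a weak equivalence), and with respect to this presentation $C$ and the restriction functor are mutually inverse up to natural quasi-equivalence, exactly as in Proposition \ref{intermediate}. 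I expect this last point---showing that the choice of basic additive generator is homotopically irrelevant---to be the main obstacle; everything else is formal manipulation of classification spaces together with the reformulations of condition (3) already available. Alternatively, one could invoke the homotopy-theoretic machinery for classification spaces developed in the author's earlier work and conclude directly from essential surjectivity together with a comparison of mapping spaces.
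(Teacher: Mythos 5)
Your overall architecture (a completion functor one way, restriction to a generator the other way) matches the paper's, and the completion direction is essentially fine: the paper implements it as the (co)restriction of a Morita-fibrant replacement functor $R$, which is functorial and lands in $\equivalences(\C T,\Sigma)$ for the reasons you give. The genuine gap is in the other direction, exactly where you predicted the main obstacle. Restricting to the full DG-subcategory on a single \emph{chosen} basic additive generator is not functorial, and your proposed fix --- passing through DG-categories with a distinguished generator and arguing that ``the space of choices of basic additive generator is homotopically trivial, since every automorphism of such a generator is already present as a weak equivalence'' --- fails: the relevant homotopy fiber of the forgetful map is (a union of components of) the mapping space $\rmap_{\equivalences}(k,\C A)$, whose homotopy groups at a generator $X$ are $\aut_{H^0(\C A)}(X)$ in degree $1$ and $H^{1-n}\C A(X,X)$ in degrees $n\geq 2$; since $H^*\C A(X,X)\cong\Lambda(\sigma)$ is nonzero in every degree, this fiber is connected but very far from contractible. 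This is precisely why the subsequent proposition in the paper, comparing $\dgalgebras(\Lambda(\sigma))$ with $\equivalences(\Lambda(\sigma))$, only asserts a bijection on $\pi_0$ and not a homotopy equivalence. So your pointed detour can at best give a $\pi_0$-bijection, not the claimed homotopy equivalence of classification spaces.

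The paper's way around this is to make the restriction functor choice-free: $j(\C A)$ is the full sub-DG-category on \emph{all} objects that become basic additive generators in $H^0(\C A)$; since any two such objects are isomorphic in $H^0$, conditions (1), (2), $(3'')$ still hold, and $j$ is strictly functorial on quasi-equivalences. One then has an obvious natural quasi-equivalence from the identity to $jR$, and the comparison of $Rj$ with the identity is carried out not by constructing a DG-functor $\widehat{j(\C A)}\to\C A$ directly (which, as you half-suspected, is not available on the nose) but by applying a functorial factorization in the Morita model structure to the inclusion $j(\C A)\hookrightarrow\C A$, producing a natural zig-zag of quasi-equivalences between $\C A$ and $Rj(\C A)$. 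If you replace your pointed intermediate category by this ``all generators'' subcategory and your direct map $\hat{\C B}\to\C A$ by the factorization zig-zag, your argument closes.
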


\begin{proof}
	We define functors
	\[\equivalences(\Lambda(\sigma))\mathop{\rightleftarrows}\limits^R_{j}\equivalences(\C T,\Sigma)\]
	in the following way. The functor $j$ takes $\C A$ to the full sub-DG-category $j(\C A)$ spanned by the objects which become basic additive generators in $H^0(\C A)$. Therefore $j(\C A)$ satisfies $(1)$, $(2)$, and $(3'')$, since the completion of $j(\C A)$ mentioned therein is quasi-equivalent to $\C A$. In particular, the full inclusion $j(\C A)\subset\C A$ is a Morita equivalence. The functor $R$ is the (co)restriction of a fibrant replacement functor in $\morita$. 
	
	In order for $R$ to be well defined we must check that, for each $\C B$ in the source, $D^c(R(\C B))$ is equivalent to $(\C T,\Sigma)$. The natural cofibration $\C B\rightarrowtail R(\C B)$ is a Morita equivalence, so it induces a suspended equivalence $D^c(\C B)\simeq D^c(R(\C B))$, and  $D^c(\C B)$ is equivalent to $(\C T,\Sigma)$ by $(3''')$. 
	
	Both $j$ and $R$ preserve quasi-equivalences. There is an obvious natural quasi-equivalence from the identity in $\equivalences(\Lambda(\sigma))$ to $jR$. Hence, both functors induce homotopic maps on the classification space. 
	
	We will finish the proof as soon as we show that the functor $Rj$ induces a map homotopic to the identity in the classification space of $\equivalences(\C T,\Sigma)$. For this, we choose a functorial factorization in $\morita$, which functorially sends each DG-functor $f\colon\C A\rightarrow\C B$ to a factorization
	\[\C A\stackrel{\sim}{\rightarrowtail}R'(f)\twoheadrightarrow\C B\]
	of $f$ consisting of a trivial cofibration followed by a fibration. Therefore, here, and also below in this proof, $\sim$ stands for weak equivalence in the Morita model structure. We can suppose that $R(\C A)=R'(\C A\rightarrow e)$, where $e$ is the terminal DG-category. Given $\C A$ in $\equivalences(\C T,\Sigma)$, we can apply the functorial factorization to the commutative square
	\begin{center}
		\begin{tikzcd}
		j(\C A)\ar[r,"f", "\sim"']\ar[d, equal]&\C A\ar[d, two heads]\\
		j(\C A)\ar[r]&e
		\end{tikzcd}
	\end{center}
	where the top arrow is the inclusion which, as we pointed out above, is a Morita equivalence. This yields
	\begin{center}
		\begin{tikzcd}
		j(\C A)\ar[r, tail, "\sim"]\ar[d, equal]&R'(f)\ar[r, two heads]\ar[d]&\C A\ar[d, two heads]\\
		j(\C A)\ar[r, tail, "\sim"]&Rj(\C A)\ar[r, two heads]&e
		\end{tikzcd}
	\end{center}	
	Here, the top right horizontal arrow and the middle vertical arrow are Morita equivalences between fibrant objects in $\morita$. Here we use the 2-out-of-3 property and the fact that $f$ is a Morita equivalence. Therefore, those two maps are quasi-equivalences. This shows that the maps induced by the functors $Rj$ and $\C A\mapsto R'(j(\C A)\rightarrow\C A)$ are homotopic to the identity map in $\abs{\equivalences(\C T,\Sigma)}$.
\end{proof}

Let $\dgalgebras$ be the usual model category of (unital) DG-algebras \cite[\S5]{schwede_algebras_2000}. Weak equivalences are quasi-isomorphisms and fibrations are surjections. Let $\dgalgebras(\Lambda(\sigma))$ the full subcategory of DG-algebras $A$ with $H^*(A)\cong\Lambda(\sigma)$ whose universal Massey product is an edge unit.

\begin{proposition}
	There is a bijection
	\[\pi_0\abs{\dgalgebras(\Lambda(\sigma))}\simeq\pi_0\abs{\equivalences(\Lambda(\sigma))}.\]
\end{proposition}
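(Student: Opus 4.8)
The bijection will be induced by the functor that regards a DG-algebra as a DG-category with a single object. A DG-functor between one-object DG-categories is exactly a morphism of DG-algebras, and it is a quasi-equivalence precisely when it is a quasi-isomorphism; moreover a DG-algebra $A$ with $H^*(A)\cong\Lambda(\sigma)$ whose universal Massey product is an edge unit satisfies conditions $(1)$ and $(2)$ tautologically and condition $(3)$ by way of $(3'''')$. Hence this functor restricts to a weak-equivalence-preserving inclusion $\dgalgebras(\Lambda(\sigma))\hookrightarrow\equivalences(\Lambda(\sigma))$, which induces the map in the statement. The plan is to prove separately that it is surjective and injective on $\pi_0$.

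For surjectivity, given $\C B\in\equivalences(\Lambda(\sigma))$, choose an object $b$ and consider the full sub-DG-category $\C B(b,b)$, a DG-algebra. By $(1)$ every object of $\C B$ is isomorphic to $b$ in $H^0(\C B)$, so the inclusion $\C B(b,b)\hookrightarrow\C B$ is quasi-essentially surjective; being tautologically quasi-fully faithful, it is a quasi-equivalence. By $(2)$ we have $H^*(\C B(b,b))\cong\Lambda(\sigma)$, and $\C B(b,b)$ inherits condition $(3)$ from $\C B$ by quasi-equivalence invariance; by \cite[Corollary 6.2]{muro_first_2019} this says precisely that the universal Massey product of $\C B(b,b)$ is an edge unit. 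Therefore $\C B(b,b)\in\dgalgebras(\Lambda(\sigma))$, and it is connected to $\C B$ by a quasi-equivalence, so $[\C B]$ lies in the image.

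For injectivity, suppose $A,A'\in\dgalgebras(\Lambda(\sigma))$ are joined by a finite zig-zag of quasi-equivalences through objects of $\equivalences(\Lambda(\sigma))$. Replacing each intermediate term $\C B_i$ by the DG-algebra $\C B_i(b_i,b_i)$ for a chosen object $b_i$ (which lies in $\dgalgebras(\Lambda(\sigma))$ by the surjectivity argument), and using quasi-full-faithfulness to turn a quasi-equivalence between one-object DG-categories into a quasi-isomorphism of DG-algebras, one reduces the problem to the following \emph{key point}, which I expect to be the main obstacle: if $c,c'$ are objects of a DG-category $\C D$ that are isomorphic in $H^0(\C D)$, then $\C D(c,c)$ and $\C D(c',c')$ are connected by a zig-zag of quasi-isomorphisms of DG-algebras. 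I would prove this by choosing a degree-$0$ cocycle $\phi\in\C D(c,c')$ representing the isomorphism --- necessarily a homotopy equivalence --- and adjoining a strict two-sided inverse to form the localization $\C D\to\C D[\phi^{-1}]$, which is a quasi-equivalence exactly because $\phi$ was already a homotopy equivalence. In $\C D[\phi^{-1}]$ the objects $c$ and $c'$ become isomorphic, so conjugation by $\phi$ is an honest isomorphism of DG-algebras $\C D[\phi^{-1}](c,c)\cong\C D[\phi^{-1}](c',c')$; together with the quasi-isomorphisms $\C D(c,c)\to\C D[\phi^{-1}](c,c)$ and $\C D(c',c')\to\C D[\phi^{-1}](c',c')$ furnished by quasi-full-faithfulness of the localization, this yields the desired zig-zag, all of whose terms still belong to $\dgalgebras(\Lambda(\sigma))$ because $H^*$ and the universal Massey product are invariant under quasi-isomorphism. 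Alternatively, one may invoke a general comparison between the homotopy theory of DG-algebras and that of DG-categories whose $H^0$ has a single isomorphism class of objects.
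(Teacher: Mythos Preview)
Your argument is correct and your surjectivity step matches the paper's in spirit, but the overall strategy differs. The paper does not separate surjectivity and injectivity; instead it identifies the \emph{homotopy fibre} of the map $\abs{\dgalgebras(\Lambda(\sigma))}\to\abs{\equivalences(\Lambda(\sigma))}$ at a DG-category $\C B$ with the mapping space $\rmap_{\equivalences}(k,\C B)$, citing \cite[Corollary~A.0.5]{toen_homotopical_2008} and the proof of \cite[Proposition~2.3.3.5]{toen_homotopical_2008}. Since $\pi_0$ of this mapping space is the set of isomorphism classes in $H^0(\C B)$, condition~(1) makes every fibre connected and hence nonempty, giving the bijection on $\pi_0$ in one stroke. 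Your approach is more elementary and self-contained: it avoids the To\"en--Vezzosi machinery at the cost of the hands-on localization $\C D\to\C D[\phi^{-1}]$, which is indeed a quasi-equivalence when $\phi$ is already invertible in $H^0$ (this is standard, e.g.\ from the generating trivial cofibrations in Tabuada's model structure, but deserves a reference or a line of justification). Your closing ``alternatively'' is precisely the paper's route.
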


\begin{proof}
	We can regard any DG-algebra as a DG-category with only one object. This defines an inclusion $\dgalgebras\subset \equivalences$ preserving weak equivalences. Using the characterization of $\equivalences(\Lambda(\sigma))$ in terms of universal Massey products, given by $(1)$, $(2)$, and $(3'''')$ above, we see that the previous inclusion (co)restricts to $\dgalgebras(\Lambda(\sigma))\subset \equivalences(\Lambda(\sigma))$. This is where the universal Massey product comes into play.
	
	Using \cite[Corollary A.0.5]{toen_homotopical_2008}, and arguing as in the proof of \cite[Proposition 2.3.3.5]{toen_homotopical_2008}, it is easy to see that the homotopy fiber of the map
	\[\abs{\dgalgebras(\Lambda(\sigma))}\longrightarrow\abs{\equivalences(\Lambda(\sigma))}\]
	induced by the previous inclusion at a DG-category $\C B$ in the target category is the mapping space
	\[\rmap_{\equivalences}(k,\C B),\]
	where $k$ is the ground field regarded as a DG-category with only one object with endomorphism algebra $k$. In general, $\pi_0$ of such mapping space is the set of isomorphism classes of objects in $H^0(\C B)$. In our case, this is a singleton, hence the statement follows.
\end{proof}

\begin{definition}\label{ets}
	An \emph{enhanced triangulated structure} on $(\C T,\Sigma)$ is a minimal $A$-infinity algebra structure on $\Lambda(\sigma)$ whose universal Massey product is an edge unit. Two enhanced triangulated structures are \emph{gauge equivalent} if there is an $A$-infinity morphism with identity linear part between them.
\end{definition}

The gauge equivalence relation is an honest equivalence relation by well-known properties of $A$-infinity morphisms. The quotient set will be denoted by
\[\ets{\C T,\Sigma}.\]

Definition \ref{ets} is explained by the characterization of pre-triangulated DG- and $A$-infinity categories, in the sense of Bondal-Kapranov \cite{bondal_enhanced_1991}, in terms of edge units, used above, compare \cite[Corollary 6.2]{muro_first_2019}.

We now relate the sets $\etc{\C T,\Sigma}$ and $\ets{\C T,\Sigma}$. 

\begin{remark}\label{action}
	Notice that the automorphism group $\aut(\Lambda(\sigma))$ of the graded algebra $\Lambda(\sigma)$ acts by conjugation on the right of the set of minimal $A$-infinity algebra structures $(\Lambda(\sigma),m_3,\dots,m_n,\dots)$ on $\Lambda(\sigma)$. More precisely, given $g\in \aut(\Lambda(\sigma))$,  \[(\Lambda(\sigma),m_3,\dots,m_n,\dots)^g=(\Lambda(\sigma),g^{-1}m_3g^{\otimes^3},\dots,g^{-1}m_ng^{\otimes^n},\dots).\] This action passes to $\ets{\C T,\Sigma}$. 
\end{remark}

\begin{theorem}\label{gauge}
	There is a bijection \[\ets{\C T,\Sigma}/\operatorname{Aut}(\Lambda(\sigma))\cong\etc{\C T,\Sigma}.\]
\end{theorem}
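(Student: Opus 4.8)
The plan is to reduce the statement, via the chain of equivalences already established, to a purely algebraic identification. Combining Proposition~\ref{intermediate} with the three propositions that follow it, we obtain
\[\etc{\C T,\Sigma}=\pi_0\abs{\morita(\C T,\Sigma)}\cong\pi_0\abs{\equivalences(\C T,\Sigma)}\cong\pi_0\abs{\equivalences(\Lambda(\sigma))}\cong\pi_0\abs{\dgalgebras(\Lambda(\sigma))},\]
so it suffices to produce a bijection
\[\pi_0\abs{\dgalgebras(\Lambda(\sigma))}\cong\ets{\C T,\Sigma}/\aut(\Lambda(\sigma)).\]
Since $\dgalgebras$ is a model category, $\pi_0$ of the classification space of the full subcategory $\dgalgebras(\Lambda(\sigma))$ is its set of objects modulo the equivalence relation generated by quasi-isomorphism; moreover the two conditions defining this subcategory --- the isomorphism type of the cohomology algebra and the edge-unit property of the universal Massey product --- are invariant under quasi-isomorphism, so any zig-zag of quasi-isomorphisms in $\dgalgebras$ with endpoints in $\dgalgebras(\Lambda(\sigma))$ lies entirely within it.

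First I would construct a map $\ets{\C T,\Sigma}\to\pi_0\abs{\dgalgebras(\Lambda(\sigma))}$. Given a minimal $A$-infinity algebra structure $(\Lambda(\sigma),m_3,m_4,\dots)$ with edge-unit universal Massey product, rectify it to an honest DG-algebra, for instance by the cobar construction on its bar construction, which comes equipped with a strict comparison quasi-isomorphism to $(\Lambda(\sigma),m_\bullet)$. This DG-algebra has cohomology $\Lambda(\sigma)$, and since the universal Massey product is an $A$-infinity quasi-isomorphism invariant (Remark~\ref{ump2}), it belongs to $\dgalgebras(\Lambda(\sigma))$. Gauge-equivalent structures are $A$-infinity isomorphic and hence have quasi-isomorphic rectifications --- here one uses that an $A$-infinity quasi-isomorphism between DG-algebras is witnessed by a zig-zag of ordinary quasi-isomorphisms, again through cobar--bar rectification --- while applying $g\in\aut(\Lambda(\sigma))$ merely replaces the structure by the strictly isomorphic one $(\Lambda(\sigma),m_\bullet)^{g}$, not affecting the rectification up to quasi-isomorphism. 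Hence the assignment descends to $\ets{\C T,\Sigma}/\aut(\Lambda(\sigma))$.

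For the inverse, given $A$ in $\dgalgebras(\Lambda(\sigma))$, Kadeishvili's theorem \cite{kadeishvili_theory_1980, lefevre-hasegawa_sur_2003} endows $H^*(A)$ with a minimal $A$-infinity structure together with an $A$-infinity quasi-isomorphism to $A$; transporting along a chosen graded algebra isomorphism $H^*(A)\cong\Lambda(\sigma)$ yields a minimal $A$-infinity structure on $\Lambda(\sigma)$ whose universal Massey product matches that of $A$, hence is an edge unit, so it represents a class in $\ets{\C T,\Sigma}$. Any two choices of minimal model or of transfer isomorphism differ by an $A$-infinity isomorphism of minimal $A$-infinity algebras, and every such isomorphism is, after composing with the strict isomorphism given by its linear part $g=f_1\in\aut(\Lambda(\sigma))$, a gauge equivalence; consequently the resulting class in $\ets{\C T,\Sigma}/\aut(\Lambda(\sigma))$ is well defined, and the same factorization shows it depends only on the quasi-isomorphism class of $A$, using uniqueness of minimal models up to $A$-infinity isomorphism. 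The two maps are mutually inverse, since $(\Lambda(\sigma),m_\bullet)$ is a minimal model of its own rectification and any rectification of a minimal model of $A$ is quasi-isomorphic to $A$. I would also note that the $\aut(\Lambda(\sigma))$-action genuinely preserves the edge-unit condition --- so that $\ets{\C T,\Sigma}$ is stable under it --- by naturality of the spectral sequence \eqref{spectral_graded}, of its edge morphisms, and of the comparison maps used to define edge units, with respect to the graded algebra automorphism $g$.

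The hard part is the interface between $A$-infinity and DG-algebras: one needs that $\pi_0$ of the classification space $\abs{\dgalgebras(\Lambda(\sigma))}$, defined through ordinary quasi-isomorphisms, coincides with the set of $A$-infinity isomorphism classes of minimal $A$-infinity structures on $\Lambda(\sigma)$ with edge-unit universal Massey product. This rests on the homotopy-theoretic equivalence between DG- and $A$-infinity algebras realized by cobar--bar rectification, together with Kadeishvili's minimal model theorem; once that bridge is in place, the elementary factorization of $A$-infinity isomorphisms into an automorphism part and a gauge part is exactly what turns the set of such isomorphism classes into the quotient $\ets{\C T,\Sigma}/\aut(\Lambda(\sigma))$.
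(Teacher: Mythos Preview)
Your argument is correct and complete in outline, but it follows a genuinely different route from the paper's own proof. Both start from the same chain of bijections reducing $\etc{\C T,\Sigma}$ to $\pi_0\abs{\dgalgebras(\Lambda(\sigma))}$. From there, the paper argues structurally: it considers the map of classification spaces $\abs{\dgalgebras}\to\abs{\chain}$ induced by the forgetful functor, identifies $\pi_0$ of its homotopy fiber at $\Lambda(\sigma)$ with gauge equivalence classes of minimal $A$-infinity structures on the underlying graded vector space (citing operadic moduli results from \cite{muro_moduli_2014,muro_homotopy_2016,muro_cylinders_2016}), and then reads off $\pi_0\abs{\dgalgebras(\Lambda(\sigma))'}$ as this set modulo the action of $\pi_1(\abs{\chain},\Lambda(\sigma))$, the automorphism group of the graded vector space. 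Restricting to the correct algebra structure on cohomology and the edge-unit condition cuts this down to the quotient by the graded algebra automorphism group $\aut(\Lambda(\sigma))$.

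Your approach instead builds the bijection by hand: rectify in one direction via cobar--bar, take Kadeishvili minimal models in the other, and use the elementary factorization of an $A$-infinity isomorphism between minimal structures into its linear part (a graded algebra automorphism) followed by a gauge equivalence. This is more classical and avoids the fibration-sequence machinery, at the cost of relying on rectification in the \emph{unital} setting, which is where the paper's cited references do real work (see \cite{muro_homotopy_2016}); your closing paragraph flags this honestly as the hard input. The paper's method has the advantage of fitting into a larger framework that also controls higher homotopy groups of the moduli space, while yours is self-contained once one grants the DG/$A$-infinity equivalence.
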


\begin{proof}
	Let $\chain$ be the category of chain complexes with the projective model structure. The forgetful functor $\dgalgebras\rightarrow\chain$ induces a map
	\[\abs{\dgalgebras}\longrightarrow\abs{\chain}.\]
	Forgetting the product, we can regard $\Lambda(\sigma)$ as a fibrant-cofibrant object in $\chain$ with trivial differential. By \cite[Theorem 4.6]{muro_moduli_2014}, \cite[Theorem 1.2 and Remark 6.5]{muro_homotopy_2016}, and \cite[Corollary 2.3]{muro_cylinders_2016}, the set of connected components $\pi_0F'$ of the homotopy fiber $F'$ of the previous map at $\Lambda(\sigma)$ is a quotient of the set of minimal $A$-infinity algebra structures $(\Lambda(\sigma), m_2',m_3,\dots,m_n,\dots)$ with underlying graded vector space $\Lambda(\sigma)$. Here, $m_2'$ endows $\Lambda(\sigma)$ with a unital graded associative algebra structure which may be different from the given one $m_2$. The equivalence relation is given by the existence of an $A$-infinity morphism with identity linear part, as in the definition of $\ets{\C T,\Sigma}$. 
	
	Let $\dgalgebras(\Lambda(\sigma))'$ be the full subcategory of $\dgalgebras$ spanned by all the DG-algebras $A$ with such that $H^*(A)$ is isomorphic to $\Lambda(\sigma)$ as graded vector spaces. The set $\pi_0\abs{\dgalgebras(\Lambda(\sigma))'}$ is by definition the image of the map $\pi_0F'\rightarrow\pi_0\abs{\dgalgebras}$, hence it is the quotient of $\pi_0F'$ by the action of $\pi_1(\abs{\chain},\Lambda(\sigma))$. This group is the automorphism group of $\Lambda(\sigma)$ as a graded vector space. The action on $\pi_0F'$, can be described as follows. The automorphism group of $\Lambda(\sigma)$ acts on the right of the previous set of $A$-infinity algebra structures $(\Lambda(\sigma), m_2',m_3,\dots,m_n,\dots)$ by conjugation, and this induces the action on the quotient set $\pi_0F'$. 
	
	The category $\dgalgebras(\Lambda(\sigma))$ is the full subcategory of $\dgalgebras(\Lambda(\sigma))'$ spanned by the objects $A$ with $H^*(A)\cong\Lambda(\sigma)$ as graded algebras whose universal Massey product is an edge unit. Let $F\subset F'$ be the full subspace spanned by the connected components such that the graded algebra $(\Lambda(\sigma),m_2')$ is isomorphic to $(\Lambda(\sigma), m_2)$ and the universal Massey product $\{m_3\}$ is an edge unit. Note that all these conditions are preserved by gauge equivalence. Then, we have that $\pi_0\abs{\dgalgebras(\Lambda(\sigma))}\cong \pi_0F/\pi_1(\abs{\chain},\Lambda(\sigma))$. Any element in the quotient has a representative with $m_2=m_2'$ and $\pi_0F$ can be alternatively described as the quotient of those representatives by the subgroup of automorphisms of the graded vector space $\Lambda(\sigma)$ which fix $m_2$. This is precisely the automorphism group of $\Lambda(\sigma)$ as a graded algebra. This theorem now follows from the previous results in this section.
\end{proof}

\section{Enhanced triangulated structures and edge units}\label{vacolap}

In this section, the ground field $k$ is required to be perfect. Consider a finite category $\C T$ equipped with an automorphism $\Sigma\colon\C T\rightarrow\C T$. Let $\Lambda$ be the endomorphism algebra of a basic additive generator. We assume that $\Lambda$ is Frobenius, as per Freyd's necessary condition for the existence of triangulated structures. Moreover, let $\sigma\colon\Lambda\rightarrow\Lambda$ be an automorphism induced by $\Sigma$. Recall from Definitions \ref{ump} and \ref{ets} the notions of universal Massey product, enhanced triangulated structure, and gauge equivalence. The group $\aut(\Lambda(\sigma))$ acts on the right of the Hochschild cohomology of $\Lambda(\sigma)$ by conjugation, more precisely, given $g\in \aut(\Lambda(\sigma))$ and $x\in \hh{\star,\ast}{\Lambda(\sigma),\Lambda(\sigma)}$, $x^g=g^*(g^{-1})_*(x)$. This is action is compatible with the Gerstenhaber algebra structure.

\begin{theorem}\label{classification}
	Universal Massey products define a bijection between $\ets{\C T,\Sigma}$ and the set of edge units $u\in\hh{3,-1}{\Lambda(\sigma),\Lambda(\sigma)}$ satisfying $\gsquare(u)=0$. This bijection is $\aut(\Lambda(\sigma))$-equivariant.
\end{theorem}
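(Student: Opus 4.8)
The plan is to build the bijection directly using the homotopy-theoretic description of $\ets{\C T,\Sigma}$ from Section \ref{enhancements} and the obstruction theory for $A$-infinity structures, and then to cut down to the prescribed set of edge units via Propositions \ref{kernel}, \ref{kick}, and \ref{bijection}. First I would recall that an enhanced triangulated structure is a minimal $A$-infinity algebra structure $(\Lambda(\sigma),m_3,m_4,\dots)$ on the graded algebra $\Lambda(\sigma)$ whose universal Massey product $\{m_3\}$ is an edge unit, with gauge equivalence as the identification. Assigning $(\Lambda(\sigma),m_3,\dots)\mapsto\{m_3\}$ is the candidate map; it is well defined because gauge equivalent structures have the same universal Massey product (standard $A$-infinity theory, Remark \ref{ump2}), it lands in edge units by the very definition of $\ets{\C T,\Sigma}$, and it lands in the subset with $\gsquare(u)=0$ by Proposition \ref{kick} once we observe that $\{m_3\}$ and $\{m_3\}+\{\delta\}\cdot y$ differ by a gauge-equivalent modification — more precisely, I would argue that the obstruction to extending an $A_3$-structure with a prescribed $m_3$ to an $A_4$-structure is $\gsquare(\{m_3\})$ (up to sign and a coboundary), so the existence of a full $A$-infinity structure forces $\gsquare(u)=0$. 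That last point is the conceptual heart: the quadratic Gerstenhaber obstruction $\gsquare$ is exactly the primary obstruction in the Bousfield--Kan / obstruction-theory tower governing $A$-infinity extensions, so an edge unit underlying a genuine enhancement must satisfy $\gsquare(u)=0$.

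For surjectivity onto $\{u : \gsquare(u)=0\}$, given such an edge unit $u$, I would produce a minimal $A$-infinity structure with $\{m_3\}=u$ by climbing the obstruction tower: choose a cocycle $m_3$ representing $u$, and inductively extend to $m_4,m_5,\dots$. The obstruction to extending past stage $n$ lives in $\hh{n+1,\,3-n-1}{\Lambda(\sigma),\Lambda(\sigma)}$ (the relevant slot for $A_n$-to-$A_{n+1}$), and the key input is Lemma \ref{non_singularity}: multiplication by $u$ is an isomorphism $\hh{p,q}{\Lambda(\sigma),\Lambda(\sigma)}\to\hh{p+3,q-1}{\Lambda(\sigma),\Lambda(\sigma)}$ for $p\geq 2$ and an epimorphism for $p=1$. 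Combined with Remark \ref{nulo2} — which says that, when $\gsquare(u)=0$, multiplication by $u$ is a chain map for the differential $[u,-]$ on Hochschild cohomology, and that $\{\delta\}\cdot-$ is a null-homotopy for it — one shows that all the higher obstructions, which are iterated expressions in $m_3,\dots,m_n$ and ultimately controlled by $u\cdot-$, can be killed. This is the argument the author has clearly been setting up: Propositions \ref{nulo}, \ref{euler_sub_lie}, \ref{euler_gsquare_2}, Remark \ref{nulo2}, and Lemma \ref{non_singularity} are precisely the algebraic lemmas needed to run the induction.

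For injectivity, suppose two enhanced triangulated structures have gauge-equivalent-up-to-nothing universal Massey products, i.e.~$\{m_3\}=\{m_3'\}=u$ with $\gsquare(u)=0$. Then $m_3$ and $m_3'$ differ by a coboundary, so after a gauge transformation (changing $m_3$ by a $d'$-exact term is realized by an $A$-infinity automorphism with identity linear part) I may assume $m_3=m_3'$. Now both structures are extensions of the same $A_3$-structure, and I would show inductively that at each stage the discrepancy $m_n-m_n'$ is a cocycle lying in a Hochschild group on which $u\cdot-$ is injective (again Lemma \ref{non_singularity}, using that the relevant horizontal degree is $\geq 2$), hence can be absorbed into a further gauge transformation; the coherence of this is exactly the standard uniqueness-of-minimal-models argument run with the extra bookkeeping that $\gsquare(u)=0$ provides via Remark \ref{nulo2}. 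Finally, $\aut(\Lambda(\sigma))$-equivariance is formal: the conjugation action on $A$-infinity structures described in Remark \ref{action} induces on universal Massey products exactly the action $x\mapsto g^*(g^{-1})_*(x)$ used in the statement of this section, because $\{m_3\}$ is natural in the graded algebra and the $m_3$-component transforms by $g^{-1}m_3 g^{\otimes 3}$.

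The main obstacle I expect is making the obstruction-theory bookkeeping precise — identifying the primary obstruction with $\gsquare(u)$ (with the correct sign and in the correct bidegree $\hh{5,-2}{\Lambda(\sigma),\Lambda(\sigma)}$), and then showing the higher obstructions are systematically killed. The cleanest route is probably not to manipulate the defining $A$-infinity equations by hand but to invoke the moduli-space machinery already cited in the proof of Theorem \ref{gauge} (\cite{muro_moduli_2014, muro_homotopy_2016, muro_cylinders_2016}): there the set of minimal $A$-infinity structures on a fixed graded vector space, up to gauge, is computed as $\pi_0$ of an explicit homotopy fiber, and the edge-unit condition plus $\gsquare(u)=0$ carve out exactly the relevant sub-$\pi_0$. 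Reconciling that description with the naive obstruction tower — and checking that Lemma \ref{non_singularity} and Remark \ref{nulo2} suffice to guarantee the fiber is a point over each valid $u$ — is where the real work lies; everything else is formal.
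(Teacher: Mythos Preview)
Your overall strategy matches the paper's: both run the obstruction tower for minimal $A_\infty$-structures on $\Lambda(\sigma)$, identify $\gsquare(u)$ as the first nontrivial obstruction, and use Lemma \ref{non_singularity} together with Remark \ref{nulo2} to kill the higher ones. The paper implements this via the extended Bousfield--Kan spectral sequence of \cite{muro_enhanced_2020}: its $E_2$ page is Hochschild cohomology with $d_2=\pm[u,-]$ almost everywhere, and the key step is proving $E_3^{p,q}=0$ for all $p\geq 2$. This is done by organising $E_2$ into chain complexes $\bar C^*_n,\bar D^*_n$ with differential $[u,-]$ and observing that $u\cdot-$ is simultaneously a quasi-isomorphism between them (Lemma \ref{non_singularity}) and null-homotopic (Proposition \ref{nulo}, Remark \ref{nulo2}), hence their cohomology vanishes. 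This is exactly the mechanism you gesture at, so you have correctly located the heart of the proof.

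Three places where your sketch needs more than you indicate. First, injectivity: matching $m_n$ with $m_n'$ stage by stage via gauge transformations does not obviously assemble into a single gauge equivalence of the full $A_\infty$-structures; the paper handles this by showing $E_3^{n,n}$ is a singleton for $n\geq 2$ and then invoking $\lim^1_n\pi_1X_n=0$, which follows from the $E_3$ collapse via \cite[IX.5.4]{bousfield_homotopy_1972}. Your formulation ``$m_n-m_n'$ lies where $u\cdot-$ is injective, hence can be absorbed'' is not the right mechanism --- it is $[u,-]$-acyclicity, not injectivity of $u\cdot-$, that kills the difference classes. Second, in characteristic $2$ the differential $d_2\colon E_2^{0,1}\to E_2^{2,2}$ is the genuinely quadratic map $x\mapsto x^2+[u,x]$ rather than $[u,-]$, and the paper needs a separate argument (auxiliary quadratic maps $\beta$, $\gamma$, $\lambda$ and a commutative ladder showing $\beta$ is bijective) to get the required vanishing for $C^*_1$; you do not mention this. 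Third, your invocation of Proposition \ref{kick} to show $\gsquare(\{m_3\})=0$ is misplaced: that vanishing is purely because $\gsquare(\{m_3\})$ is the obstruction (at $n=r=2$ in the tower, via \cite[Proposition 6.7]{muro_enhanced_2020}); Proposition \ref{kick} plays no role in this theorem and is used only downstream in Proposition \ref{bijection}.
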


\begin{proof}
	The map in the statement is clearly $\aut(\Lambda(\sigma))$-equivariant, see Remark \ref{action}.
	
	The set of gauge equivalence classes of arbitrary minimal $A$-infinity algebra structures is
	\[\pi_0\operatorname{Map}_{\operatorname{dgOp}}(\mathtt{A}_\infty,\mathtt{E}(\Lambda(\sigma))).\]
	Here $\mathtt{A}_\infty$ is the $A$-infinity operad, $\mathtt{E}(\Lambda(\sigma))$ is the endomorphism operad of the graded vector space $\Lambda(\sigma)$, and $\operatorname{Map}_{\operatorname{dgOp}}$ stands for the mapping space in the model category of differential graded operads. The universal Massey product is invariant by gauge equivalences, i.e.~if two minimal $A$-infinity algebra structures on the graded vector space $\Lambda(\sigma)$ are quasi-isomorphic by an $A$-infinity quasi-isomorphism with identity linear part then they have the same product and the same universal Massey product \cite[Lemme B.4.2]{lefevre-hasegawa_sur_2003}. Therefore the property of being an enhanced triangulated structure on $(\C T,\Sigma)$ is also invariant by gauge equivalences.
	
	The cofibrant DG-operad $\mathtt{A}_\infty$ is the union of a sequence of cofibrations $\mathtt{A}_n\subset\mathtt{A}_{n+1}$, $n\geq 2$, where $\mathtt{A}_n$ is the $A_n$ operad. In \cite{muro_enhanced_2020}, we extend the Bousfield--Kan spectral sequence of the tower of fibrations $\{X_n\}_{n\geq 0}$, with $X_n=\operatorname{Map}_{\operatorname{dgOp}}(\mathtt{A}_{n+2},\mathtt{E}(\Lambda(\sigma)))$ and bonding maps defined by restriction, for the computation of the homotopy groups of $X_\infty=\lim_nX_n=\operatorname{Map}_{\operatorname{dgOp}}(\mathtt{A}_\infty,\mathtt{E}(\Lambda(\sigma)))$. The Bousfield--Kan terms $E_r^{p,q}$ are only defined for $q\geq p\geq 0$ and $E_\infty^{p,q}$ contributes to $\pi_{q-p}X_\infty$. The differentials look like
	\[d_r\colon E_r^{p,q}\To E_r^{p+r,q+r-1}\]
	and they are only defined for $q>p\geq 0$, so the elements of the so-called \emph{fringed line}, i.e.~$E_r^{p,p}$, $p\geq 0$, are not defined as the homology of differentials. Actually, these terms are plain pointed sets for general towers of spaces, and moreover the terms $E_r^{p,p+1}$, $p\geq 0$, are possibly non-abelian groups.

	We extend the range of definition of the previous spectral sequence to all $q\in\mathbb{Z}$ for $p\geq 2r-3$, $p\geq 0$. Most new and old terms are endowed with $k$-vector space structures. The only pointed sets are $E_r^{p,p}$ for $0\leq p\leq r-2$, and a finite amount of remaining terms are abelian groups (non-abelian groups do not show up for our particular tower of spaces). Moreover, on each page $E_r$ we define differentials $d_r$ like above out of all terms except for $0\leq p=q\leq r-1$ (with the exception of $d_2$, which is defined on $E_2^{1,1}$), and the term $E_{r+1}^{p,q}$ is given by the homology of $d_r$ whenever $E_r^{p,q}$ has an incoming and an outgoing differential (the incoming differential is taken to be $d_r=0$ if $q>p<r$). The new terms do not contribute to the homotopy groups of $X_\infty$, but they help in the computation of the Bousfield--Kan terms, and they also contain obstructions, as we explain below. Moreover, we fully compute the $E_2$ terms and the differential $d_2$ of our extended spectral sequence, in particular in the Bousfield--Kan part, where this was not previously known either. 
	
	The spectral sequence is defined if a base point in $X_\infty$ is given. If we only have a base point $x_n\in X_n$, then the spectral sequence is defined up to the terms of page $\lfloor\frac{n+3}{2}\rfloor$, hence we call it \emph{truncated}. Moreover, there is an obstruction in $E^{n+1,n}_r$, $1\leq r\leq \frac{n+3}{2}$, which vanishes if and only if there exists a vertex $x_{n+1}\in X_{n+1}$ which has the same image in $X_{n-r+1}$ as $x_n$. 
	
	The universal Massey product $\{m_3\}$ of an enhanced triangulated structure is an edge unit and satisfies $\gsquare(\{m_3\})=0$ since this is the obstruction for $n=r=2$, see \cite[Proposition 6.7]{muro_enhanced_2020}. Tautologically, any element $u\in\hh{3,-1}{\Lambda(\sigma),\Lambda(\sigma)}$ is the universal Massey product of some minimal $A_3$-algebra structure $x_1\in X_1$ defined by a representing cocycle, that we fix, which extends to $A_4$. Let $x_2'\in X_2$ be an extension. If $\gsquare(u)=0$, the obstruction vanishes and there is some $A_5$-algebra structure $x_3'\in X_3$ which restricts to $x_1\in X_1$. 
	
	Since we have $x_3'\in X_3$, the truncated spectral sequence is defined up to the $E_3$ terms, and the obstructions living therein are also defined. We will prove below that, if $u$ is an edge unit, then $E_3^{p,q}=0$ for all $p\geq 2$. Hence, the possibly non-trivial part of the $E_3$ page looks like
	\begin{center}
		\setcounter{trunco}{6}
		\setcounter{rpage}{4}
		\renewcommand{\arriba}{-3}
		\renewcommand{\derecha}{-3}
		\renewcommand{\abajo}{2}				% Cuntos cuadritos se dibujar por abajo del eje horizontal
		\begin{tikzpicture}
		
		% Malla
		
		\draw[step=\rescale,gray,very thin] (0,{-\abajo*\rescale+\margen}) grid ({(3*\rpage -3 +\derecha)*\rescale-\margen},{(3*\rpage -3 +\arriba)*\rescale-\margen});
		
		% Parte roja
		
		\filldraw[fill=red,draw=none,opacity=0.2] (0,0) -- ({(\trunco-\rpage-1)*\rescale},{(\trunco-\rpage-1)*\rescale}) -- 
		({(\trunco-\rpage-1)*\rescale},{(3*\rpage -3 +\arriba)*\rescale-\margen}) --
		(0,{(3*\rpage -3 +\arriba)*\rescale-\margen}) -- cycle;
		\draw[red!70,thick,opacity=1]  (0,0) -- ({(\trunco-\rpage-1)*\rescale},{(\trunco-\rpage-1)*\rescale})-- ({(\trunco-\rpage-1)*\rescale},{(3*\rpage -3 +\arriba)*\rescale-\margen});

		% Puntos rojos
		
		\foreach  \x in {2,...,3}
		\node[fill=red,draw=none,circle,inner sep=.5mm,opacity=1]   at ({(\x-2)*\rescale},{(\x-2)*\rescale}) {};
		
%		\node[fill=red,draw=none,circle,inner sep=.5mm,opacity=1]   at ({(3)*\rescale},{(2)*\rescale}) {};
		
		% Eje horizontal
		
		\draw [->] (0,0)  -- ({(3*\rpage -3 + \derecha)*\rescale},0) node[anchor=north] {$\scriptstyle s$};
		
		% Eje vertical
		
		\draw [->] (0,-{\abajo*\rescale}) -- (0,{(3*\rpage -3 +\arriba)*\rescale})node[anchor=east] {$\scriptstyle t$};
		\end{tikzpicture}
	\end{center}
	Let us show now that this implies that the map in the statement is bijective. 
	
	In order to check surjectivity, we prove that there is an $A$-infinity algebra structure $x_\infty \in X_\infty$ which restricts to $x_1\in X_1$. More precisely, we prove by induction that, given elements $x_i\in X_i$, $1\leq i\leq n-2$, and $x_n'\in X_n$, with $x_1$ the fixed element above, compatible by restriction, we can obtain a similar collection of elements $x_i\in X_i$, $1\leq i\leq n-1$, and $x_{n+1}'\in X_{n+1}$. Here, the only new elements are $x_{n-1}$ and $x_{n+1}'$, and we have forgot $x_n'$. The initial case is $n=3$, defined above. For each $n\geq 3$, it suffices to show the existence of some $x_{n+1}'$ with the same image in $X_{n-2}$ as $x_n'$, i.e.~$x_{n-2}$, since we can then take $x_{n-1}$ as the image of $x_{n+1}'$ in $X_{n-1}$. The obstruction to this lives in $E_3^{n+1,n}=0$, hence we are done with surjectivity.
	
	Let us check injectivity. For this, we fix $x_\infty\in X_\infty$ restricting to $x_1\in X_1$. In particular, the whole spectral sequence is defined. We want to prove that any other $x_\infty'\in X_\infty$ with universal Massey product $u$ lies in the same connected component as $x_\infty$. Since the universal Massey products agree, the restriction of $x_\infty'$ to $X_1$ lies in the same component as $x_1$. This implies that the restrictions of $x_\infty$ and $x_\infty'$ to $X_n$ lie in the same connected component for all $n\geq 1$, since $E_3^{nn}$ is a singleton for any $n\geq 2$ and	
	\[E_3^{n,n}=\ker[\pi_0X_n\rightarrow\pi_0X_{n-1}]\cap\im[\pi_0X_{n+2}\rightarrow\pi_0X_{n}].\]
	This does not directly imply that $x_\infty$ and $x_\infty'$ lie in the same component of $X_\infty$. For this, we need to know that $\lim^1_n\pi_1X_n=0$, see \cite[IX.3.1]{bousfield_homotopy_1972}. This follows from \cite[IX.5.4]{bousfield_homotopy_1972} since the vanishing on $E_3$ implies that $E_3^{p,q}=E_r^{p,q}$ for all $q-p\geq 1$ and $r\geq 3$.
	
	The rest of this proof is devoted to the proof of the vanishing properties of $E_3$ claimed above. Recall from \cite{muro_enhanced_2020} that the $E_2$ terms (those which are defined) are 
	\begin{equation*}
	E_2^{p,q}=\left\{\begin{array}{ll}
	\hh{p+2,-q}{\Lambda(\sigma),\Lambda(\sigma)},&p>0, q\in\mathbb Z;\\
	\hz{2,-q}{\Lambda(\sigma),\Lambda(\sigma)}, & p=0, q>0.
	\end{array}\right.
	\end{equation*}
	Here, $\hz{\star,*}{\Lambda(\sigma),\Lambda(\sigma)}\subset\hc{\star,*}{\Lambda(\sigma),\Lambda(\sigma)}$ denotes the Hochschild cocycles. There is a remaining $E_2$ term, namely $E_2^{00}$, which is the pointed set of graded algebra structures with the same underlying graded vector space as $\Lambda(\sigma)$, based at $\Lambda(\sigma)$.

	The second differential \[d_2\colon E_2^{p,q}\longrightarrow E_2^{p+2,q+1},\] is defined except for $(p,q)=(0,0)$. It is given, up to sign, by the Gerstenhaber bracket with the universal Massey product,
	\[d_2=\pm[ u ,-],\]
	except for $(p,q)=(0,1)$. For $p=0$ and $q>1$, we understand that we first project the Hochschild cocycles onto the Hochschild cohomology and then apply $\pm[ u ,-]$. For $(p,q)=(0,1)$, the differential is given, up to sign, by this projection composed with the map
	\begin{equation}\label{alpha}
	\begin{split}
	\alpha\colon \hh{2,-1}{\Lambda(\sigma),\Lambda(\sigma)}&\longrightarrow \hh{4,-2}{\Lambda(\sigma),\Lambda(\sigma)},\\
	x&\;\mapsto\; x^2+ [ u ,x].
	\end{split}
	\end{equation}
	The first quadratic summand vanishes if $\characteristic k\neq 2$, since $\hh{\star,*}{\Lambda(\sigma),\Lambda(\sigma)}$ is graded commutative and $x$ has odd total degree. If $\characteristic k=2$, the quadratic summand is not $k$-linear unless $k=\mathbb F_2$, hence this differential, unlike the rest, is not a $k$-vector space morphism, but a plain abelian group morphism.
	
	Excluding $E_{2}^{00}$, where $d_2$ is not defined, the second page of the truncated spectral sequence splits into two families of cochain complexes $C^{*}_{n}$ and $D^{*}_{n}$, $n\in\mathbb Z$, 
	\begin{center}
		\setcounter{trunco}{5}
		\setcounter{rpage}{2}
		\renewcommand{\arriba}{1}
		\renewcommand{\derecha}{2}
		\renewcommand{\rescale}{1.2}			% El tamao de los cuadritos de la cuadrcula
		\renewcommand{\abajo}{1}				% Cuntos cuadritos se dibujar por abajo del eje horizontal
		\begin{tikzpicture}
		
		% Malla
		
		\draw[step=\rescale,gray,very thin] (0,{-\abajo*\rescale+\margen}) grid ({(3*\rpage -3 +\derecha)*\rescale-\margen},{(3*\rpage -3 +\arriba)*\rescale-\margen});
		
		% Parte roja
		
		\filldraw[fill=red,draw=none,opacity=0.2] (0,0) -- 
		({(3*\rpage -3 +\arriba)*\rescale-\margen},{(3*\rpage -3 +\arriba)*\rescale-\margen}) --
		(0,{(3*\rpage -3 +\arriba)*\rescale-\margen}) -- cycle;
		\draw[red!70,thick,opacity=1]  (0,0) -- ({(\trunco-\rpage-1)*\rescale},{(\trunco-\rpage-1)*\rescale})-- 
		({(3*\rpage -3 +\arriba)*\rescale-\margen},{(3*\rpage -3 +\arriba)*\rescale-\margen});

		% Puntos rojos
		
		\foreach  \x in {2,...,\rpage}
		\node[fill=red,draw=none,circle,inner sep=.5mm,opacity=1]   at ({(\x-2)*\rescale},{(\x-2)*\rescale}) {};
		
		% Parte azul
		
		\filldraw[fill=blue,draw=none,opacity=0.2] ({(2*\rpage-3)*\rescale},{-\abajo*\rescale+\margen}) -- ({(3*\rpage -3 +\derecha)*\rescale-\margen},{-\abajo*\rescale+\margen}) --
		({(3*\rpage -3 +\derecha)*\rescale-\margen},{(3*\rpage -3 +\arriba)*\rescale-\margen}) -- ({(2*\rpage-3)*\rescale},{(3*\rpage -3 +\arriba)*\rescale-\margen}) -- cycle;
		\draw[blue!70,thick]  ({(2*\rpage-3)*\rescale},{-\abajo*\rescale+\margen}) -- ({(2*\rpage-3)*\rescale},{(3*\rpage -3 +\arriba)*\rescale-\margen});
		
		% Eje horizontal
		
		\draw [->] (0,0)  -- ({(3*\rpage -3 + \derecha)*\rescale},0);
		
		% Eje vertical
		
		\draw [->] (0,-{\abajo*\rescale}) -- (0,{(3*\rpage -3 +\arriba)*\rescale});
		
		%% Complejos pares largos de la segunda página
		%\foreach  \x in {-1,...,1}
		%\draw (0,{\x*\rescale}) -- ({(3*\rpage -3 +\derecha)*\rescale-\margen},{((3*\rpage -3 +\derecha+2*\x)*\rescale-\margen)*(1/2)}) node [right] {$\scriptstyle C^{*}_{\x}$};
		
		% Complejos pares largos de la segunda página
		\foreach  \x in {-1,...,0}
		\draw [very thick] ({2*\rescale},{(\x+1)*\rescale}) -- ({(3*\rpage -3 +\derecha)*\rescale-\margen},{((3*\rpage -3 +\derecha+2*\x)*\rescale-\margen)*(1/2)}) node [right] {$\scriptstyle C^{*}_{\x}$};
		
		% El suelto
		
		\draw [very thick] (0,{\rescale}) -- ({(3*\rpage -3 +\derecha)*\rescale-\margen},{((3*\rpage -3 +\derecha+2*1)*\rescale-\margen)*(1/2)}) node [right] {$\scriptstyle C^{*}_{1}$};
		
		% Complejos pares cortos superiores de la segunda página
		\foreach  \x in {2,...,3}
		\draw [very thick] (0,{(\x)*\rescale}) -- ({(3*\rpage -3-2*(\x-2)+\arriba)*\rescale-\margen},{(3*\rpage -3 +\arriba)*\rescale-\margen}) node [above] {$\scriptstyle C^{*}_{\x}$};

		% Complejos pares cortos inferiores de la segunda página
		\foreach  \x in {-3,...,-2}
		\draw [very thick] ({-2*(\x+1)*\rescale+2*\margen},{-\abajo*\rescale+\margen}) -- ({(3*\rpage -3 +\derecha)*\rescale-\margen},{((3*\rpage -3 +\derecha+2*(\x))*\rescale-\margen)*(1/2)}) node [right] {$\scriptstyle C^{*}_{\x}$};
		
		% Complejos impares largos de la segunda página
		\foreach  \x in {0,...,1}
		\draw [very thick, dashed] ({\rescale},{(\x)*\rescale}) -- ({(3*\rpage -3 +\derecha)*\rescale-\margen},{((3*\rpage -3 +\derecha+2*\x-1)*\rescale-\margen)*(1/2)})  node [right] {$\scriptstyle D^{*}_{\x}$};
		
		% El otro suelto
%		\draw [very thick, dashed] ({3*\rescale},{2*\rescale}) -- ({(3*\rpage -3 +\derecha)*\rescale-\margen},{((3*\rpage -3 +\derecha+2-1)*\rescale-\margen)*(1/2)})  node [right] {$\scriptstyle D^{*}_{1}$};	
		
		% Complejos impares cortos inferiores de la segunda página
		\foreach  \x in {-1,...,-2}
		\draw [very thick, dashed] ({-2*(\x+.5)*\rescale+2*\margen},{-\abajo*\rescale+\margen}) -- ({(3*\rpage -3 +\derecha)*\rescale-\margen},{((3*\rpage -3 +\derecha+2*(\x-.5))*\rescale-\margen)*(1/2)}) node [right] {$\scriptstyle D^{*}_{\x}$};
		
		% Complejos impares cortos superiores de la segunda página
		\foreach  \x in {2,...,3}
		\draw [very thick, dashed] (\rescale,{(\x)*\rescale}) -- ({(3*\rpage -3-2*(\x-2.5)+\arriba)*\rescale-\margen},{(3*\rpage -3 +\arriba)*\rescale-\margen}) node [above] {$\scriptstyle D^{*}_{\x}$};
		
		\end{tikzpicture}
	\end{center}
	with the following descriptions,
	\begin{align*}
	C_{n}^{m}&=\left\{
	\begin{array}{ll}
	E_{2}^{2m,n+m},&m\geq 1,\text{ or } m=0\text{ and }n\geq 1;\\
	0,&\text{elsewhere};
	\end{array}
	\right.\\
	D_{n}^{m}&=\left\{
	\begin{array}{ll}
	E_{2}^{2m+1,n+m},&m\geq 0;\\
	0,&\text{elsewhere}.
	\end{array}
	\right.
	\end{align*}
	The differential is of course $d_{2}$ in all cases. The $E_3^{p,q}$ terms are defined for $q\geq p\geq 0$, and for $p\geq 3$ and $q\in\mathbb Z$. All of them are given by the homology of $d_2$, except for $E_3^{00}$ and $E_3^{11}$, i.e.
	\begin{align*}
	H^{m}C_{n}^{*}&=E_{3}^{2m,n+m}, &&m\geq 2,\text{ or } m\geq 0\text{ and }n\geq 1;\\
	H^{m}D_{n}^{*}&=E_{3}^{2m+1,n+m}, &&m\geq 1,\text{ or }  m\geq 0\text{ and }n\geq 2;
	\end{align*}
	
	In order to compute these groups, we consider new related families of cochain complexes, $\bar C^{*}_{n}$ and $\bar D^{*}_{n}$, $n\in\mathbb{Z}$, which agree with the former almost everywhere, and can be depicted as follows
	\begin{center}
		\setcounter{trunco}{6}
		\setcounter{rpage}{2}
		\renewcommand{\arriba}{1}
		\renewcommand{\derecha}{2}
		\renewcommand{\rescale}{1.2}			% El tamao de los cuadritos de la cuadrcula
		\renewcommand{\abajo}{1}				% Cuntos cuadritos se dibujar por abajo del eje horizontal
		\begin{tikzpicture}
		
		% Malla
		
		\draw[step=\rescale,gray,very thin] (0,{-\abajo*\rescale+\margen}) grid ({(3*\rpage -3 +\derecha)*\rescale-\margen},{(3*\rpage -3 +\arriba)*\rescale-\margen});
		
		% Eje horizontal
		
		\draw [->] (0,0)  -- ({(3*\rpage -3 + \derecha)*\rescale},0);
		
		% Eje vertical
		
		\draw [->] (0,-{\abajo*\rescale}) -- (0,{(3*\rpage -3 +\arriba)*\rescale});
		
		% Complejos pares largos de la segunda página
		\foreach  \x in {0,...,1}
		\draw  [very thick] (0,{\x*\rescale}) -- ({(3*\rpage -3 +\derecha)*\rescale-\margen},{((3*\rpage -3 +\derecha+2*\x)*\rescale-\margen)*(1/2)}) node [right] {$\scriptstyle \bar C^{*}_{\x}$};
		
		% Complejos pares cortos superiores de la segunda página
		\foreach  \x in {2,...,3}
		\draw [very thick] (0,{(\x)*\rescale}) -- ({(3*\rpage -3-2*(\x-2)+\arriba)*\rescale-\margen},{(3*\rpage -3 +\arriba)*\rescale-\margen}) node [above] {$\scriptstyle \bar C^{*}_{\x}$};
		
		% Complejos pares cortos inferiores de la segunda página
		\foreach  \x in {-3,...,-1}
		\draw  [very thick] ({-2*(\x+1)*\rescale+2*\margen},{-\abajo*\rescale+\margen}) -- ({(3*\rpage -3 +\derecha)*\rescale-\margen},{((3*\rpage -3 +\derecha+2*(\x))*\rescale-\margen)*(1/2)}) node [right] {$\scriptstyle \bar C^{*}_{\x}$};
		
		% Complejos impares largos de la segunda página
		\foreach  \x in {0,...,1}
		\draw [very thick, dashed] (\rescale,{(\x)*\rescale}) -- ({(3*\rpage -3 +\derecha)*\rescale-\margen},{((3*\rpage -3 +\derecha+2*\x-1)*\rescale-\margen)*(1/2)})  node [right] {$\scriptstyle \bar D^{*}_{\x}$};
		
		% Complejos impares cortos inferiores de la segunda página
		\foreach  \x in {-2,...,-1}
		\draw [very thick, dashed] ({-2*(\x+.5)*\rescale+2*\margen},{-\abajo*\rescale+\margen}) -- ({(3*\rpage -3 +\derecha)*\rescale-\margen},{((3*\rpage -3 +\derecha+2*(\x-.5))*\rescale-\margen)*(1/2)}) node [right] {$\scriptstyle \bar D^{*}_{\x}$};
		
		% Complejos impares cortos superiores de la segunda página
		\foreach  \x in {2,...,3}
		\draw [very thick, dashed] (\rescale,{(\x)*\rescale}) -- ({(3*\rpage -3-2*(\x-2.5)+\arriba)*\rescale-\margen},{(3*\rpage -3 +\arriba)*\rescale-\margen}) node [above] {$\scriptstyle \bar D^{*}_{\x}$};
		\end{tikzpicture}
	\end{center}
	Here, on any coordinate $(p,q)$, $p\geq 0$, $q\in\mathbb Z$, we place the Hochschild cohomology group $\hh{p+2,-q}{\Lambda(\sigma),\Lambda(\sigma)}$. Therefore, for $m\geq 0$ and $n\in\mathbb{Z}$,
	\begin{align*}
	\bar C_{n}^{m}&=\hh{2m+2,-n-m}{\Lambda(\sigma),\Lambda(\sigma)},&
	\bar D_{n}^{m}&=\hh{2m+3,-n-m}{\Lambda(\sigma),\Lambda(\sigma)},
	\end{align*}
	and $\bar C_{n}^{m}=\bar D_{n}^{m}=0$ for $m<0$. The differential is $[ u ,-]$ in all non-trivial cases. 
	
	The cup product with the universal Massey product $ u  \cdot-$ induces cochain maps
	\[f\colon \bar C_{n}^{*}\To \bar D_{n}^{*+1},\qquad
	g\colon \bar D_{n}^{*}\To \bar C_{n-1}^{*+2},
	\]
	for $n\in\mathbb{Z}$, 
	depicted below in red and blue, respectively,
	\begin{center}
		\setcounter{trunco}{6}
		\setcounter{rpage}{2}
		\renewcommand{\arriba}{1}
		\renewcommand{\derecha}{2}
		\renewcommand{\rescale}{1.2}			% El tamao de los cuadritos de la cuadrcula
		\renewcommand{\abajo}{1}				% Cuntos cuadritos se dibujar por abajo del eje horizontal
		\begin{tikzpicture}
		
		% Malla
		
		\draw[step=\rescale,gray,very thin] (0,{-\abajo*\rescale+\margen}) grid ({(3*\rpage -3 +\derecha)*\rescale-\margen},{(3*\rpage -3 +\arriba)*\rescale-\margen});
		
		% Eje horizontal
		
		\draw [->] (0,0)  -- ({(3*\rpage -3 + \derecha)*\rescale},0);
		
		% Eje vertical
		
		\draw [->] (0,-{\abajo*\rescale}) -- (0,{(3*\rpage -3 +\arriba)*\rescale});
		
		% Complejos pares largos de la segunda página
		\foreach  \x in {0,...,1}
		\draw  [very thick] (0,{\x*\rescale}) -- ({(3*\rpage -3 +\derecha)*\rescale-\margen},{((3*\rpage -3 +\derecha+2*\x)*\rescale-\margen)*(1/2)}) node [right] {$\scriptstyle \bar C^{*}_{\x}$};
		
		% Complejos pares cortos superiores de la segunda página
		\foreach  \x in {2,...,3}
		\draw [very thick] (0,{(\x)*\rescale}) -- ({(3*\rpage -3-2*(\x-2)+\arriba)*\rescale-\margen},{(3*\rpage -3 +\arriba)*\rescale-\margen}) node [above] {$\scriptstyle \bar C^{*}_{\x}$};
		
		% Complejos pares cortos inferiores de la segunda página
		\foreach  \x in {-3,...,-1}
		\draw  [very thick] ({-2*(\x+1)*\rescale+2*\margen},{-\abajo*\rescale+\margen}) -- ({(3*\rpage -3 +\derecha)*\rescale-\margen},{((3*\rpage -3 +\derecha+2*(\x))*\rescale-\margen)*(1/2)}) node [right] {$\scriptstyle \bar C^{*}_{\x}$};
		
		% Complejos impares largos de la segunda página
		\foreach  \x in {0,...,1}
		\draw [very thick, dashed] (\rescale,{(\x)*\rescale}) -- ({(3*\rpage -3 +\derecha)*\rescale-\margen},{((3*\rpage -3 +\derecha+2*\x-1)*\rescale-\margen)*(1/2)})  node [right] {$\scriptstyle \bar D^{*}_{\x}$};
		
		% Complejos impares cortos inferiores de la segunda página
		\foreach  \x in {-2,...,-1}
		\draw [very thick, dashed] ({-2*(\x+.5)*\rescale+2*\margen},{-\abajo*\rescale+\margen}) -- ({(3*\rpage -3 +\derecha)*\rescale-\margen},{((3*\rpage -3 +\derecha+2*(\x-.5))*\rescale-\margen)*(1/2)}) node [right] {$\scriptstyle \bar D^{*}_{\x}$};
		
		% Complejos impares cortos superiores de la segunda página
		\foreach  \x in {2,...,3}
		\draw [very thick, dashed] (\rescale,{(\x)*\rescale}) -- ({(3*\rpage -3-2*(\x-2.5)+\arriba)*\rescale-\margen},{(3*\rpage -3 +\arriba)*\rescale-\margen}) node [above] {$\scriptstyle \bar D^{*}_{\x}$};
		
		% Flechas rojas
		\foreach  \x in {-1,...,3}
		\foreach  \y in {0,...,1}
		\draw[red, ->, thick] ({2*\y*\rescale},{\x*\rescale}) -- ({(2*\y+3)*\rescale},{(\x+1)*\rescale});
		
		% Flechas azules
		\foreach  \x in {-1,...,3}
		\draw[blue, ->, thick] (\rescale,{\x*\rescale}) -- ({4*\rescale},{(\x+1)*\rescale});

		\end{tikzpicture}
	\end{center}
	
	By Lemma \ref{non_singularity}, $f$ and $g$ are injective, the cokernel of $f$ is concentrated in degree $\ast=-1$, and the cokernel of $g$ is concentrated in degrees $\ast=-2, -1$. Therefore, the associated long exact sequences induce the following isomorphisms in cohomology for $m\geq 1$ and $n\in\mathbb{Z}$,
	\[f_{*}\colon H^{m}\bar C^{*}_{n}\cong H^{m+1}\bar D^{*}_{n},\qquad
	g_{*}\colon H^{m}\bar D^{*}_{n}\cong H^{m+2}\bar C^{*}_{n}.
	\]
	By Proposition \ref{nulo} and Remark \ref{nulo2}, $f$ and $g$ are null-homotopic, so, for $m\geq 1$ and $n\in\mathbb{Z}$,
	\[H^{m}\bar C^{*}_{n}=0=H^{m}\bar D^{*}_{n}.\]
	
	Clearly, $D^{*}_{n}=\bar D^{*}_{n}$, hence
	\[E_3^{2m+1,n+m}=H^mD^{*}_{n}=0,\qquad m\geq 1, n\in\mathbb{Z}.\]
	For $n\leq 0$, $C^*_n$ is the naive truncation of $\bar C^*_n$ at $*\geq 1$, therefore
	\[E_3^{2m,n+m}=H^mC^{*}_{n}=H^m\bar C^{*}_{n}=0,\qquad m\geq 2\text{ and }n\leq 0.\]
	For $n\geq 2$, there is an obvious surjective cochain map $C^*_n\twoheadrightarrow\bar C^*_n$ which is the identity in $*>0$ and the natural projection of Hochschild cocycles onto Hochschild cochains in $*=0$, so
	\[E_3^{2m,n+m}=H^mC^{*}_{n}=H^m\bar C^{*}_{n}=0,\qquad m\geq 1\text{ and }n\geq 2.\]
	For $n=1$ and $\characteristic k\neq 2$, we also have a surjective map $C^*_1\twoheadrightarrow\bar C^*_1$ for the same reason as above, and
	\[E_3^{2m,m+1}=H^mC^{*}_{1}=H^m\bar C^{*}_{1}=0,\qquad m\geq 1.\]
	This completes the proof in case $\characteristic k\neq 2$. The problem in $\characteristic k = 2$ is that the differential $C_1^0\to C_1^1$ depends on the quadratic map \eqref{alpha} while $\bar C_1^0\to \bar C_1^1$ is just given by $[u,-]$, so there is no obvious map $C^*_1\twoheadrightarrow\bar C^*_1$.
	
	The last equation also holds for $\characteristic k= 2$, although it is more complicated to check. It suffices to construct a chain map $C^*_1\twoheadrightarrow\bar D^{*+1}_1$ given by $ u  \cdot-$ for $*>0$, which is an isomorphism by Lemma \ref{non_singularity}, and which is surjective for $*=0$, since then
	\[E_3^{2m,m+1}=H^mC^{*}_{1}=H^{m+1}\bar D^{*}_{1}=0,\qquad m\geq 1.\]
	we define $C_1^0\rightarrow D^1_1$ as the following composite
	\[\hz{2,-1}{\Lambda(\sigma),\Lambda(\sigma)}\twoheadrightarrow\hh{2,-1}{\Lambda(\sigma),\Lambda(\sigma)}\stackrel{\beta}{\longrightarrow}\hh{5,-2}{\Lambda(\sigma),\Lambda(\sigma)}\]
	where 
	\[\beta(x)= u  \cdot x+\{\delta\} \cdot x^2.\] 
	We must show that this $\beta$ is surjective and completes the definition of a chain map $C^*_1\twoheadrightarrow\bar D^{*+1}_1$. Let us start with the second property. It suffices to check that
	\[ u  \cdot\alpha(x)=[ u ,\beta(x)]\]
	for any $x\in \hh{2,-1}{\Lambda(\sigma),\Lambda(\sigma)}$, where $\alpha$ is the morphism in \eqref{alpha} which defines the differential $d_2\colon E_2^{0,1}\rightarrow E_2^{2,2}$. This follows from Proposition \ref{formulilla}, $\characteristic k=2$, and the laws of a Gerstenhaber algebra, since we know that $[ u , u  \cdot x]= u  \cdot [ u ,x]$, and 
	\begin{align*}
	[ u ,\{\delta\} \cdot x^2]={}&[ u ,\{\delta\}] \cdot x^2
	+\{\delta\} \cdot [ u ,x] \cdot x
	+\{\delta\} \cdot x \cdot [ u ,x]\\
	={}& u  \cdot x^2
	+\{\delta\} \cdot [ u ,x] \cdot x
	+\{\delta\} \cdot [ u ,x] \cdot x\\
	={}& u  \cdot x^2.
	\end{align*}
	We finish by showing the surjectivity of $\beta$. Actually, we will see that it is bijective. For this, we construct morphisms $\gamma$ and $\lambda$ fitting in the following commutative diagram, 
	\begin{center}
		\begin{tikzcd}
		\hh{2,-1}{\Lambda(\sigma),\Lambda(\sigma)}\arrow[d,"\beta"]
		\arrow[dd,bend left=80, " u ^3 \cdot-","\cong"']\\
		\hh{5,-2}{\Lambda(\sigma),\Lambda(\sigma)}\arrow[d,"\gamma"]
		\arrow[dd,bend right=80, " u ^6 \cdot-"',"\cong"]\\
		\hh{11,-4}{\Lambda(\sigma),\Lambda(\sigma)}\arrow[d,"\lambda"]\\
		\hh{23,-8}{\Lambda(\sigma),\Lambda(\sigma)}
		\end{tikzcd}
	\end{center}
	where the curved arrows are isomorphisms by Proposition \ref{non_singularity}. The morphisms $\gamma$ and $\lambda$ are defined as
	\begin{align*}
	\gamma(x)&= u ^2 \cdot x+\{\delta\} \cdot x^2,\\
	\lambda(x)&= u ^4 \cdot x+\{\delta\} \cdot x^2.
	\end{align*}
	Commutativity follows from
	\begin{align*}
	\gamma\beta(x)={}&\gamma( u  \cdot x+\{\delta\} \cdot x^2)\\
	={}& u ^3 \cdot x+ u ^2 \cdot \{\delta\} \cdot x^2+\{\delta\} \cdot( u  \cdot x+\{\delta\} \cdot x^2)^2\\
	={}& u ^3 \cdot x+ u ^2 \cdot \{\delta\} \cdot x^2+\{\delta\} \cdot u ^2 \cdot x^2\\
	={}& u ^3 \cdot x,\\
	\lambda\gamma(x)={}&\lambda( u ^2 \cdot x+\{\delta\} \cdot x^2)\\
	={}& u ^6 \cdot x+ u ^4 \cdot\{\delta\} \cdot x^2+\{\delta\} \cdot( u ^2 \cdot x+\{\delta\} \cdot x^2)^2\\
	={}& u ^6 \cdot x+ u ^4 \cdot\{\delta\} \cdot x^2+\{\delta\} \cdot u ^4 \cdot x^2\\
	={}& u ^6 \cdot x.
	\end{align*}
	Here we use Proposition \ref{vaa0}, the commutativity of the cup product, and that $\characteristic k=2$.
\end{proof}

The group $\aut(\Lambda(\sigma))$ also acts on the right of the graded algebra $\hh{\star,*}{\Lambda,\Lambda(\sigma)}$ by conjugation since $\Lambda$ is the degree $0$ part of $\Lambda(\sigma)$, so any automorphism of the latter (co)restricts to an automorphism of the former. The following corollary is a consequence of the previous theorem and Proposition \ref{bijection}.

\begin{corollary}\label{restricted_suffices}
	There is an $\aut(\Lambda(\sigma))$-equivariant bijection between $\ets{\C T,\Sigma}$ and the set of edge units in $\hh{3,-1}{\Lambda,\Lambda(\sigma)}$.
\end{corollary}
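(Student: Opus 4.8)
The plan is to obtain Corollary \ref{restricted_suffices} by composing two bijections already established. By Theorem \ref{classification}, the universal Massey product construction furnishes an $\aut(\Lambda(\sigma))$-equivariant bijection between $\ets{\C T,\Sigma}$ and the set $U$ of edge units $u\in\hh{3,-1}{\Lambda(\sigma),\Lambda(\sigma)}$ satisfying $\gsquare(u)=0$. By Proposition \ref{bijection}, restricting the map $i^*\colon\hh{3,-1}{\Lambda(\sigma),\Lambda(\sigma)}\to\hh{3,-1}{\Lambda,\Lambda(\sigma)}$ to $U$ gives a bijection onto the set of edge units in $\hh{3,-1}{\Lambda,\Lambda(\sigma)}$, with no further condition imposed on the target. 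Composing these two bijections yields the asserted bijection between $\ets{\C T,\Sigma}$ and the edge units in $\hh{3,-1}{\Lambda,\Lambda(\sigma)}$.

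What remains is to see that the composite is $\aut(\Lambda(\sigma))$-equivariant. Since the bijection of Theorem \ref{classification} is already equivariant, it suffices to check that $i^*$ intertwines the conjugation actions of $\aut(\Lambda(\sigma))$ on its source and target. Fix $g\in\aut(\Lambda(\sigma))$ and let $g_0\colon\Lambda\to\Lambda$ be its corestriction to the degree $0$ part, which is an algebra automorphism; recall that the action is $x^g=g^*(g^{-1})_*(x)$, and likewise for $g_0$ on $\hh{\star,*}{\Lambda,\Lambda(\sigma)}$. The inclusion $i\colon\Lambda\subset\Lambda(\sigma)$ satisfies $i\circ g_0=g\circ i$ as graded functors, so the bivariant functoriality of Hochschild cohomology recalled in Section \ref{principio} gives $g_0^*\circ i^*=i^*\circ g^*$; the analogous identity for the pushforwards on coefficients holds by the same token. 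Hence $i^*(x^g)=(i^*x)^g$ for every $x$, which is exactly the equivariance we need.

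I do not expect any genuine difficulty here: once Theorem \ref{classification} and Proposition \ref{bijection} are in hand, the only point requiring attention is the equivariance of $i^*$, and that is a purely formal consequence of the functoriality of Hochschild cohomology together with the compatibility $i\circ g_0=g\circ i$. With this, the proof is complete.
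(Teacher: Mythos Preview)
Your proof is correct and follows the same route as the paper, which derives the corollary in one line from Theorem \ref{classification} and Proposition \ref{bijection}. One small clarification: the action of $g\in\aut(\Lambda(\sigma))$ on $\hh{\star,*}{\Lambda,\Lambda(\sigma)}$ is $x^g=g_0^*(g^{-1})_*(x)$, with the coefficient pushforward using the full $g^{-1}$ on $\Lambda(\sigma)$ rather than $g_0^{-1}$; with this reading, your equivariance check via $i\circ g_0=g\circ i$ and the identity $F^*\tau_*=\tau(F,F)_*F^*$ from Section~\ref{principio} goes through verbatim.
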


\begin{proposition}
	The set $\etc{\C T,\Sigma}$ is non-empty if and only if $\Omega^3(\Lambda)\cong {}_{\sigma^{-1}}\Lambda_1$ in $\modulesst{\Lambda^\env}$. Moreover, in that case it is a singleton.
\end{proposition}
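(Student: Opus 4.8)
The plan is to reduce everything to the combinatorics of edge units already developed. Composing the bijection of Theorem~\ref{gauge} with the $\aut(\Lambda(\sigma))$-equivariant bijection of Corollary~\ref{restricted_suffices} gives an identification $\etc{\C T,\Sigma}\cong U/\aut(\Lambda(\sigma))$, where $U\subseteq\hh{3,-1}{\Lambda,\Lambda(\sigma)}$ denotes the set of edge units. So the proposition splits into two facts: (a) $U$ is non-empty if and only if $\Omega^3(\Lambda)\cong{}_{\sigma^{-1}}\Lambda_1$ in $\modulesst{\Lambda^\env}$; and (b) when $U\neq\emptyset$, the group $\aut(\Lambda(\sigma))$ acts transitively on $U$.

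For (a), I would use that the comparison map $\hh{3,-1}{\Lambda,\Lambda(\sigma)}\to\htate{3,-1}{\Lambda,\Lambda(\sigma)}$ is an isomorphism in positive Hochschild degree, so by Proposition~\ref{edge_characterization} the set $U$ is exactly the set of units of the bigraded ring $\htate{\star,*}{\Lambda,\Lambda(\sigma)}$ living in bidegree $(3,-1)$. By Proposition~\ref{units}, these are precisely the maps $\Omega^3(\Lambda)\to{}_{\sigma^{-1}}\Lambda_1$ that are isomorphisms in $\bimpst{\Lambda}$. Hence $U\neq\emptyset$ iff $\Omega^3(\Lambda)\cong{}_{\sigma^{-1}}\Lambda_1$ in $\bimpst{\Lambda}$; since $\bimpst{\Lambda}$ is a full triangulated subcategory of $\modulesst{\Lambda^\env}$, both objects lie in $\bimp{\Lambda}$, and the syzygy $\Omega^3$ taken in $\bimpst{\Lambda}$ agrees with $\Omega^3_{\Lambda^\env}(\Lambda)$, this is the same as an isomorphism in $\modulesst{\Lambda^\env}$, which settles (a).

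For (b), the crucial observation is that $U$, being the set of units of $\htate{\star,*}{\Lambda,\Lambda(\sigma)}$ concentrated in the single invertible bidegree $(3,-1)$, is a torsor under the unit group of the degree-zero subring $\htate{0,0}{\Lambda,\Lambda(\sigma)}=\htate{0}{\Lambda,\Lambda}$: for $u,u'\in U$ the ratio $u^{-1}u'$ is a unit in bidegree $(0,0)$, and conversely $uv\in U$ for any such $v$ (note this ring is commutative, being a quotient of the center $Z(\Lambda)=\hh{0}{\Lambda,\Lambda}$). It therefore suffices to produce, for each unit of $\htate{0}{\Lambda,\Lambda}$, a graded algebra automorphism of $\Lambda(\sigma)$ acting on $U$ through it. Given a central unit $c\in Z(\Lambda)^\times$, the recipe "$g_c$ is the identity on $\Lambda$ and $g_c(\imath)=c\imath$" defines a graded algebra automorphism of $\Lambda(\sigma)=\Lambda\langle\imath^{\pm1}\rangle/(\imath x-\sigma(x)\imath)_{x\in\Lambda}$: the one relation to check, $c\,\imath\,x=\sigma(x)\,c\,\imath$, holds because $c$ is central. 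Identifying the degree $-1$ part of $\Lambda(\sigma)$ with ${}_{\sigma^{-1}}\Lambda_1$ via Remark~\ref{signos} and unwinding the conjugation action (naturality of the comparison map lets one compute in $\htate{\star,*}{\Lambda,\Lambda(\sigma)}$), one checks that $g_c$ acts on $\hh{3,-1}{\Lambda,\Lambda(\sigma)}=\hh{3}{\Lambda,{}_{\sigma^{-1}}\Lambda_1}$ as post-composition with multiplication by $\sigma^{-1}(c)$ on ${}_{\sigma^{-1}}\Lambda_1$, i.e.\ as multiplication by the class of $\sigma^{-1}(c)$ in $\htate{0}{\Lambda,\Lambda}$ (up to replacing $c$ by $c^{-1}$, which is immaterial). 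By Proposition~\ref{surjective_units} the comparison map $Z(\Lambda)^\times\to\htate{0}{\Lambda,\Lambda}^\times$ is surjective, and $c\mapsto\sigma^{-1}(c)$ permutes $Z(\Lambda)^\times$, so already the automorphisms $g_c$ act transitively on the torsor $U$. Hence $U/\aut(\Lambda(\sigma))$ is a singleton whenever $U\neq\emptyset$, which together with (a) proves the proposition.

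I expect the main obstacle to be step (b), and within it the explicit computation of the $g_c$-action on the Tate--Hochschild class: one must keep careful track of the twist by $\sigma$ coming from the identification in Remark~\ref{signos} and of the precise direction of the conjugation action, so as to be sure that the resulting multiplier is the class of a genuine central unit of $\Lambda$ and that Proposition~\ref{surjective_units} can be invoked. Everything else is formal bookkeeping with the bijections of Theorem~\ref{gauge} and Corollary~\ref{restricted_suffices} and with the ring structure of $\htate{\star,*}{\Lambda,\Lambda(\sigma)}$.
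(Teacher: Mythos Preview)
Your proof is correct and follows essentially the same strategy as the paper's: reduce $\etc{\C T,\Sigma}$ to $U/\aut(\Lambda(\sigma))$ via Theorem~\ref{gauge} and Corollary~\ref{restricted_suffices}, characterize non-emptiness of $U$ through Propositions~\ref{edge_characterization} and~\ref{units}, and establish transitivity by exhibiting, for each central unit $c\in Z(\Lambda)^\times$, a graded algebra automorphism of $\Lambda(\sigma)$ that is the identity in degree~$0$ and acts on $\htate{3,-1}{\Lambda,\Lambda(\sigma)}$ via cup product with the image of (some bijective reparametrization of) $c$ under the surjection of Proposition~\ref{surjective_units}. The paper describes these automorphisms as ``right multiplication by $x^n$ in degree $n$'' while you present them as $g_c(\imath)=c\imath$; your formulation has the virtue of making the algebra-automorphism check transparent, and your acknowledged imprecision in the exact multiplier is indeed immaterial since any bijection of $Z(\Lambda)^\times$ suffices for transitivity.
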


\begin{proof}
	By Theorem \ref{gauge}, $\etc{\C T,\Sigma}$ is non-empty if and only if $\ets{\C T,\Sigma}$ is. Therefore, the first part of the statement follows from the previous corollary and Propositions \ref{units} and \ref{edge_characterization}. 
	
	By those previous results, in order to prove the second part, we must check that $\htate{3,-1}{\Lambda,\Lambda(\sigma)}^\times/\aut(\Lambda(\sigma))$ is a singleton. Let $Z(\Lambda)$ denote the center of $\Lambda$. There is a group morphism $g\colon \hh{0,0}{\Lambda,\Lambda(\sigma)}^\times=\hh{0,0}{\Lambda,\Lambda}^\times=Z(\Lambda)^\times\rightarrow\aut(\Lambda(\sigma))$ sending $x\in Z(\Lambda)^\times$ to the automorphism $g(x)\colon\Lambda(\sigma)\rightarrow \Lambda(\sigma)$ defined on each degree $n\in\mathbb{Z}$ by right multiplication by $x^n$. This morphism is obviously injective, so we can regard $\hh{0,0}{\Lambda,\Lambda(\sigma)}^\times$ as a subgroup of $\aut(\Lambda(\sigma))$. Since any $g(x)$ is the identity in degree $0$, the right action of $\hh{0,0}{\Lambda,\Lambda(\sigma)}^\times$ on $\htate{3,-1}{\Lambda,\Lambda(\sigma)}^\times$ is given by the comparison morphism  $\hh{0,0}{\Lambda,\Lambda(\sigma)}^\times\twoheadrightarrow \htate{0,0}{\Lambda,\Lambda(\sigma)}^\times$, which is surjective by Proposition \ref{surjective_units}, and left multiplication by the inverse. The quotient \[\htate{3,-1}{\Lambda,\Lambda(\sigma)}^\times/\htate{0,0}{\Lambda,\Lambda(\sigma)}^\times\] 
	of the `left multiplication by the inverse' action
	is a singleton since, given $x,y\in \htate{3,-1}{\Lambda,\Lambda(\sigma)}^\times$, $xy^{-1}\in \htate{0,0}{\Lambda,\Lambda(\sigma)}^\times$. Therefore, the quotient by the larger group $\htate{3,-1}{\Lambda,\Lambda(\sigma)}^\times/\aut(\Lambda(\sigma))$ is also a singleton.
\end{proof}

\begin{definition}
	We say that $\C T$ has an enhanced triangulated structure if $(\C T,\Sigma)$ does for some automorphism $\Sigma\colon\C T\rightarrow\C T$.
\end{definition}

Recall from \cite[Proposition 3.8]{bolla_isomorphisms_1984} that the \emph{Picard group} $\pic(\Lambda)$ of invertible $\Lambda$-bimodules is isomorphic to the outer automorphism group $\out(\Lambda)$ via $\out(\Lambda)\rightarrow\pic(\Lambda)\colon [\sigma]\mapsto [{}_{\sigma}\Lambda_1]$. Moreover, $\pic(\Lambda)$ is also the group of natural isomorphism classes of self-equivalences of $\modules{\Lambda}$, and $[\sigma]$ corresponds to the restriction of scalars $(\sigma^{-1})^*\colon\modules{\Lambda}\rightarrow\modules{\Lambda}$ along $\sigma^{-1}\colon \Lambda\rightarrow\Lambda$, which is naturally isomorphic to $-\otimes_\Lambda{}_{\sigma}\Lambda_1$. The advantage of the equivalence $(\sigma^{-1})^*$ is that it is an automorphism.

\begin{corollary}\label{main_theorem}
	The finite category $\C T\simeq\proj{\Lambda}$ has an enhanced triangulated structure if and only if $\Omega^3(\Lambda)$ is isomorphic in $\modulesst{\Lambda^\env}$ to an invertible $\Lambda$-bimodule. Up to natural isomorphism, the possible suspension functors are the restrictions of scalars $\sigma^*\colon\proj{\Lambda}\rightarrow\proj{\Lambda}$, where $\sigma$ runs over a set of representatives of elements in $\out(\Lambda)$ such that $\Omega^3(\Lambda)\cong{}_{\sigma^{-1}}\Lambda_1$ in $\modulesst{\Lambda^\env}$. Once the suspension functor $\Sigma$ is fixed, $\etc{\C T,\Sigma}$ is a singleton.
\end{corollary}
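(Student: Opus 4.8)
The plan is to deduce this corollary by assembling the immediately preceding proposition with the recalled isomorphism $\pic(\Lambda)\cong\out(\Lambda)$. By definition, $\C T$ admits an enhanced triangulated structure if and only if $\etc{\C T,\Sigma}\neq\emptyset$ for \emph{some} automorphism $\Sigma\colon\C T\rightarrow\C T$. Fix such a $\Sigma$ and an automorphism $\sigma\colon\Lambda\rightarrow\Lambda$ induced by it as in Section \ref{weakly_stable}. The previous proposition says that $\etc{\C T,\Sigma}\neq\emptyset$ exactly when $\Omega^3(\Lambda)\cong{}_{\sigma^{-1}}\Lambda_1$ in $\modulesst{\Lambda^\env}$, and that in that case $\etc{\C T,\Sigma}$ is a singleton; this last assertion is precisely the final sentence of the corollary, so only the first two sentences need further work.

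For the first equivalence I would reason as follows. If some $\Sigma$ works then $\Omega^3(\Lambda)$ is stably isomorphic to ${}_{\sigma^{-1}}\Lambda_1$, an invertible $\Lambda$-bimodule; conversely, if $\Omega^3(\Lambda)$ is stably isomorphic to an invertible $\Lambda$-bimodule $I$, then by $\pic(\Lambda)\cong\out(\Lambda)$ we may write $I\cong{}_{\tau}\Lambda_1$ for some $\tau\in\aut(\Lambda)$, and taking $\sigma=\tau^{-1}$ together with the self-equivalence $\Sigma$ of $\C T$ corresponding to $\sigma$ makes $\etc{\C T,\Sigma}$ non-empty by the previous proposition. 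This yields the first sentence.

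For the description of the suspension functors I would use the last recalled paragraph: every self-equivalence of $\modules{\Lambda}$, hence of $\proj{\Lambda}\simeq\C T$, is naturally isomorphic, via its class in $\pic(\Lambda)$, to a restriction of scalars, and restriction of scalars along an inner automorphism is naturally isomorphic to the identity; therefore, up to natural isomorphism, a suspension functor on $\C T$ is of the form $\sigma^*$ for a well-defined $[\sigma]\in\out(\Lambda)$, namely the outer class of the automorphism of $\Lambda$ induced by $\Sigma$. Intersecting this with the criterion above, the admissible $\Sigma$ are, up to natural isomorphism, exactly the $\sigma^*$ with $\sigma$ running over representatives of those classes in $\out(\Lambda)$ satisfying $\Omega^3(\Lambda)\cong{}_{\sigma^{-1}}\Lambda_1$ in $\modulesst{\Lambda^\env}$, as claimed.

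The one point I would treat with care, and which I expect to be the only real obstacle, is the conventions bookkeeping: matching the automorphism $\sigma$ induced by $\Sigma$ in Section \ref{weakly_stable}, the honest automorphism $\sigma^*$ of $\proj{\Lambda}$, and the invertible bimodule assigned to it by Bolla's theorem, so that the condition comes out as $\Omega^3(\Lambda)\cong{}_{\sigma^{-1}}\Lambda_1$ rather than its tensor inverse or its image under a power of $\Omega$. This is a routine unwinding of the definitions already in place and introduces no new ideas.
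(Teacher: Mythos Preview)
Your proposal is correct and matches the paper's approach exactly: the corollary is stated without proof, as an immediate consequence of the preceding proposition together with the recalled isomorphism $\out(\Lambda)\cong\pic(\Lambda)$ and the identification of self-equivalences of $\proj{\Lambda}$ with restrictions of scalars. Your unwinding of the three assertions, and your flagging of the conventions bookkeeping as the only point requiring care, are precisely what the paper leaves implicit.
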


\begin{corollary}
	If $\Lambda$ is separable, then $\C T\simeq\proj{\Lambda}$ has an enhanced triangulated structure.
	Up to natural isomorphism, the possible suspension functors are the restrictions of scalars $\sigma^*\colon\proj{\Lambda}\rightarrow\proj{\Lambda}$, where $\sigma$ runs over a set of representatives of elements in $\out(\Lambda)$. Once the suspension functor $\Sigma$ is fixed, $\etc{\C T,\Sigma}$ is a singleton.
\end{corollary}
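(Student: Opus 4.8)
The plan is to obtain this as an immediate specialization of Corollary \ref{main_theorem}, the point being that separability forces the relevant stable bimodule category to be trivial. First I would recall the classical characterization of separability: a finite-dimensional $k$-algebra $\Lambda$ is separable if and only if its enveloping algebra $\Lambda^\env$ is semisimple. In that case $\Lambda$ is itself semisimple — the multiplication map $\Lambda^\env\to\Lambda$ splits as a map of $\Lambda$-bimodules, exhibiting $\Lambda$ as a projective $\Lambda^\env$-module, and projectivity of $\Lambda$ over $\Lambda^\env$ is well known to force $\Lambda$ semisimple — hence $\Lambda$ is Frobenius. Thus the running hypotheses on $\Lambda$ are met and Corollary \ref{main_theorem} is applicable.

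Next I would observe that, since $\Lambda^\env$ is semisimple, every $\Lambda^\env$-module is projective and injective, so the stable module category $\modulesst{\Lambda^\env}$ is the zero category. In particular $\Omega^3_{\Lambda^\env}(\Lambda)\cong 0$ there, and for every algebra automorphism $\sigma\colon\Lambda\rightarrow\Lambda$ the twisted bimodule ${}_{\sigma^{-1}}\Lambda_1$ is likewise zero in $\modulesst{\Lambda^\env}$. Hence $\Omega^3(\Lambda)$ is (trivially) isomorphic in $\modulesst{\Lambda^\env}$ to the invertible $\Lambda$-bimodule $\Lambda={}_{\mathrm{id}}\Lambda_1$, and, crucially, the set of classes $[\sigma]\in\out(\Lambda)$ satisfying $\Omega^3(\Lambda)\cong{}_{\sigma^{-1}}\Lambda_1$ in $\modulesst{\Lambda^\env}$ is the whole group $\out(\Lambda)$.

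Finally I would feed this into Corollary \ref{main_theorem}. It yields at once that $\C T\simeq\proj{\Lambda}$ has an enhanced triangulated structure (as $\Omega^3(\Lambda)$ is stably isomorphic to an invertible bimodule), that the possible suspension functors up to natural isomorphism are exactly the restrictions of scalars $\sigma^*$ with $[\sigma]$ ranging over the set just identified — namely all of $\out(\Lambda)$ — and that $\etc{\C T,\Sigma}$ is a singleton once $\Sigma$ is fixed. I do not expect any genuine obstacle: the statement is a pure corollary of the main theorem, and the only points needing care are the classical chain of implications ``separable $\Rightarrow$ $\Lambda^\env$ semisimple $\Rightarrow$ $\modulesst{\Lambda^\env}=0$ and $\Lambda$ Frobenius'' and the (here trivial) fact that twisting a bimodule by an automorphism cannot destroy projectivity over $\Lambda^\env$, so that every ${}_{\sigma^{-1}}\Lambda_1$ vanishes stably.
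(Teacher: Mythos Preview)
Your proposal is correct and matches the paper's approach: the paper states this corollary without proof, treating it as an immediate consequence of Corollary~\ref{main_theorem}, and your argument supplies exactly the expected details (separability makes $\Lambda^\env$ semisimple, hence $\modulesst{\Lambda^\env}=0$, so the syzygy condition in Corollary~\ref{main_theorem} is vacuously satisfied for every $[\sigma]\in\out(\Lambda)$).
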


The case analyzed in this corollary is particularly simple, $\Lambda$ is semisimple, $\C T\simeq D^c(\Lambda(\sigma))$, and all exact triangles split here.

Recall that the algebra $\Lambda$ is \emph{connected} if it cannot be decomposed as a product  $\Lambda\cong\Lambda_{1}\times\Lambda_{2}$, with $\Lambda_{i}$ a non-trivial algebra, $i=1,2$.

\begin{proposition}\label{irreducible}
	Suppose $\Lambda$ is connected and not separable. Given any $\Lambda$-bimodule $M$, denote by $\Omega(M)$ the syzygy obtained ascovercover the kernel of a projective cover,
	\[\Omega(M)\hookrightarrow P\twoheadrightarrow M.\]
	Then, $\C T=\proj{\Lambda}$ has an enhanced triangulated structure if and only if $\Omega^3(\Lambda)$ is an invertible $\Lambda$-bimodule. In that case, the suspension functor is necessarily $\Sigma=-\otimes_{\Lambda}\Omega^{3}(\Lambda)^{-1}$, where $\Omega^{3}(\Lambda)^{-1}$ is an inverse of $\Omega^3(\Lambda)$ in $\pic(\Lambda)$. Moreover, $\etc{\C T,\Sigma}$ is a singleton.
\end{proposition}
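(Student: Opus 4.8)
The plan is to obtain the proposition from Corollary~\ref{main_theorem} by upgrading the stable isomorphisms that occur there to honest isomorphisms of $\Lambda$-bimodules. Two homological facts make this possible, both exploiting that, $\Lambda$ being finite-dimensional, the enveloping algebra $\Lambda^\env$ is a finite-dimensional and hence artinian algebra which moreover is self-injective, since it is Frobenius \cite[Lemmas 3.1 and 3.2]{bergh_tate-hochschild_2013}. The first fact is that for any $\Lambda^\env$-module $M$, the syzygy obtained from a projective cover, $\Omega(M)\hookrightarrow P\twoheadrightarrow M$, has no nonzero projective direct summand: if $Q\ne 0$ were such a summand, then $Q$ would be injective and so would split off the monomorphism $Q\hookrightarrow\Omega(M)\hookrightarrow P$, making $Q$ a direct summand of $P$; but $\Omega(M)$ is a superfluous submodule of $P$, hence contained in $\rad P$, which would force $Q\subseteq\rad Q$, impossible. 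The second fact is that two $\Lambda^\env$-modules without projective direct summands which are isomorphic in $\modulesst{\Lambda^\env}$ are already isomorphic in $\modules{\Lambda^\env}$: from a stable inverse pair $f,g$ one builds, after absorbing the projective module through which $\id{M}-gf$ factors, a split monomorphism from $M$ into the direct sum of $N$ with a projective, and symmetrically; the Krull--Remak--Schmidt theorem in $\modules{\Lambda^\env}$---available because all modules in sight are finite-dimensional---then shows $M\cong N$.

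Next I would check that, under our hypotheses, \emph{every invertible $\Lambda$-bimodule is indecomposable and non-projective as a $\Lambda^\env$-module}. Connectedness of $\Lambda$ means $\endomorphism_{\Lambda^\env}(\Lambda)=Z(\Lambda)$ has no nontrivial idempotents, so $\Lambda$ is indecomposable over $\Lambda^\env$; non-separability of $\Lambda$ means exactly that $\Lambda$ is not projective over $\Lambda^\env$. Since tensoring with an invertible bimodule is an autoequivalence of $\modules{\Lambda^\env}$, it preserves indecomposability and projectivity, and applying it to $\Lambda$ gives the claim. Now, the third syzygy $\Omega^3(\Lambda)$ in the statement is the kernel of the third projective cover in the minimal projective $\Lambda^\env$-resolution of $\Lambda$; a projective cover is in particular a surjection with projective-injective source, so this bimodule is one of the valid representatives of the stable syzygy $\Omega^3(\Lambda)$ appearing in Corollary~\ref{main_theorem}, and by the first fact it has no projective direct summand. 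Combining this with the second fact and the previous observation: $\Omega^3(\Lambda)$ is isomorphic in $\modulesst{\Lambda^\env}$ to an invertible $\Lambda$-bimodule if and only if $\Omega^3(\Lambda)$ is \emph{itself} an invertible $\Lambda$-bimodule. Together with Corollary~\ref{main_theorem} this proves the first assertion of the proposition.

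Suppose now $\Omega^3(\Lambda)$ is invertible. By Corollary~\ref{main_theorem} the possible suspension functors are the restrictions of scalars $\sigma^*$ with $[\sigma]$ ranging over the classes in $\out(\Lambda)$ for which $\Omega^3(\Lambda)\cong{}_{\sigma^{-1}}\Lambda_1$ in $\modulesst{\Lambda^\env}$. Both $\Omega^3(\Lambda)$ and ${}_{\sigma^{-1}}\Lambda_1$ are invertible, hence have no projective summands, so by the second fact this stable isomorphism is an honest isomorphism of bimodules, and through the identification $\pic(\Lambda)\cong\out(\Lambda)$ recalled just before Corollary~\ref{main_theorem} it pins down the class $[\sigma]$ uniquely. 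Thus $\Sigma$ is unique up to natural isomorphism, and translating the formula of Corollary~\ref{main_theorem} into the present situation exhibits it as $\Sigma\cong-\otimes_\Lambda\Omega^3(\Lambda)^{-1}$ with $\Omega^3(\Lambda)^{-1}$ an inverse of $\Omega^3(\Lambda)$ in $\pic(\Lambda)$. Finally, with this $\Sigma$ the hypothesis of Corollary~\ref{main_theorem} is met, so its last assertion yields that $\etc{\C T,\Sigma}$ is a singleton.

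The substantive input---and the only place where both ``connected'' and ``not separable'' are genuinely needed---is the claim that invertible bimodules are indecomposable and non-projective; it is precisely this which lets stable and honest isomorphism coincide for the bimodules that arise. Everything else is routine homological bookkeeping. The only point requiring a moment's care is the observation that the minimal syzygy of the statement and the syzygy implicit in Corollary~\ref{main_theorem} define the same object of $\modulesst{\Lambda^\env}$; I do not expect any real obstacle beyond this.
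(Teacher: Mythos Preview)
Your proposal is correct and follows essentially the same route as the paper: both arguments reduce to Corollary~\ref{main_theorem} by showing that, under the hypotheses, stable isomorphisms between $\Omega^3(\Lambda)$ and an invertible bimodule upgrade to honest isomorphisms via Krull--Remak--Schmidt, using that invertible bimodules are indecomposable (connectedness) and non-projective (non-separability). The only cosmetic difference is that the paper argues $\Omega^3(\Lambda)$ is \emph{indecomposable} (as a minimal syzygy of the indecomposable bimodule $\Lambda$, using that $\Omega$ is an equivalence on the stable category), whereas you argue the slightly weaker but equally sufficient fact that it has no projective summand; both feed into the same KRS cancellation.
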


\begin{proof}
	The $\Lambda$-bimodule $\Lambda$ is indecomposable, since $\Lambda$ is connected as an algebra, hence $\Omega^3(\Lambda)$ too, and also any invertible $\Lambda$-bimodule ${}_\sigma\Lambda_{1}$. Moreover, ${}_\sigma\Lambda_{1}$ cannot be projective. Otherwise, it would be a direct summand of $\Lambda\otimes\Lambda$, and then $\Lambda$ would be a direct summand of ${}_{\sigma^{-1}}\Lambda\otimes\Lambda\cong\Lambda\otimes\Lambda$, hence separable. The third syzygy $\Omega^3(\Lambda)$ is not projective either since that would also imply that $\Lambda$ would be a projective bimodule.
	
	Two $\Lambda$-bimodules $M$ and $N$ are isomorphic in $\modulesst{\Lambda^\env}$ if and only if there are projective $\Lambda$-bimodules $P$ and $Q$ such that $M\oplus P\cong N\oplus Q$ in $\modules{\Lambda^\env}$. Hence, $M=\Omega^3(\Lambda)$ is isomorphic to some $N={}_\sigma\Lambda_{1}$ in $\modulesst{\Lambda^\env}$ if and only if they are isomorphic in $\modules{\Lambda^\env}$, because each of them is the only non-projective indecomposable factor on each side of an isomorphism $M\oplus P\cong N\oplus Q$ as above. Now, this proposition follows from Corollary \ref{main_theorem}.
\end{proof}

Recall that the Nakayama algebra $N_m^n$ is the quotient of the path algebra of the oriented cycle of length $m\geq 1$ 
\begin{center}
	\begin{tikzcd}
		&m \arrow[r,"\alpha_m"]&1\arrow[rd,"\alpha_1"]&\\
		m-1\arrow[ru,"\alpha_{m-1}"]&&&2\arrow[d,"\alpha_2"]\\
		m-2\arrow[u,"\alpha_{m-2}"]
		\arrow[rrr,dotted,-,bend right=90]&&&3
	\end{tikzcd}
\end{center}
by the two-sided ideal generated by the paths of length $n+1$, $n\geq 1$. This basic and self-injective algebra (hence Frobenius) has dimension $m(n+1)$ over the ground field $k$ and it is connected but not separable, see \cite[\S3]{holm_hochschild_1998} and \cite[\S3]{erdmann_twisted_1999}.

\begin{proposition}\label{counter}
	If $k$ is algebraically closed, $\Lambda=N_m^n$, $m$ divides $n$, and $n>1$, then $\Omega^3(\Lambda)$ is not stably isomorphic to an invertible $\Lambda$-bimodule, hence $\proj{\Lambda}$ cannot be endowed with a(n enhanced) triangulated category structure.
\end{proposition}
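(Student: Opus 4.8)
The plan is to extract $\dim_k\Omega^3_{\Lambda^\env}(\Lambda)$ from the minimal projective bimodule resolution of $\Lambda=N_m^n$ and then feed this into Corollary~\ref{main_theorem}. By that corollary, and by Hanihara's criterion recalled in the introduction (\cite{hanihara_auslander_2018}) for the non-enhanced variant, it suffices to prove that $\Omega^3_{\Lambda^\env}(\Lambda)$ is \emph{not} isomorphic in $\modulesst{\Lambda^\env}$ to an invertible $\Lambda$-bimodule.

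First I would recall the start of the minimal projective resolution of $\Lambda=N_m^n$ as a $\Lambda^\env$-module. Write $e_1,\dots,e_m$ for the primitive idempotents (indices modulo $m$) and $S_i$ for the corresponding simple modules, and let $\alpha_i\colon i\to i+1$ be the arrows. The top of $\Lambda$ as a bimodule is $\bigoplus_{i=1}^m S_i\otimes_k S_i$, so its projective cover is $Q_0=\bigoplus_{i=1}^m\Lambda e_i\otimes_k e_i\Lambda$; the first syzygy is generated by the $m$ elements associated with the arrows, whence $Q_1=\bigoplus_{i=1}^m\Lambda e_{i+1}\otimes_k e_i\Lambda$; and since the $m$ paths of length $n+1$ form a minimal generating set of the defining ideal, $Q_2=\bigoplus_{i=1}^m\Lambda e_{i+n+1}\otimes_k e_i\Lambda$ (this is the standard Bardzell-type resolution of a monomial, in particular of a Nakayama, algebra; see \cite{holm_hochschild_1998,erdmann_twisted_1999}). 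Each of these $3m$ indecomposable projective generators has $k$-dimension $(n+1)^2$, so $\dim_k Q_0=\dim_k Q_1=\dim_k Q_2=m(n+1)^2$. As $0\to\Omega^3_{\Lambda^\env}(\Lambda)\to Q_2\to Q_1\to Q_0\to\Lambda\to 0$ is exact, the alternating sum of $k$-dimensions vanishes and
\[\dim_k\Omega^3_{\Lambda^\env}(\Lambda)=\dim_k Q_2-\dim_k Q_1+\dim_k Q_0-\dim_k\Lambda=m(n+1)^2-m(n+1)=mn(n+1).\]

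It remains to convert this into the non-isomorphism. Since $\Lambda$ is Frobenius, so is $\Lambda^\env$ \cite{bergh_tate-hochschild_2013}; hence $\Lambda^\env$ is self-injective and a syzygy taken in a minimal projective resolution has no nonzero projective direct summand (a projective, hence injective, submodule of the radical of a projective module is zero). On the other hand $\Lambda=N_m^n$ is connected and not separable, so---exactly as in the proof of Proposition~\ref{irreducible}---no invertible $\Lambda$-bimodule is projective. Thus, were $\Omega^3_{\Lambda^\env}(\Lambda)$ stably isomorphic to an invertible bimodule $I$, the Krull--Remak--Schmidt property of $\modules{\Lambda^\env}$ would yield an actual isomorphism $\Omega^3_{\Lambda^\env}(\Lambda)\cong I$, and comparing $k$-dimensions would force $mn(n+1)=\dim_k I=\dim_k\Lambda=m(n+1)$, i.e.\ $n=1$, against the hypothesis $n>1$. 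Hence $\Omega^3_{\Lambda^\env}(\Lambda)$ is not stably isomorphic to an invertible $\Lambda$-bimodule, and Corollary~\ref{main_theorem} (resp.\ \cite{hanihara_auslander_2018}) shows that $\proj{\Lambda}$ carries no enhanced (resp.\ ordinary) triangulated structure.

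The one point that needs care is the identification of $Q_0,Q_1,Q_2$, equivalently the equalities $\dim_k Q_j=m(n+1)^2$ for $j\le 2$, which I would either read off from the tops of the first two syzygies as above or simply quote from \cite{holm_hochschild_1998,erdmann_twisted_1999}; the rest is the bookkeeping displayed above. The hypothesis $n>1$ enters decisively, whereas the divisibility $m\mid n$ is used only to keep $\Lambda$ within the range of the algebras treated explicitly there, so the same argument in fact covers every self-injective Nakayama algebra $N_m^n$ with $n>1$.
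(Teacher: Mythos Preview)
Your proof is correct and takes essentially the same route as the paper: a $k$-dimension count via the minimal projective bimodule resolution of $N_m^n$ from \cite{holm_hochschild_1998,erdmann_twisted_1999}, combined with the upgrade from stable to genuine isomorphism exactly as in the proof of Proposition~\ref{irreducible}. The paper's version passes through the intermediate fact $\Omega^2_{\Lambda^\env}(\Lambda)\cong{}_{\sigma}\Lambda_1$ (this is where the hypothesis $m\mid n$ enters, via \cite{holm_hochschild_1998}) and one further syzygy step, whereas you compute $\dim_k\Omega^3_{\Lambda^\env}(\Lambda)=mn(n+1)$ directly from the alternating sum; the numerics agree and both force $n=1$.
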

	
\begin{proof}
	In this proof we compute syzygies by using projective covers. By \cite[\S3]{holm_hochschild_1998}, $\Omega^2(\Lambda)={}_{\sigma}\Lambda_1$ for some automorphism $\sigma$ of $\Lambda$. Since $\Lambda$ is connected and not separable, if $\Omega^3(\Lambda)$ were stably isomorphic to an invertible $\Lambda$-bimodule then it would actually be isomorphic to it, by the same argument as in the proof of Proposition \ref{irreducible} above. Recall from \cite[Proposition 3.8]{bolla_isomorphisms_1984} that any invertible $\Lambda$-bimodule is of the form ${}_\mu\Lambda_1$ for some automorphism $\mu$. If that happened, the minimal projective resolution of $\Lambda$ as a bimodule would produce a short exact sequence of bimodules
	\[{}_\mu\Lambda_1\hookrightarrow P\twoheadrightarrow {}_\sigma\Lambda_1\]
	with projective middle term. If we twisted by $\sigma^{-1}$ from the left we would obtain another short exact sequence of bimodules with projective middle term
	\[{}_{\sigma^{-1}\mu}\Lambda_1\hookrightarrow {}_{\sigma^{-1}}P_1\twoheadrightarrow \Lambda,\]
	hence 
	$\Omega(\Lambda)={}_{\sigma^{-1}\mu}\Lambda_1$. We know that $\dim_k{}_{\sigma^{-1}}P_1=\dim_kP=m(n+1)^2$, see \cite[\S3]{holm_hochschild_1998}, and $\dim_k{}_{\sigma^{-1}\mu}\Lambda_1=\dim_k\Lambda=m(n+1)$, so this can only happen if $n=1$.
	
	The final conclusion follows from (1) in our main theorem in the enhanced case and from \cite[Theorem 1.2]{hanihara_auslander_2018} in the non-enhanced case.
\end{proof}

For $n=1$, $\Lambda=N_m^1$ satisfies $\Omega(\Lambda)={}_{\sigma}\Lambda_1$ for some automorphism $\sigma$ of $\Lambda$, see the proof of \cite[4.2]{erdmann_twisted_1999}. Hence $\Omega^3(\Lambda)={}_{\sigma^3}\Lambda_1$, so in this case the Nakayama algebra $\Lambda$ does satisfy the assumptions of Corollary \ref{main_theorem}.

% ----------------------------------------------------------------
%\bibliographystyle{amsalpha}
%\bibliography{models}

\providecommand{\bysame}{\leavevmode\hbox to3em{\hrulefill}\thinspace}
\providecommand{\MR}{\relax\ifhmode\unskip\space\fi MR }
% \MRhref is called by the amsart/book/proc definition of \MR.
\providecommand{\MRhref}[2]{%
  \href{http://www.ams.org/mathscinet-getitem?mr=#1}{#2}
}
\providecommand{\href}[2]{#2}

\end{document}